\DeclareSymbolFont{cyrletters}{OT2}{wncyr}{m}{n}
\DeclareMathSymbol{\Zhe}{\mathalpha}{cyrletters}{"11} 
\begin{document}
\title[Non-commutative main conjecture for graphs]{On the non-commutative Iwasawa main conjecture for voltage covers of graphs}
\author[S.~Kleine]{Sören Kleine} 
\address[Kleine]{Institut für Theoretische Informatik, Mathematik und Operations Research, Universität der Bundeswehr München, Werner-Heisenberg-Weg 39, 85577 Neubiberg, Germany} 
\email{soeren.kleine@unibw.de} 
\author[K.~Müller]{Katharina Müller}
\address[Müller]{D\'epartement de Math\'ematiques et de Statistique\\
Universit\'e Laval, Pavillion Alexandre-Vachon\\
1045 Avenue de la M\'edecine\\
Qu\'ebec, QC\\
Canada G1V 0A6}
\email{katharina.mueller.1@ulaval.ca}

\subjclass[2020]{Primary: 11C30, 11R23; Secondary: 05C25, 05C40, 05C50, 11C20} 
\keywords{Voltage cover of a graph, (non-commutative) Iwasawa modules, Iwasawa main conjecture, $\mathfrak{M}_H(G)$-property}

\newcommand{\R}{\mathds{R}}
 	\newcommand{\Z}{\mathds{Z}}
 	\newcommand{\N}{\mathds{N}}
 	\newcommand{\Q}{\mathds{Q}}
 	\newcommand{\K}{\mathds{K}}
 	\newcommand{\M}{\mathds{M}} 
 	\newcommand{\C}{\mathds{C}}
 	\newcommand{\B}{\mathds{B}}
 	\newcommand{\LL}{\mathds{L}}
 	\newcommand{\F}{\mathds{F}}
 	\newcommand{\p}{\mathfrak{p}} 
 	\newcommand{\q}{\mathfrak{q}} 
 	\newcommand{\f}{\mathfrak{f}} 
 	\newcommand{\Pot}{\mathcal{P}}
 	\newcommand{\Gal}{\textup{Gal}}
 	\newcommand{\rg}{\textup{rank}}
 	\newcommand{\id}{\textup{id}}
 	\newcommand{\Ker}{\textup{Ker}}
 	\newcommand{\Image}{\textup{Im}} 
 	\newcommand{\la}{\langle}
 	\newcommand{\ra}{\rangle}
 	\newcommand{\gdw}{\Leftrightarrow}
 	\newcommand{\pfrac}[2]{\genfrac{(}{)}{}{}{#1}{#2}}
 	\newcommand{\Ok}{\mathcal{O}}
 	\newcommand{\Norm}{\mathrm{N}} 
 	\newcommand{\coker}{\mathrm{coker}}
 	\newcommand{\dotcup}{\stackrel{\textstyle .}{\bigcup}}
 	\newcommand{\Cl}{\mathrm{Cl}}
 	\newcommand{\Sel}{\textup{Sel}} 
 	\newcommand{\OkG}{\Ok\llbracket G\rrbracket }
 	\newcommand{\Ann}{\textup{Ann}} 
    \newcommand{\Div}{\textup{Div}}
    \newcommand{\on}{\textup{ on }}
    \newcommand{\Nrd}{\textup{Nrd}} 

 	\newtheorem{lemma}{Lemma}[section] 
 	\newtheorem{prop}[lemma]{Proposition} 
 	\newtheorem{defprop}[lemma]{Definition and Proposition} 
 	\newtheorem{conjecture}[lemma]{Conjecture} 
 	\newtheorem{thm}[lemma]{Theorem} 
 	\newtheorem*{thm*}{Theorem} 
 	\newtheorem{corollary}[lemma]{Corollary}
 	\newtheorem{claim}[lemma]{Claim}

 	\theoremstyle{definition}
 	\newtheorem{def1}[lemma]{Definition} 
 	\newtheorem{ass}[lemma]{Assumption}
 	\newtheorem{rem}[lemma]{Remark} 
 	\newtheorem{rems}[lemma]{Remarks} 
 	\newtheorem{example}[lemma]{Example} 
 	\newtheorem{fact}[lemma]{Fact} 

\begin{abstract} 
Let $p$ be a rational prime, and let $X$ be a connected finite graph. In this article we study voltage covers $X_\infty$ of $X$ attached to a voltage assignment ${\alpha}$ which takes values in some uniform $p$-adic Lie group $G$. We formulate and prove an Iwasawa main conjecture for the projective limit of the Picard groups $\textup{Pic}(X_n)$ of the intermediate voltage covers $X_n$, ${n \in \N}$, and we prove one inclusion of a main conjecture for the projective limit of the Jacobians $J(X_n)$.  

Moreover, we study the $\mathfrak{M}_H(G)$-property of $\Z_p\llbracket G\rrbracket $-modules and prove a necessary condition for this property which involves the $\mu$-invariants of $\Z_p$-subcovers ${Y \subseteq X_\infty}$ of $X$. If the dimension of $G$ is equal to 2, then this condition is also sufficient. 
\end{abstract} 

\maketitle 
\section{Introduction} 
In Iwasawa theory one studies the asymptotic growth of arithmetic objects (such as ideal class groups, Selmer groups, Tate-Shafarevic groups etc.) in suitable infinite pro-$p$ extensions of (number) fields. The Galois group of such an extension is usually a compact pro-$p$, $p$-adic Lie group $G$. For example, if $K_\infty$ denotes a so-called \emph{$\Z_p$-extension} of a number field $K$ (this means that $K_\infty$ is a Galois extension of $K$ and ${\Gal(K_\infty/K)}$ is isomorphic to $\Z_p$, the additive group of $p$-adic integers), then for each ${n \in \N}$ there exists a unique intermediate number field ${K_n \subseteq K_\infty}$ of degree $p^n$ over $K$, and Iwasawa has shown in \cite{iwasawa} that the exponent $e_n$ of the maximal $p$-power dividing the class number of $K_n$ grows via an asymptotic formula of the following form: 
\begin{align} \label{eq:iwasawa} e_n = \mu p^n + \lambda n + \nu\end{align}  
for each sufficiently large $n$ and fixed constants ${\mu, \lambda, \nu \in \Z}$. 
The major insight of Iwasawa, which enabled him to prove this famous result, was that the investigation of the ideal class groups of the $K_n$ becomes easier if one combines all of them to a so-called Iwasawa module over the completed group ring $\Z_p\llbracket G\rrbracket $, ${G = \Gal(K_\infty/K)}$ (i.e. by considering the projective limit of the ideal class groups over all intermediate number fields of the extension). As the structure of modules over such group rings is fairly well understood this enabled Iwasawa to derive equation~\eqref{eq:iwasawa}.

In this paper we will understand by an \emph{Iwasawa module} any finitely generated module $M$ over a completed group ring ${\Lambda := \Z_p\llbracket G\rrbracket }$, with $G$ as above (see Section~\ref{section:notation} for a more precise definition). One of the most prominent aims in Iwasawa theory is the formulation and proof of suitable \emph{Iwasawa main conjectures}. These conjectures describe a deep connection between the arithmetic-algebraic Iwasawa modules on the one hand and analytical objects (namely, the special values of suitable $p$-adic $L$-functions) on the other hand. The first such Iwasawa main conjecture was proved by Mazur and Wiles in \cite{mazur-wiles}. Over the last decades much research focussed on proving main conjectures in more general contexts. To name just a few results we refer to the groundbreaking papers \cite{skinner-urban}, \cite{ritter-weiss} and \cite{kakde}. 

A few years ago, Vallières initiated the Iwasawa theory \emph{of graphs} (see \cite{vallieres1}). Let $X_\infty$ be a $\Z_p$-cover of a finite connected graph $X$, i.e. $X_\infty$ is the "limit" of a sequence of Galois covers $X_n$ of $X$ such that ${\Gal(X_n/X) \cong \Z/p^n\Z}$ for each $n$. It turns out that the $p$-part of the number of \emph{spanning trees} of $X_n$ grows asymptotically in a way which is analogous to Iwasawa's growth formula~\eqref{eq:iwasawa}, i.e. it can again be described completely in terms of some invariants $\mu$, $\lambda$ and $\nu$ attached to the $\Z_p$-cover $X_\infty/X$. The proof of this result, which was given in increasing generality in a series of papers of Vallières and McGown (see \cite{vallieres1, vallieres2, vallieres3}), was completely analytic, in the sense that it was based on a direct relation of the number of spanning trees to certain special values of Artin-Ihara $L$-functions, which can be computed explicitly in this setting. In \cite{dubose-vallieres} the above approach was generalised to $\Z_p^l$-covers of graphs. 

In \cite{gonet} Gonet proved an analogous growth formula for $\Z_p$-covers of graphs via algebraic means. To be more precise, Gonet introduced suitable Iwasawa modules and studied their structure as $\Lambda$-modules in order to derive the growth formula. Therefore Gonet's proof was very much in the flavour of Iwasawa's classical approach. In \cite{Kleine-Mueller4} we generalised Gonet's approach and studied more generally $\Z_p^l$-covers. One of the most important properties of the corresponding Iwasawa modules is that they do not contain any non-trivial \emph{pseudo-null} $\Lambda$-submodules (see Section~\ref{section:notation} for the precise definition). The absence of pseudo-null submodules did not only simplify the investigation of the asymptotic growth of the number of spanning trees, but it also allowed us to formulate and prove an Iwasawa main conjecture for $\Z_p^l$-covers of graphs in a rather straight-forward way (see \cite[Theorem~5.2]{Kleine-Mueller4}). 

The main goal of the present article is to derive Iwasawa main conjectures in a much more general setting. We study voltage covers $X_\infty/X$ such that the corresponding voltage assignment takes values in an arbitrary uniform pro-$p$ group $G$ (for the detailed definitions we refer to Section~\ref{section:notation}). Note that any compact $p$-adic Lie group contains an open normal subgroup which is uniform. Therefore this class of voltage covers is fairly general. 

To any finite Galois cover $Y/X$ and a $\Gal(Y/X)$-representation $\rho$ one can attach a complex-analytic Artin-Ihara $L$-function $L(\rho, Y/X, u)$. For some exponent $d$ one can write $L(\rho, Y/X, u)^{-1} = (1-u^2)^d \cdot h(\rho, X/Y,u)$, where we understand the function $h(\rho, Y/X, u)$, which does not vanish at ${u = 1}$, as the \lq algebraic part' of the $L$-function. Now suppose that $Y$ is the derived graph of a voltage assignment $\alpha$. We will prove in Section~\ref{section:Ihara-L-functions} that the special value at ${u = 1}$ of this algebraic part satisfies 
\begin{align} \label{eq:algebraicpart} h(\rho, Y/X, 1) = \det(\rho(D - A_\alpha^t)), \end{align} 
where $D$ and $A_\alpha^t$ denote the degree matrix and the transpose of the adjacency matrix attached to $Y/X$ (see Corollary~\ref{cor:interpolation}). As $\Z_p\llbracket G\rrbracket$ is non-abelian we cannot apply the determinant to $D-A_\alpha^t$ directly. But we can define a reduced norm on $\Z_p\llbracket G\rrbracket$ with the property that \begin{equation}
\label{eq:intro:det}
    \det(\rho(D-A_\alpha^t))I_d=\rho(\textup{Nrd}(D-A_\alpha^t)), 
\end{equation} where $I_d$ is the ${d \times d}$ identity matrix and $d=\dim(\rho)$ (see Lemma~\ref{lemma-interpolation}). In our formulation of the main conjecture $\textup{Nrd}(D-A_\alpha^t)$ will play the role of the $p$-adic $L$-function. 

Now let $X_\infty$ be the derived graph of a voltage cover $\alpha$ which takes values in a uniform pro-$p$ group $G$. Let $X_n$, ${n \in \N}$, be the intermediate finite voltage covers of $X$ such that ${\Gal(X_n/X) \cong G/G^{p^n}}$ for each $n$. Let $\textup{Pic}_\infty$ be the projective limit of the Picard groups $\textup{Pic}(X_n)$ (see also Lemma~\ref{tensor-proj-lim}). Then $\textup{Pic}_\infty$ is a finitely generated and torsion $\Lambda$-module, where ${\Lambda = \Z_p\llbracket G\rrbracket }$ denotes the Iwasawa algebra. In classical commutative Iwasawa theory a main conjecture typically describes a generator of the characteristic ideal of an Iwasawa module in terms of special values of $p$-adic analytic $L$-functions. Since $\Lambda$ is in general non-commutative in the present paper, we instead use the language of \emph{Fitting invariants}, which we recall in Section~\ref{section:Fitting}. The notion of Fitting invariants in the sense we are using here was introduced by Nickel (see \cite{nickel-definition} and \cite{nickel}) and was first applied to Iwasawa modules coming from ideal class groups in non-abelian CM extensions. The main conjecture which we prove in Section~\ref{section:main_conjecture} then takes the following form. 
\begin{thm} \label{thm:main_conjecture} 
   In the above setting, suppose that $X_n$ is connected for each ${n \in \N}$. Then 
   \[ \textup{Fitt}_\Lambda(\textup{Pic}_\infty) = \langle \textup{Nrd}(D - A_\alpha^t)\rangle_{I(\Lambda)}, \]
   where the equality is one of $I(\Lambda)$-ideals. 
\end{thm}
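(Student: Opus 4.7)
The plan is to establish a finite free square presentation
\[ \Lambda^r \xrightarrow{D - A_\alpha^t} \Lambda^r \to \textup{Pic}_\infty \to 0 \]
with $r = |V(X)|$, and then to read off the Fitting invariant directly from Nickel's definition recalled in Section~\ref{section:Fitting}. For a $\Lambda$-module admitting a square presentation matrix $M$, this definition gives $\textup{Fitt}_\Lambda = \langle \textup{Nrd}(M)\rangle_{I(\Lambda)}$, so once such a presentation by $D - A_\alpha^t$ is in place, the theorem follows at once.

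To produce the presentation, I would work first at finite level. For a connected finite graph $Y$ the Picard group is canonically isomorphic to the cokernel of the Laplacian $D - A^t$ acting on $\Z^{V(Y)}$; after $p$-adic completion this yields an exact sequence
\[ \Lambda_n^r \to \Lambda_n^r \to \textup{Pic}(X_n) \otimes_\Z \Z_p \to 0 \]
at each level $n$, where $\Lambda_n := \Z_p[G/G^{p^n}]$ and the presentation matrix is the image of $D - A_\alpha^t$ under the natural projection $\Lambda \to \Lambda_n$. Here one uses the standard identification of the voltage Laplacian of $X_n$ with this reduction. Taking the inverse limit in $n$ and invoking Lemma~\ref{tensor-proj-lim} to commute inverse limits with cokernels yields the desired $\Lambda$-linear presentation.

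The main obstacle will be controlling the passage to the inverse limit. The $p$-adically completed Picard group $\textup{Pic}(X_n) \otimes_\Z \Z_p$ carries a free $\Z_p$-summand of rank one coming from the degree map, and the connectedness hypothesis on every $X_n$ is precisely what keeps this rank uniformly equal to one, so that the inverse system is well-behaved. One must then verify the Mittag-Leffler condition for the transition system — which should follow because principal divisors on a subcover push forward to principal divisors on the larger covers, making the transition maps $\textup{Pic}(X_{n+1}) \otimes \Z_p \to \textup{Pic}(X_n) \otimes \Z_p$ surjective — and then carefully match the resulting $\Lambda$-module structure on the limit with the free presentation coming from $D - A_\alpha^t$, so that Nickel's construction can be applied.
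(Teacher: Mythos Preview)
Your proposal is correct and shares the paper's core idea: exhibit a quadratic presentation of $\textup{Pic}_\infty$ with matrix $D - A_\alpha^t$ and then invoke the fact (Remark~\ref{rem:dependance} at finite level, together with Lemma~\ref{quadratical represented}) that for a quadratically presented module the Fitting invariant is generated by the reduced norm of any square presentation matrix.

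The only difference is in how the presentation is obtained. You propose to build it by taking the inverse limit of the finite-level presentations $\Lambda_n^r \xrightarrow{\pi_n(D-A_\alpha^t)} \Lambda_n^r \to \textup{Pic}(X_n)\otimes\Z_p \to 0$, which forces you to verify Mittag--Leffler for the transition maps. The paper instead uses the short exact sequence
\[ 0 \longrightarrow \textup{Pr}_\Lambda \longrightarrow \textup{Div}_\Lambda \longrightarrow \textup{Pic}_\infty \longrightarrow 0 \]
already established directly at the $\Lambda$-level in Corollary~\ref{cor:J_infty} (relying on the freeness of $\textup{Pr}_\Lambda$ from Lemma~\ref{lem:free}). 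Since $\textup{Pr}_\Lambda$ is freely generated by the principal divisors $p_i$, whose expression in the basis $v_1,\dots,v_m$ of $\textup{Div}_\Lambda$ is precisely the matrix $D - A_\alpha^t$, the quadratic presentation is immediate and no limit argument is needed. Your route works, but the detour through finite levels and Mittag--Leffler is avoidable given what Section~\ref{section:algebraic_theory} already provides.
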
 
Here the left hand side is the Fitting invariant of the Iwasawa module $\textup{Pic}_\infty$, $\textup{Nrd}$ denotes the \emph{reduced norm} (introduced in Sections~\ref{section:Ihara-L-functions} and \ref{section:Fitting}), and $I(\Lambda)$ is a certain local ring which we define in Section~\ref{section:Fitting}. The proof uses once again equation~\ref{eq:intro:det}. In view of equation~\eqref{eq:algebraicpart} this provides us with a link to the algebraic part of the Ihara $L$-function. Moreover, the Fitting invariant of $\textup{Pic}_\infty$ also contains information on the global annihilators of the $\Lambda$-module $\textup{Pic}_\infty$. 

At the end of Section~\ref{section:main_conjecture} we also prove one inclusion of a main conjecture for the Iwasawa module $J_\infty$ which is the projective limit of the \emph{Jacobians} of the $X_n$. Both main conjectures are closely related; in fact, in the abelian case we are able to derive a full main conjecture for the Jacobians. The main reason for this is that in the abelian case we formulate the Iwasawa main conjecture in terms of characteristic ideals. If $G$ is abelian and the dimension of $G$ is at least $2$, then the two $\Lambda$-modules $J_\infty$ and $\textup{Pic}_\infty$ have the same characteristic ideal. 

Iwasawa main conjectures in non-commutative Iwasawa theory for number fields are usually formulated in the language of algebraic $K$-theory (see e.g. \cite{ritter-weiss, kakde}). In order to make this work, one restricts (see \cite[Section~3]{5people}) to those Iwasawa-modules which satisfy a certain \emph{$\mathfrak{M}_H(G)$-property} (see Definition~\ref{def:inv}(v)). We reformulate Theorem~\ref{thm:main_conjecture} in the language of algebraic $K$-theory in Section~\ref{section:K-theory}. The $K$-theoretic formulation is slightly stronger as the one given in Theorem~\ref{thm:main_conjecture} but we can only prove it under the assumption that the $\mathfrak{M}_H(G)$-property holds.  We study the $\mathfrak{M}_H(G)$-property of $\Lambda$-modules in some detail in the last section, proving a sufficient condition for the validity of this property and working out some concrete numerical examples. 

\vspace{2mm} 
Let us finally summarise the \emph{structure of the article}. In Section~\ref{section:notation} we introduce the necessary background from both the theory of Iwasawa modules in a non-commutative setting on the one hand and the theory of graphs on the other hand (in particular we briefly describe the language of voltage covers). The next Section~\ref{section:algebraic_theory} contains results on the algebraic structure of the Iwasawa modules which are studied in this paper. Most importantly, we prove that these Iwasawa modules do not contain any non-trivial pseudo-null submodules (see Theorem~\ref{thm:no-pseudo-null}), a crucial fact which will simplify a lot of the arguments in the sequel. 

In Section~\ref{section:Ihara-L-functions} we recall the Artin-Ihara $L$-functions, and we describe special values of the algebraic part of these $L$-functions in terms of determinants and reduced norms. Section~\ref{section:Fitting} contains a collection of definitions and basic results on Fitting invariants and some auxiliary results on properties of these invariants which will be needed for the formulation and proof of a main conjecture in Section~\ref{section:main_conjecture}. In Section~\ref{section:K-theory} we elaborate on a reformulation of the main conjecture in the terms of algebraic $K$-theory. The final section contains a proof of a sufficient criterion for the validity of the $\mathfrak{M}_H(G)$-property. We end by describing several concrete numerical examples to which the criterion can be applied. 

\textbf{Acknowledgements.} The authors would like to thank Antonio Lei and Andreas Nickel for helpful remarks on an earlier version of this article. 

\section{Background} \label{section:notation} 
\subsection{Uniform groups, group rings and Iwasawa modules} 
Let $p$ be a fixed rational prime. By a well-known result of Lazard (see \cite[Corollary~8.34]{Dixon}) any compact $p$-adic Lie group contains an open normal subgroup which is \emph{uniform}. Therefore we will restrict to uniform groups $G$ in this article. 

A profinite pro-$p$-group $G$ which is topologically generated by $l$ elements is called \emph{uniform of dimension $l$} if we have a filtration 
\[ G \supseteq G^p \supseteq G^{p^2} \supseteq \cdots \supseteq G^{p^n} \supseteq \cdots \]
such that $G^{p^{n+1}} \subseteq G^{p^n}$ is normal and ${G^{p^n}/G^{p^{n+1}} \cong (\Z/p\Z)^l}$ for each ${n \in \N}$ (see \cite[Definition~1.15, Lemma~2.4 and Theorem~3.6]{Dixon}). 

Although in general not commutative, uniform $p$-groups share many nice properties. For example, any uniform $p$-group $G$ is torsion-free by \cite[Theorem~4.5]{Dixon}, and ${G_n := G^{p^n}}$ are exactly the groups in the lower central series of $G$. In the following, we let ${G^{(n)} = G/G_n}$ for every ${n \in \N}$ (this makes sense since ${G_n \subseteq G}$ are normal subgroups). 

Moreover, the cohomological dimension of $G$ is equal to its dimension $l$ by results of Lazard and Serre (see \cite{lazard} and \cite[Corollaire~1]{serre}). 

Now we define several group rings, which will play a major role in our article. To this purpose, fix a uniform $p$-group $G$ of dimension $l$, and let ${R = \Z_p[G]}$ and ${\Lambda = \Z_p\llbracket G\rrbracket }$. For example, if ${G \cong \Z_p^l}$, then the completed group ring $\Lambda$ is non-canonically isomorphic to the ring $\Z_p\llbracket T_1, \ldots, T_l\rrbracket $ of power series in $l$ variables over $\Z_p$ (the isomorphism depends on the choice of a set of topological generators of $\Z_p^l$). For any ${n \in \N}$ we can also consider the group ring $\Z_p[G^{(n)}]$ of the finite quotient group $G^{(n)}$, and we have natural surjective maps 
\[ R \longrightarrow \Z_p[G^{(n)}], \quad \Lambda \longrightarrow \Z_p[G^{(n)}]. \] 
Let $I_n$ be the kernel of the latter map, ${n \in \N}$. 

Since $G$ is a pro-$p$-group without $p$-torsion, the completed group ring $\Lambda$ is a Noetherian ring without zero divisors by \cite[Corollary~7.25]{Dixon}. Moreover, $\Lambda$ is Auslander regular by \cite[Theorem~3.26]{venjakob}, and it is a local ring with finite global dimension by \cite{brumer}. In this article, we will call $\Lambda$ the \emph{Iwasawa algebra} of $G$, and by an \emph{Iwasawa module} we will mean any finitely generated $\Lambda$-module (here and in what follows, we always mean \emph{left} $\Lambda$-modules). In the following, we describe the basic properties and invariants of such an Iwasawa module $M$ which we are interested in. 

\begin{def1} \label{def:inv} Let $M$ be an Iwasawa module. \begin{compactenum}[(i)] 
  \item Letting $\mathcal{Q}(G)$ be the skew field of fractions of $\Lambda$ (which exists by \cite[Corollary~7.25]{Dixon} and \cite[Chapter~10]{goodearl-warfield}), we define the $\Lambda$-rank of $M$ to be 
\[ \textup{rank}_\Lambda(M) = \dim_{\mathcal{Q}(G)}(\mathcal{Q}(G) \otimes_\Lambda M). \] 
\item An Iwasawa module is called \emph{torsion} if ${\textup{rank}_\Lambda(M) = 0}$, and a torsion Iwasawa module is called \emph{pseudo-null} if ${\textup{Ext}^1_{\Lambda}(M, \Lambda) = 0}$. 
\item Let $N$ be another Iwasawa module. Then $M$ is called \emph{pseudo-isomorphic to} $N$ if there exists a \emph{pseudo-isomorphism} ${\varphi \colon M \longrightarrow N}$, i.e. a $\Lambda$-module homomorphism with pseudo-null kernel and cokernel. In this case we write ${M \sim N}$. 
\item Let $M[p^\infty]$ denote the $p$-torsion submodule of $M$, and let $\F_p$ be the finite field with $p$ elements. Following Howson (see \cite[equation~33)]{howson}), we define the \emph{$\mu$-invariant} of $M$ as 
\[ \mu_\Lambda(M) = \sum_{i \ge 0} \rg_{\F_p\llbracket G\rrbracket }(p^i M[p^\infty]/p^{i+1}). \] 
Note that this sum is finite since $\Lambda$ is Noetherian and therefore ${M[p^\infty] \subseteq M}$ is a finitely generated $\Lambda$-module, i.e. ${p^i M[p^\infty] = 0}$ for sufficiently large $i$. 
\item Suppose that ${H \subseteq G}$ denotes a normal subgroup such that ${G/H \cong \Z_p}$. Then we say that $M$ \emph{satisfies the $\mathfrak{M}_H(G)$-property} if $M/M[p^\infty]$ is finitely generated as a module over the group ring $\Z_p\llbracket H\rrbracket $. 
\end{compactenum} 
\end{def1} 
It follows from \cite[Proposition~3.34]{venjakob} that ${M \sim N}$ for two Iwasawa modules $M$ and $N$ implies that ${\mu_\Lambda(M) = \mu_\Lambda(N)}$. It is obvious that also the ranks of $M$ and $N$ are equal in this case. 
\begin{rem} 
  The last property~(v) from Definition~\ref{def:inv}, however, is \emph{not} invariant under pseudo-isomorphisms. For example, consider the case ${G \cong \Z_p^3}$. We identify $$\Lambda = \Z_p\llbracket T_1, T_2, T_3\rrbracket , \quad \Z_p\llbracket H\rrbracket = \Z_p\llbracket T_1, T_2\rrbracket .$$ 
  The $\Lambda$-module ${M = 0}$ obviously satisfies the $\mathfrak{M}_H(G)$-property. Since ${N = \Lambda/(T_1, T_2)}$ is pseudo-null over $\Z_p\llbracket T_1, T_2, T_3\rrbracket $, the natural inclusion ${M \longrightarrow N}$ is a pseudo-isomorphism. But ${N/N[p^\infty] \cong N}$ is not finitely generated as a $\Z_p\llbracket T_1, T_2\rrbracket $-module. 
\end{rem} 
Let us make one more remark concerning the last part of the above definition. It has been shown by Kundu and Lim (see \cite[Lemma~3.1]{kundu-lim}) that in order to have ${G/H \cong \Z_p}$, $H$ must be a uniform group, too. Therefore $\Z_p\llbracket H\rrbracket $ is a Noetherian ring without zero divisors. 

In the next two lemmas we collect useful facts concerning $\mu$-invariants and the $\mathfrak{M}_H(G)$-property. 
\begin{lemma} \label{lemma:mu} 
  Let $G$ be a compact pro-$p$, $p$-adic Lie group such that ${G[p^\infty]= \{0\}}$. Then the following statements hold. \begin{compactenum}[(a)] 
  \item For every Iwasawa module $M$ we have 
  \[ \mu_\Lambda(M) = \sum_{i \ge 0} (-1)^i \textup{ord}_p(H_i(G, M[p^\infty])). \] 
  \item Let $H$ be a closed normal subgroup of $G$ such that ${\Gamma := G/H}$ is isomorphic to $\Z_p$. Then for every Iwasawa module $M$ we have 
  \[ \mu_\Lambda(M) = \sum_{i \ge 0} (-1)^i \mu_\Gamma(H_i(H, M[p^\infty])). \]
  \item Let ${H \subseteq G}$ be a closed subgroup such that ${G/H \cong \Z_p}$. Then any Iwasawa module $M$ which is finitely generated as a $\Z_p\llbracket H\rrbracket $-module has $\mu_\Lambda$-invariant zero as a $\Lambda$-module. 
  \end{compactenum} 
\end{lemma}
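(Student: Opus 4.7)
The plan is to prove (a) by reducing to finitely generated $\F_p\llbracket G\rrbracket$-modules, for which an Euler characteristic identity is established from a finite free resolution; then to derive (b) from (a) via the Hochschild--Serre spectral sequence for $G$ over $H$; and finally to obtain (c) by showing that each homology group ${H_i(H, M[p^\infty])}$ is finitely generated as a $\Z_p$-module, hence has $\mu_\Gamma$-invariant zero.

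For part (a), I would first note that ${M[p^\infty] \subseteq M}$ is a finitely generated $\Lambda$-module (by Noetherianness of $\Lambda$) killed by some $p^N$, so each subquotient ${N_i := p^i M[p^\infty]/p^{i+1} M[p^\infty]}$ is a finitely generated $\F_p\llbracket G\rrbracket$-module. Because $\F_p\llbracket G\rrbracket$ is Noetherian local of finite global dimension $l = \dim G$ (which coincides with the cohomological dimension of $G$), every such $N_i$ admits a finite resolution by finitely generated free $\F_p\llbracket G\rrbracket$-modules. Setting ${\chi(G, N) := \sum_i (-1)^i \dim_{\F_p} H_i(G, N)}$, both $\rg_{\F_p\llbracket G\rrbracket}$ and $\chi(G, -)$ are additive on short exact sequences, and ${\chi(G, \F_p\llbracket G\rrbracket) = 1}$ (since $\F_p\llbracket G\rrbracket$ is $\Lambda$-free), so reading both invariants off the resolution gives ${\chi(G, N_i) = \rg_{\F_p\llbracket G\rrbracket}(N_i)}$. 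Telescoping the long exact homology sequences attached to
\[ 0 \to p^{i+1} M[p^\infty] \to p^i M[p^\infty] \to N_i \to 0 \]
for $i = 0, \ldots, N-1$ then gives ${\sum_i (-1)^i \textup{ord}_p H_i(G, M[p^\infty]) = \sum_i \chi(G, N_i) = \sum_i \rg_{\F_p\llbracket G\rrbracket}(N_i) = \mu_\Lambda(M)}$.

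For part (b), I would apply the Hochschild--Serre spectral sequence
\[ E^2_{r,s} = H_r(\Gamma, H_s(H, M[p^\infty])) \Longrightarrow H_{r+s}(G, M[p^\infty]). \]
Since ${\Gamma \cong \Z_p}$ has cohomological dimension $1$, only the columns $r = 0, 1$ are non-zero, so all differentials vanish and the spectral sequence degenerates into the short exact sequences
\[ 0 \to H_0(\Gamma, H_n(H, M[p^\infty])) \to H_n(G, M[p^\infty]) \to H_1(\Gamma, H_{n-1}(H, M[p^\infty])) \to 0. \]
Taking the alternating sum of $\textup{ord}_p$ in $n$ and re-indexing the $H_1$-terms yields ${\chi(G, M[p^\infty]) = \sum_n (-1)^n [\textup{ord}_p H_0(\Gamma, A_n) - \textup{ord}_p H_1(\Gamma, A_n)]}$ with ${A_n := H_n(H, M[p^\infty])}$. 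Each $A_n$ is already $p$-torsion, so part (a) applied to the one-dimensional group $\Gamma$ identifies the bracketed expression with $\mu_\Gamma(A_n)$; combining this with part (a) applied to $G$ on the left yields (b).

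For part (c), since $M$ is finitely generated over the Noetherian ring $\Z_p\llbracket H\rrbracket$ (which is Noetherian because $H$ is uniform, by the Kundu--Lim citation following Definition~\ref{def:inv}), so is $M[p^\infty]$. As $\Z_p\llbracket H\rrbracket$ is moreover local and of finite global dimension, $M[p^\infty]$ admits a finite resolution ${F_\bullet \to M[p^\infty]}$ by finitely generated free $\Z_p\llbracket H\rrbracket$-modules, and $H_i(H, M[p^\infty])$ is computed as the homology of ${F_\bullet \otimes_{\Z_p\llbracket H\rrbracket} \Z_p}$, a complex whose terms are finitely generated $\Z_p$-modules. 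Therefore each $H_i(H, M[p^\infty])$ is finitely generated over $\Z_p$, hence has $\mu_\Gamma$-invariant zero (its $p^\infty$-torsion part is finite and thus pseudo-null over $\Z_p\llbracket\Gamma\rrbracket$), and substitution into (b) gives $\mu_\Lambda(M) = 0$. The main obstacle throughout is the Euler characteristic identity ${\chi(G, N) = \rg_{\F_p\llbracket G\rrbracket}(N)}$ that drives (a); once this standard consequence of the Noetherian local finite-global-dimension structure of $\F_p\llbracket G\rrbracket$ is in place, parts (b) and (c) amount to careful bookkeeping with the spectral sequence and with finite free resolutions.
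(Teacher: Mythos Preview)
Your argument is correct and essentially reconstructs, in a self-contained way, the results the paper merely cites from Howson and Lim: the Euler-characteristic identity in (a), the Hochschild--Serre degeneration in (b), and the finiteness over $\Z_p$ of the $H_i(H,-)$ in (c). One small slip: in the base case for (a) you justify ${\chi(G,\F_p\llbracket G\rrbracket)=1}$ by saying ``$\F_p\llbracket G\rrbracket$ is $\Lambda$-free'', which is false (it is $\Lambda/p\Lambda$); the correct reason is that $\F_p\llbracket G\rrbracket$ is free over itself, so $\textup{Tor}^{\F_p\llbracket G\rrbracket}_i(\F_p,\F_p\llbracket G\rrbracket)$ vanishes for $i\ge1$, or equivalently one applies $H_*(G,-)$ to ${0\to\Lambda\stackrel{p}{\to}\Lambda\to\F_p\llbracket G\rrbracket\to0}$ and uses that $\Lambda$ is $G$-acyclic. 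With this corrected, your proof stands.
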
 
\begin{proof} 
  Parts (a) and (c) are proved in \cite[Corollary~1.7]{howson} and \cite[Lemma~2.7]{howson}. Assertion (b) follows from (a) and \cite[Lemma~2.2]{Lim_MHG}. 
\end{proof} 
\begin{lemma} \label{lemma:MHG} 
   Let 
  \[ 0 \longrightarrow A \longrightarrow B \stackrel{\alpha}{\longrightarrow} C \longrightarrow 0 \]
   be an exact sequence of Iwasawa modules. Then $B$ has the $\mathfrak{M}_H(G)$-property if and only if both $A$ and $C$ have the $\mathfrak{M}_H(G)$-property. In particular, the following statements hold.  \begin{compactenum}[(a)] 
   \item Suppose that $A$ has the $\mathfrak{M}_H(G)$-property. Then $B$ satisfies the $\mathfrak{M}_H(G)$-property if and only if $C$ satisfies the $\mathfrak{M}_H(G)$-property. 
   \item Suppose that $C$ has the $\mathfrak{M}_H(G)$-property. Then $A$ satisfies the $\mathfrak{M}_H(G)$-property if and only if $B$ satisfies the $\mathfrak{M}_H(G)$-property. 
   \end{compactenum} 
\end{lemma}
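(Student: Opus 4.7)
The plan is to first reformulate the $\mathfrak{M}_H(G)$-property in a way that interacts well with short exact sequences, and then read off both implications using Noetherianity of $R := \Z_p\llbracket H\rrbracket$ together with a straightforward lifting argument.

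The key observation is that for any Iwasawa module $M$, the submodule $M[p^\infty]$ is finitely generated over $\Lambda$ and hence bounded: there exists $N$ with $p^N M[p^\infty] = 0$, in which case $M[p^N] = M[p^\infty]$ and multiplication by $p^N$ yields a $\Lambda$-module isomorphism $M/M[p^\infty] \cong p^N M$. Thus $M$ has the $\mathfrak{M}_H(G)$-property if and only if $p^N M$ is finitely generated over $R$ for every (equivalently, for some) sufficiently large $N$. This reformulation is the workhorse of the proof: it replaces the quotient functor $M \mapsto M/M[p^\infty]$, which is not exact on short exact sequences, by a submodule that behaves well under the inclusion $A \hookrightarrow B$ and the surjection $\alpha \colon B \twoheadrightarrow C$.

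Now I would choose $N$ large enough that $p^N$ simultaneously annihilates $A[p^\infty]$, $B[p^\infty]$ and $C[p^\infty]$. For the forward direction, if $p^{2N} B$ is finitely generated over $R$, then $p^{2N} A \subseteq p^{2N} B$ is a sub-$R$-module (hence finitely generated over $R$ by Noetherianity), and $p^{2N} C = \alpha(p^{2N} B)$ is an $R$-quotient of it; the reformulation then endows both $A$ and $C$ with the $\mathfrak{M}_H(G)$-property. For the reverse direction, fix lifts $a_1, \ldots, a_n \in A$ of $R$-generators of $A/A[p^\infty]$ and $c_1, \ldots, c_m \in C$ of $R$-generators of $C/C[p^\infty]$, and lift each $c_j$ to some $b_j \in B$. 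Given $b \in B$, first approximate $\alpha(b)$ modulo $C[p^\infty]$ by an $R$-combination of the $c_j$; multiplying by $p^N$ kills the $C[p^\infty]$-remainder and lands inside $A$; approximating this element of $A$ modulo $A[p^\infty]$ by an $R$-combination of the $a_i$ and multiplying by a further factor $p^N$ yields an explicit identity
\[ p^{2N} b = \sum_j r_j\, p^{2N} b_j + \sum_i s_i\, p^N a_i \]
with $r_j, s_i \in R$. Hence $p^{2N} B$ is generated over $R$ by the finite set $\{p^{2N} b_j,\, p^N a_i\}$, so $B$ has the $\mathfrak{M}_H(G)$-property. Parts (a) and (b) are immediate corollaries.

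The main obstacle is precisely the non-exactness of $M \mapsto M/M[p^\infty]$ alluded to above: while $A/A[p^\infty] \hookrightarrow B/B[p^\infty]$ is injective and $B/B[p^\infty] \twoheadrightarrow C/C[p^\infty]$ is surjective, the resulting sequence typically fails to be exact in the middle, because elements of $B$ whose image in $C$ is $p^\infty$-torsion need not lie in $A + B[p^\infty]$. The reformulation in terms of $p^N M$ absorbs this obstruction into a uniform power of $p$, which is also what makes the same choice of $N$ control all three modules at once in the lifting argument.
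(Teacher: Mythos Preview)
Your proof is correct. The reformulation $M_f \cong p^N M$ for $N$ large is valid, and both directions go through as you describe: Noetherianity of $R=\Z_p\llbracket H\rrbracket$ handles submodules and quotients in the forward direction, and your explicit lifting in the reverse direction is clean (the key point being that $p$ is central, so $p^N$ commutes with the $r_j$).

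The paper takes a more homological route. Rather than passing to the submodules $p^N M$, it works directly with the quotients $M_f = M/M[p^\infty]$ and repairs the failure of exactness of $M \mapsto M_f$ by two applications of the snake lemma, producing short exact sequences
\[ 0 \longrightarrow A_f \longrightarrow B_f \longrightarrow C/\textup{Im}(\alpha|_{B[p^\infty]}) \longrightarrow 0, \qquad 0 \longrightarrow \textup{Im}(\beta) \longrightarrow C/\textup{Im}(\alpha|_{B[p^\infty]}) \longrightarrow C_f \longrightarrow 0, \]
with $\textup{Im}(\beta)\subseteq A_f$; both implications then follow by closure of finitely generated $R$-modules under submodules, quotients, and extensions. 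What your approach buys is elementarity: no diagram chasing, and the obstruction to exactness is visibly absorbed into a uniform power of $p$. What the paper's approach buys is a slightly more structural picture of exactly where the defect sits (namely in a submodule of $A_f$), which could be useful if one wanted finer information than the bare $\mathfrak{M}_H(G)$-statement. The two arguments are morally the same idea viewed from opposite ends of the isomorphism $p^N M \cong M_f$.
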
 
\begin{proof} 
  It suffices to show that $B$ has the $\mathfrak{M}_H(G)$-property if and only if $A$ and $C$ have this property. To this purpose, choose $m$ large enough such that ${A[p^m] = A[p^\infty]}$, ${B[p^m] = B[p^\infty]}$ and ${C[p^m] = C[p^\infty]}$. Then we have an exact sequence 
  \[ 0 \longrightarrow A[p^\infty] \longrightarrow B[p^\infty] \stackrel{\alpha}{\longrightarrow} C[p^m] \stackrel{\beta}{\longrightarrow} A_f \longrightarrow B_f \longrightarrow C_f \longrightarrow 0. \] 
  Here we denote, by abuse of notation, the restriction of $\alpha$ to ${B[p^\infty] \subseteq B}$ again by $\alpha$. In the following we adopt an argument from the proof of \cite[Lemma~3.3]{Lim_MHG2}. In view of the commutative diagram 
  \[ \xymatrix{& 0 \ar[d] & 0 \ar[d] & 0 \ar[d] & \\ 
  0 \ar[r] & A[p^\infty] \ar[d] \ar[r] & B[p^\infty] \ar[d] \ar[r] & \textup{Im}(\alpha) \ar[d] \ar[r] & 0 \\ 
  0 \ar[r] & A \ar[r] & B \ar^\alpha[r] & C \ar[r] & 0}\] 
  the snake lemma yields an exact sequence 
  \begin{align} \label{eq:MHG}  0 \longrightarrow A_f \longrightarrow B_f \longrightarrow C/\textup{Im}(\alpha) \longrightarrow 0. \end{align} 

  Now consider the following commutative diagram. 
  \[ \xymatrix{& 0 \ar[d] & 0 \ar[d] & \textup{Im}(\beta) \ar[d] &  \\ 
      0 \ar[r] & \textup{Im}(\alpha) \ar[r] \ar[d] & C[p^\infty] \ar[d] \ar[r] & \textup{Im}(\beta) \ar[d] \ar[r] & 0 \\ 
      0 \ar[r] & C \ar[r]^{\textup{id}} & C \ar[r] & 0 & }\]
Then the snake lemma yields an exact sequence 
\begin{align} \label{eq:MHG2} 0 \longrightarrow \textup{Im}(\beta) \longrightarrow C/\textup{Im}(\alpha) \longrightarrow C_f \longrightarrow 0.\end{align}  

Suppose first that $B$ has the $\mathfrak{M}_H(G)$-property. Then it follows from the exact sequence~\eqref{eq:MHG} that $A_f$ is finitely generated over $\Z_p\llbracket H\rrbracket $, i.e. $A$ has the $\mathfrak{M}_H(G)$-property. Moreover, this exact sequence also implies that $C/\textup{Im}(\alpha)$ is finitely generated over $\Z_p\llbracket H\rrbracket $, and therefore the exact sequence~\eqref{eq:MHG2} implies that $C$ has the $\mathfrak{M}_H(G)$-property. 

On the other hand, if $A$ has the $\mathfrak{M}_H(G)$-property then ${\textup{Im}(\beta) \subseteq A_f}$ is a finitely generated $\Z_p\llbracket H\rrbracket $-module, and it follows from the exact sequence~\eqref{eq:MHG2} that (provided that also $C$ has the $\mathfrak{M}_H(G)$-property) also $C/\textup{Im}(\alpha)$ is finitely generated over $\Z_p\llbracket H\rrbracket $. Since $A$ has the $\mathfrak{M}_H(G)$-property, the exact sequence~\eqref{eq:MHG} implies that $B$ has also the $\mathfrak{M}_H(G)$-property. 
\end{proof} 
The investigation of the $\mathfrak{M}_H(G)$-property has been introduced by Coates-Sujatha and it is very important in the context of Iwasawa main conjectures in non-\-commu\-ta\-tive Iwasawa theory (see Section~\ref{section:K-theory} below). 

We conclude with a variant of \emph{Nakayama's Lemma}, which will be useful for our purposes. 
\begin{lemma}\label{nakayama} Let $\mathfrak{m}$ be the maximal ideal of $\Lambda$ and let $N$ be a finitely generated $\Lambda$-module. If $x_1,\dots, x_n$ generate $N/\mathfrak{m}N$ as a $\Lambda/\mathfrak{m}$-module, then $x_1,\dots ,x_n$ generate $N$ as a $\Lambda$-module.
\end{lemma}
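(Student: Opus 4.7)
The plan is to reduce to the standard Nakayama argument for a local ring, taking advantage of the fact that $\Lambda$ is local with residue field $\F_p$. First I would set $N' = \Lambda x_1 + \cdots + \Lambda x_n$ and consider the quotient $M = N/N'$. The hypothesis that the $x_i$ generate $N/\mathfrak{m}N$ translates into $N = N' + \mathfrak{m}N$, so $M = \mathfrak{m}M$; moreover $M$ is finitely generated as a quotient of $N$. The goal is to show $M = 0$.

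To do this I would verify the ring-theoretic fact that every $a\in\mathfrak{m}$ has the property that $1-a$ is a unit in $\Lambda$. Since $\Lambda/\mathfrak{m}\cong \F_p$, an element $b\in\Lambda$ lies outside $\mathfrak{m}$ exactly when its image in the residue field is nonzero. For such $b$ one writes $b=u+m$ with $u\in\Z_p^\times$ and $m\in\mathfrak{m}$; because $\Lambda$ is $\mathfrak{m}$-adically complete (it is the completed group ring of the pro-$p$ group $G$), the geometric series $\sum_{k\ge 0}(-u^{-1}m)^k$ converges and produces a two-sided inverse of $b$. Applied to $b=1-a$ this gives $1-a\in\Lambda^\times$ whenever $a\in\mathfrak{m}$, i.e.\ $\mathfrak{m}$ is contained in the Jacobson radical of $\Lambda$.

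With this in hand I argue by contradiction. Assume $M\neq 0$ and pick a generating set $\{y_1,\dots,y_k\}$ of $M$ of \emph{minimal} cardinality. Since $M=\mathfrak{m}M$, we may write
\[ y_1 = a_1 y_1 + a_2 y_2 + \cdots + a_k y_k \qquad (a_i\in\mathfrak{m}), \]
hence $(1-a_1)y_1 = a_2 y_2 + \cdots + a_k y_k$. Multiplying on the left by the unit $(1-a_1)^{-1}$ shows that $y_1$ lies in the $\Lambda$-submodule generated by $y_2,\dots,y_k$, contradicting minimality. Therefore $M=0$, that is $N=N'$, as required. The only non-routine point is the passage from $\Lambda$ being local to $\mathfrak{m}$ being contained in the Jacobson radical, which is where completeness of the completed group ring enters; once that is established the classical minimal-generating-set argument works verbatim in the non-commutative setting.
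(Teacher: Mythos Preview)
Your proof is correct. The paper itself does not spell out any argument: it simply cites \cite[Lemma~1.1]{venjakob} and notes that $G$ is a pro-$p$ group. Your approach is the classical direct Nakayama argument, made self-contained by checking explicitly that $1-a$ is a unit for each $a\in\mathfrak{m}$ via a convergent geometric series in the $\mathfrak{m}$-adically complete ring $\Lambda$. One small remark: since the paper has already recorded (citing Brumer) that $\Lambda$ is a local ring, the equality of $\mathfrak{m}$ with the Jacobson radical is immediate from the definition of a non-commutative local ring, so the completeness argument---while valid and pleasantly explicit---is more than is strictly needed once one is willing to quote that fact.
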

\begin{proof}
This follows from \cite[Lemma~1.1]{venjakob} since $G$ is a pro-$p$ group.     
\end{proof}

\subsection{Graph theoretical definitions}
Let $X$ be a connected undirected graph. We denote the set of vertices of $X$ by $V(X)$ and the set of edges by $E(X)$. We write $\mathbf{E}(X)$ for the set of directed edges. For each ${v,w \in V(X)}$ we denote by $E(v,w)$ the set of (undirected) edges between $v$ and $w$. In this article, we assume that $E(v,w)$ is finite for each choice of ${v, w \in V(X)}$, and we denote by 
\[ \deg(v) = \sum_{w \in V(X)} |E(v,w)| \]
the degree of $v$. Moreover, we let ${\textup{mult}(v,w) = |E(v,w)|}$. 

We define $\textup{Div}(X)$ to be the free abelian group on $V(X)$, i.e. 
\[ \textup{Div}(X) = \left\{ \sum_{v \in V(X)} a_v v \mid \textup{$a_v \in \Z_p$ for each $v$, $a_v = 0$ for all but finitely many $v$}\right\}, \]
and we let $\textup{Div}^0(X)$ be the subgroup of $\textup{Div}(X)$ defined as 
\[ \textup{Div}^0(X) = \left\{ \sum_{v \in V(X)} a_v v \in \textup{Div}(X) \; \Big| \; \sum_v a_v = 0 \right\}. \] 
Finally, we define the group of principal divisors by 
\[ \textup{Pr}(X) = \{ d \in \textup{Div}^0(X) \mid d \simeq 0 \}, \] 
where $d \simeq 0$ means that there exists a sequence of \emph{firing moves} that transforms $d$ into 0. This means that there exists a finite sequence ${v_1, \ldots, v_s}$ of vertices ${v_i \in V(X)}$ and a sequence of integers ${a_i \in \{\pm 1\}}$ such that 
\[ d = \sum_{i = 1}^s (-1)^{a_i} \left( \sum_{w: (v_i,w) \in E(X)} (v_i - w) \right)\] 
(see also \cite[Definition~1.5]{corry-Perkinson}). 
\begin{def1} 
  We let $\textup{Pic}(X) = \textup{Div}(X)/\textup{Pr}(X)$ and $J(X) = \textup{Div}^0(X)/\textup{Pr}(X)$. 
\end{def1} 
The Picard groups and the Jacobians of suitable families of graphs will be the main objects of interest in this article. 

Now fix a finite graph $X$ and a uniform $p$-group $G$, and consider a sequence of Galois covers 
\[X\longleftarrow X_1 \longleftarrow X_2\longleftarrow \dots\]
of $X$ such that ${\Gal(X_n/X)\cong G^{(n)} = G/G^{p^n}}$ for each ${n \in \N}$ (for a good introduction to the theory of Galois covers we refer to \cite[Chapter~5]{sunada}). 

\emph{In all what follows, we always assume that each intermediate graph $X_n$ is connected.}

We will prove a criterion for this condition for the special class of graphs on which this article focuses at the end of this section (see Lemma~\ref{connected} below). 

An important class of Galois covers of graphs arises via so-called \emph{voltage covers} (for example, any Galois cover $Y/X$ of connected graphs with abelian Galois group arises from such a voltage assignment, see \cite[Section~3]{dubose-vallieres}). Let $S$ be the image of a section of the natural surjective map $E(X)\longrightarrow \mathbf{E}(X)$. Let $\alpha$ be a map (a so-called \emph{voltage assignment}) 
\[\alpha \colon S\longrightarrow G\] and let
\[\alpha_n\colon S\longrightarrow G^{(n)}\] 
be the induced maps. We extend $\alpha$ and $\alpha_n$ naturally to the whole set $\mathbf{E}_X$. We assume that $X_n$ is the derived graph of the voltage assignment $\alpha_n$, by which we mean the following. The vertices of $X_n$ are denoted as tuples $(v,g)$, where $g\in G^{(n)}$ and $v\in V(X)$. There is an edge between $(v,g)$ and $(v',g')$ if and only if there is an edge $e$ from $v$ to $v'$ in $\mathbf{E}_X$ such that $g\alpha_n(e)=g'$. We can embed $V(X)$ into each $V(X_n)$ by writing ${v_i = (v_i, 1)}$ for each $i$. We have a natural action of $G^{(n)}$ on $X_n$ given by
$g(v, g')=(v, gg')$. This induces a well-defined action of $G^{(n)}$ on $\textup{Div}(X_n)$, $J(X_n)$, etc. Let $X_\infty$ be the derived graph associated to $\alpha$. 
Let $\pi_n \colon \textup{Div}(X_\infty)\longrightarrow \textup{Div}(X_n)$ be the natural map. By abuse of notation we also denote the natural projection $\Lambda\longrightarrow \Z_p[G^{(n)}]$ by $\pi_n$. 

Finally, we define the principal divisors 
\[ p_{i,0} := p_i = \deg(v_i) v_0 - \sum_{w : v_i \sim w} \sum_{e_j \in E(v_i,w)} \alpha(e_j) w, \] 
where $v_i \simeq w$ if and only if there exists a finite sequence of firing moves that transforms $v_i$ into $w$ (as in the definition of $\textup{Pr}(X)$). 
If $A$ is an $R$-module we denote by $A_\Lambda$ the $\Lambda$-module $N\otimes_R\Lambda$. For brevity we will write
\[ \textup{Div}_\Lambda := \textup{Div}(X_\infty) \otimes_R \Lambda, \quad \textup{Div}_\Lambda^0 := \textup{Div}^0(X_\infty) \otimes_R \Lambda, \quad \textup{Pr}_\Lambda := \textup{Pr}(X_\infty) \otimes_R \Lambda. \]
We let $M_\Lambda$ be the submodule of $\textup{Div}_\Lambda$ which is generated by the elements ${p_i}$, ${1 \le i \le m}$ (recall ${V(X) = \{v_1, \ldots, v_m\}}$), the elements ${v_i - v_j}$ for ${1 \le j < i \le m}$, and ${I_0 \cdot \textup{Div}_\Lambda}$, where ${I_0 = \ker(\pi_0)}$, as above. 

For a voltage cover ${X_\infty}$ attached to a voltage assignment ${\alpha \colon S \longrightarrow G}$, we can prove a sufficient condition for the $X_n$ to be connected. 
\begin{def1}
Let $c$ be a path in $X$ consisting of the edges $e_1,e_2,\dots, e_s$. We define $\beta(c)=\prod_{i=1}^s\alpha(e_i)$. We call a closed path without backtracks a cycle. 
\end{def1}
The following Lemma is basically a reformulation of one inclusion of \cite[Theorem 4]{diss-gonet}. We nevertheless give a full proof for the convenience of the reader.
\begin{lemma}\label{connected}
Let $G$ be a uniform $p$-adic Lie group of dimension $l$. Assume that $X$ is connected, and that there is a vertex $v_0$ in $X$ and cycles $C_1,\dots,C_l$ passing through $v_0$ such that $\{\beta(C_1),\dots, \beta(C_l)\}$ is a set of topological generators of $G$. Then $X_n$ is connected for all $n$.
\end{lemma}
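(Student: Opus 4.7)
The plan is to use the voltage cover description of $X_n$ in order to reduce the connectedness of $X_n$ to a statement about generation of the group $G^{(n)}$ by the specified cycle voltages.

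First I would fix the base vertex $(v_0,1) \in V(X_n)$ and show that it suffices to connect $(v_0,1)$ to every vertex of the form $(v_0,g)$ with $g \in G^{(n)}$. Indeed, since $X$ is connected, for any $v \in V(X)$ there exists a path from $v_0$ to $v$ in $X$; starting from $(v_0,1)$ in $X_n$ and following the prescription of the derived graph (at each edge $e$ the endpoint's group coordinate is right-multiplied by $\alpha_n(e)^{\pm 1}$), this path lifts uniquely to a path in $X_n$ whose starting vertex is $(v_0,1)$ and whose endpoint is some $(v,h)$. In particular, $(v,h)$ lies in the same connected component as $(v_0,1)$, and for any other $g \in G^{(n)}$, multiplying the group coordinates by $gh^{-1}$ shows that $(v,g)$ lies in the same connected component as $(v_0,gh^{-1})$. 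So it is enough to reach every $(v_0,g)$ from $(v_0,1)$.

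Next I would exploit the cycles $C_1,\ldots,C_l$. Each $C_i$ is a closed path based at $v_0$ in $X$; lifting $C_i$ to $X_n$ starting at $(v_0,h)$ produces a path terminating at $(v_0, h\cdot \bar{\beta}(C_i))$, where $\bar{\beta}(C_i)$ denotes the image of $\beta(C_i)$ under the natural projection $G \twoheadrightarrow G^{(n)}$. Traversing $C_i$ in the reverse direction similarly produces a path to $(v_0, h\cdot \bar{\beta}(C_i)^{-1})$. By concatenating lifts of the $C_i^{\pm 1}$, we can therefore reach from $(v_0,1)$ every vertex of the form $(v_0, w)$ with $w$ any word in $\bar{\beta}(C_1),\ldots,\bar{\beta}(C_l)$ and their inverses.

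It now remains to check that $\bar{\beta}(C_1),\ldots,\bar{\beta}(C_l)$ generate $G^{(n)}$ as an abstract group. Since $\{\beta(C_1),\ldots,\beta(C_l)\}$ topologically generates $G$, the subgroup they generate is dense in $G$; in particular, it surjects onto every finite quotient of $G$, and $G^{(n)} = G/G^{p^n}$ is such a finite quotient (the subgroup $G^{p^n}$ being open by uniformness). Hence the images $\bar{\beta}(C_i) \in G^{(n)}$ generate all of $G^{(n)}$, so every $g \in G^{(n)}$ can be written as such a word and is thus reachable from $(v_0,1)$. Combined with the first step this proves that $X_n$ is connected.

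The argument is essentially routine once the lifting property of paths in the derived graph is set up; the only subtle point is ensuring that topological generation of $G$ translates to genuine generation of each finite quotient $G^{(n)}$, and this follows cleanly from the openness of $G^{p^n}$ in $G$.
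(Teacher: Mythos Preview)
Your proof is correct and follows essentially the same approach as the paper: lift the cycles $C_i$ to move within the fiber over $v_0$, use that topological generators of $G$ give genuine generators of the finite quotient $G^{(n)}$, and then use connectedness of $X$ to reach the remaining vertices. The only cosmetic difference is that you invoke the $G^{(n)}$-action by graph automorphisms to reduce to the fiber over $v_0$, whereas the paper writes out an explicit path-and-conjugation argument $(v,g)\sim(v_0,g\gamma)\sim(v_0,g\gamma\beta_i)\sim(v,g\gamma\beta_i\gamma^{-1})$ achieving the same end.
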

\begin{proof}
For two vertices $(v,g)$ and $(v', g')$ in $V(X_n)$ we write ${(v,g) \sim (v', g')}$ if there exists a path from $(v,g)$ to $(v', g')$. This defines an equivalence relation on the set of vertices $V(X_n)$, and we want to show that there is only one equivalence class. 

Each element in $G^{(n)}$ can be written as a linear combination of $\beta_i=\beta(C_i)$ (by abuse of notation we also view $\beta(C_i)$ as generators of $G^{(n)}$). By definition of the derived graph $X_n$, $(v_0,g)$ is connected to $(v_0,g\beta_i)$ for $1\le i\le l$. Let now $v$ be an arbitrary vertex of $X$. Fix a path $c$ from $v$ to $v_0$ and let $\gamma=\beta(c)$. Then 
\[(v,g)\sim (v_0,g\gamma)\sim (v_0,g\gamma\beta_i)\sim (v,g\gamma\beta_i\gamma^{-1}).\]
As the $\beta_i$ are topological generators of $G$ we see that $(v,g)$ is connected to every $(v,g')$. This suffices for proving the lemma since $X$ is connected by assumption. 
\end{proof}

\section{algebraic structure theorems} \label{section:algebraic_theory} 
The aim of the section is to describe the algebraic structure of the modules $\textup{Div}_\Lambda$, $\textup{Pr}_\Lambda$ and $\textup{Div}^0_\Lambda$ as $\Lambda$-modules. Ultimately we are interested in quotients of these modules and we will show that $\textup{Div}_\Lambda/\textup{Pr}_\Lambda$ actually coincides with the projective limit of the groups $\textup{Pr}(X_n)$.
\begin{lemma}
\label{lem:free}
    $\textup{Pr}_\Lambda$ is $\Lambda$-free and embeds naturally into $\textup{Div}_\Lambda$.
\end{lemma}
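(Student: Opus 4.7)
The plan is to identify $\textup{Pr}_\Lambda$ with the image of a concrete matrix acting on $\textup{Div}_\Lambda$ and to deduce both assertions from the injectivity of that matrix. First, using the $G$-action on $V(X_\infty) = V(X) \times G$, we identify $\textup{Div}(X_\infty)$ with the free $R$-module $R v_1 \oplus \cdots \oplus R v_m$, so that $\textup{Div}_\Lambda \cong \Lambda^m$. Since the firing move at $(v_i, g)$ equals $g \cdot p_i$ under the $G$-action, $\textup{Pr}(X_\infty)$ is generated over $R$ by $p_1, \ldots, p_m$; expressing each $p_i$ in the basis $v_1, \ldots, v_m$ yields an $m \times m$ matrix $M \in M_m(R)$ (essentially $D - A_\alpha^t$), and the composition $R^m \twoheadrightarrow \textup{Pr}(X_\infty) \hookrightarrow \textup{Div}(X_\infty)$ is left multiplication by $M$.

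Tensoring with $\Lambda$, we obtain the sequence $\Lambda^m \twoheadrightarrow \textup{Pr}_\Lambda \to \textup{Div}_\Lambda = \Lambda^m$ whose composite $\phi$ is left multiplication by the same $M$, now viewed in $M_m(\Lambda)$. A short diagram chase shows that \emph{if} $\phi$ is injective, then the first arrow is automatically an isomorphism (proving that $\textup{Pr}_\Lambda \cong \Lambda^m$ is $\Lambda$-free of rank $m$) and the second arrow is automatically injective (proving the embedding). So the entire lemma reduces to showing injectivity of $\phi$.

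To prove $\phi$ injective, the plan is to reduce modulo $I_n$ for every $n \in \N$. Under $\Lambda/I_n = \Z_p[G^{(n)}]$ and the identification $\Z_p[G^{(n)}]^m \cong \Z_p^{V(X_n)}$ coming from $V(X_n) = V(X) \times G^{(n)}$, the reduction $M \bmod I_n$ acts as the ordinary combinatorial Laplacian of $X_n$; since $X_n$ is connected, its $\Z_p$-kernel is the line spanned by the all-ones vector, which in $\Z_p[G^{(n)}]^m$ corresponds to $\mathrm{Nm}_n \cdot (1, \ldots, 1)$ with $\mathrm{Nm}_n := \sum_{g \in G^{(n)}} g$. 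Consequently, for any $v \in \ker \phi$ we have $v \bmod I_n = a_n \mathrm{Nm}_n \cdot (1, \ldots, 1)$ for some $a_n \in \Z_p$.

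The final step uses the norm compatibility: since $|G^{p^n}/G^{p^{n+1}}| = p^l$, the transition map $\Z_p[G^{(n+1)}] \to \Z_p[G^{(n)}]$ sends $\mathrm{Nm}_{n+1}$ to $p^l \mathrm{Nm}_n$. Comparing the reductions of $v$ at consecutive levels forces $a_n = p^l a_{n+1}$, and iterating gives $a_n \in \bigcap_{k \ge 0} p^{lk} \Z_p = \{0\}$. Hence $v \bmod I_n = 0$ for every $n$, and the standard fact $\bigcap_n I_n = 0$ in $\Lambda$ (which follows from $\Lambda = \varprojlim_n \Z_p[G^{(n)}]$) yields $v = 0$. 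The main obstacle will be the combinatorial bookkeeping that correctly identifies $M \bmod I_n$ with the Laplacian of $X_n$ in the presence of the voltage conventions; once this identification is in place, the norm-compatibility step is purely formal and relies only on the structure of $G$ as a uniform pro-$p$ group.
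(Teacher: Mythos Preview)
Your proof is correct and follows essentially the same route as the paper: both reduce the freeness and embedding to showing that a relation $\sum F_i p_i = 0$ in $\textup{Div}_\Lambda$ must be trivial, pass to the finite quotients $\Z_p[G^{(n)}]$ where the only relation among the $p_i$ is the norm $N_n$ (because $X_n$ is connected), and then use that $\bigcap_n \pi_n^{-1}(N_n\Z_p[G^{(n)}])=\{0\}$. The paper simply asserts this last intersection is ``easy to verify'', whereas you spell it out via the norm-compatibility $N_{n+1}\mapsto p^l N_n$; your matrix/Laplacian packaging is a cosmetic difference.
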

\begin{proof}
    Note that $\textup{Div}_\Lambda$ is $\Lambda$-free by definition. $\textup{Pr}_\Lambda$ is generated by the elements $p_{i,0}$ as a $\Lambda$-module. If we can show that these generators do not satisfy a non-trivial $\Lambda$-relation, then we are done. For each of the finite groups $G^{(n)}$ we let ${N_n=\sum_{g\in G^{(n)}}g}$. Then $N_n\Z_p[G^{(n)}]\cong \Z_p$ and it is easy to verify that ${\bigcap_{n \in \N} \pi_n^{-1}(N_n\Z_p[G^{(n)}])=\{0\}}$. Assume now that we have a vanishing linear combination $\sum F_ip_{i,0}$. We can assume that $F_0\neq 0$ and we can choose $n$ large enough such that ${F_0\notin \pi_n^{-1}(N_n\Z_p[G^{(n)}])}$.
    Note that $\textup{Pr}(X_n)$ has $\Z_p$-rank $\vert G^{(n)}\vert \cdot \vert X\vert-1$ and that the only (up to scalar multiples) non-trivial vanishing linear combination of the $p_{i,0}$ in $\textup{Div}(X_n)$ is given by 
    \[\sum N_np_{i,0} = 0. \]
    Indeed, if there were more such linear combinations then the $\Z_p$-rank of the module generated by the $p_{i,0}$ would have rank smaller than $\vert G^{(n)}\vert \cdot \vert X\vert -1$.
    On the other hand we also have
    \[\sum \pi_n(F_i)p_{i,0}=0,\]
    yielding $\pi_n(F_i)\in N_n\Z_p[G^{(n)}]$ for each $i$, which is impossible for ${i = 0}$.  Thus, such a linear combination does not exist and $\textup{Pr}_\Lambda$ is indeed $\Lambda$-free and embeds naturally into $\textup{Div}_\Lambda$.
\end{proof}
\begin{lemma} \label{lemma:surjective} 
    Each $\pi_n$ induces surjective maps 
    \[ \textup{Div}^0(X_\infty)\longrightarrow \textup{Div}^0(X_n), \quad \textup{Pr}(X_\infty)\longrightarrow \textup{Pr}(X_n). \]  
\end{lemma}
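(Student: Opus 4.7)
The plan is to exploit the fact that $\pi_n$ on divisors comes from the underlying covering of graphs $X_\infty\to X_n$ which sends $(v,g)\mapsto (v,\bar g)$, where $\bar g$ denotes the image of $g$ in $G^{(n)}$. The crucial property is that this covering is a local isomorphism: it restricts to a bijection between the edges incident to $(v,g)$ in $X_\infty$ and the edges incident to $(v,\bar g)$ in $X_n$. This is immediate from the voltage-cover construction, since an edge out of $(v,g)$ corresponds to an edge $e\in\mathbf{E}(X)$ at $v$ together with the rule $g'=g\alpha(e)$, and reduction modulo $G^{p^n}$ matches such edges bijectively; in particular $\deg((v,g))=\deg((v,\bar g))$.

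First I would handle the surjection onto $\textup{Div}^0(X_n)$. Any $d\in\textup{Div}^0(X_n)$ has finite support. Choose for each $(v,\bar g)$ in the support an arbitrary lift $(v,g)\in V(X_\infty)$, and let $\tilde d\in\textup{Div}(X_\infty)$ be the divisor which assigns to $(v,g)$ the same coefficient that $d$ assigns to $(v,\bar g)$. Then $\tilde d$ is finitely supported, it satisfies $\pi_n(\tilde d)=d$, and the sum of its coefficients equals that of $d$, hence is zero. Thus $\tilde d\in\textup{Div}^0(X_\infty)$.

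For the second surjection, recall that by definition an element of $\textup{Pr}(X_n)$ is obtained from a finite sequence of firing moves at chosen vertices $(v_1,\bar g_1),\dots,(v_s,\bar g_s)$ with signs $a_i\in\{\pm 1\}$. I would choose arbitrary lifts $(v_i,g_i)\in V(X_\infty)$ and perform the analogous sequence of firing moves, with the same signs, at these lifts. By the local isomorphism property, the firing move at $(v_i,g_i)$ in $X_\infty$ projects under $\pi_n$ exactly to the firing move at $(v_i,\bar g_i)$ in $X_n$, so the resulting principal divisor in $X_\infty$ maps to the given element of $\textup{Pr}(X_n)$.

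There is no real obstacle: the argument is essentially bookkeeping, relying only on the fact that voltage covers are unramified. The single point meriting explicit verification is the claimed bijection between the edges at $(v,g)$ and the edges at $(v,\bar g)$, which follows directly from the definition of the derived graph recalled in Section~\ref{section:notation}.
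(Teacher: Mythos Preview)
Your argument is correct. You take a direct, geometric route: lift individual vertices to prove surjectivity onto $\textup{Div}^0(X_n)$, and lift firing moves (using that the covering $X_\infty\to X_n$ is a local isomorphism) to prove surjectivity onto $\textup{Pr}(X_n)$. The paper argues more algebraically: it observes that $\textup{Div}^0(X_n)$ is generated as a $\Z_p[G^{(n)}]$-module by the elements $v_i-v_j$ together with $\overline{I}_0\cdot\textup{Div}(X_n)$, and that $\textup{Pr}(X_n)$ is generated by the $p_{i,0}$, and then notes that all of these generators visibly have preimages in $\textup{Div}^0(X_\infty)$ respectively $\textup{Pr}(X_\infty)$. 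Your approach is self-contained and requires no module-theoretic bookkeeping; the paper's approach has the advantage that the very same generating sets are reused in the subsequent lemmas (e.g.\ in the definition of $M_\Lambda$ and in Lemma~\ref{lemma:M_Lambda}), so its proof dovetails directly with what follows.
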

\begin{proof}
    Note that $\textup{Div}^0(X_n)$ is a finitely generated $\Z_p[G^{(n)}]$-module. Recall that $I_n$ denotes the kernel of the natural map $\Lambda\longrightarrow \Z_p[G^{(n)}]$. Let $\overline{I}_0$ be the image of $I_0$ under the natural projection $\Lambda\longrightarrow \Z_p[G^{(n)}]$. Then $\textup{Div}^0(X_n)$ is as a $\Z_p[G^{(n)}]$-module generated by $\overline{I}_0\textup{Div}(X_n)$ and the elements $v_i-v_j$ for $v_i,v_j\in V(X)$. As all the generators have pre-images in $\textup{Div}^0(X_\infty)$ the first claim follows. 
    The proof for $\textup{Pr}(X_n)$ is similar. We just have to work with the generators $p_{i,0}$ instead. 
\end{proof}
\begin{lemma} \label{lemma:kernel} 
    The kernel of $\pi_n \colon \textup{Div}(X_\infty) \longrightarrow \textup{Div}(X_n)$ is $I_n\textup{Div}_\Lambda\cap \textup{Div}(X_\infty)$.
\end{lemma}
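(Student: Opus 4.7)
The plan is to exploit a coordinate description of both sides using the freeness of $\textup{Div}(X_\infty)$ over $R=\Z_p[G]$ and of $\textup{Div}_\Lambda$ over $\Lambda$. Writing $V(X)=\{v_1,\dots,v_m\}$ and embedding each $v_i$ as $(v_i,1)$ in $X_\infty$, the fibre of $X_\infty\to X$ above $v_i$ is the $G$-orbit of $(v_i,1)$, so $\textup{Div}(X_\infty)=\bigoplus_{i=1}^m R\cdot v_i$ is $R$-free on $v_1,\dots,v_m$. Since tensor products commute with direct sums, it follows that $\textup{Div}_\Lambda=\bigoplus_{i=1}^m \Lambda\cdot v_i$ is $\Lambda$-free on the same generators, and the natural map $\textup{Div}(X_\infty)\longrightarrow \textup{Div}_\Lambda$ is coordinate-wise the inclusion $R\hookrightarrow \Lambda$, in particular injective, so that the intersection on the right-hand side of the lemma makes literal sense inside $\textup{Div}_\Lambda$.

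With this in place, the equality comes down to reading off both sides in these coordinates. For $d=\sum_{i=1}^m f_i v_i\in \textup{Div}(X_\infty)$ with $f_i\in R$, the map $\pi_n$ sends $d$ to $\sum_i \overline{f_i}\,v_i\in \textup{Div}(X_n)$, where $\overline{f_i}$ is the image of $f_i$ under the natural projection $R\longrightarrow \Z_p[G^{(n)}]$. Since $\textup{Div}(X_n)$ is $\Z_p[G^{(n)}]$-free on $v_1,\dots,v_m$, the vanishing $\pi_n(d)=0$ is equivalent to $\overline{f_i}=0$ for every $i$, i.e.\ to $f_i\in I_n\cap R$ for every $i$. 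On the other hand, freeness of $\textup{Div}_\Lambda$ gives $I_n\,\textup{Div}_\Lambda=\bigoplus_i I_n\cdot v_i$, so intersecting with $\bigoplus_i R\cdot v_i$ yields $\bigoplus_i (I_n\cap R)\cdot v_i$, which matches the description of $\ker(\pi_n)$ just obtained.

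I do not expect any substantial obstacle; the statement is essentially a book-keeping identity. The one point worth articulating is that $I_n\cap R$ really coincides with the kernel of the projection $R\longrightarrow \Z_p[G^{(n)}]$, but this is immediate from the construction: the map $R\to \Z_p[G^{(n)}]$ is the restriction to $R\subseteq \Lambda$ of the map $\Lambda\to \Z_p[G^{(n)}]$ whose kernel is $I_n$ by definition. Everything else follows by unwinding definitions, and no completion or flatness arguments are needed because we only use freeness of the divisor modules over the relevant group rings.
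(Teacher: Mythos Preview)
Your proof is correct and follows essentially the same approach as the paper: both arguments use the freeness of $\textup{Div}(X_n)$, $\textup{Div}(X_\infty)$ and $\textup{Div}_\Lambda$ on the common basis $v_1,\dots,v_m$ to reduce the statement to the coordinate-wise identity $\ker(R\to\Z_p[G^{(n)}])=I_n\cap R$. The paper phrases this via the chain of isomorphisms $\textup{Div}(X_n)\cong\bigoplus_i(\Lambda/I_n)v_i\cong\textup{Div}_\Lambda/I_n\textup{Div}_\Lambda$, while you unpack the same computation more explicitly in coordinates, but there is no substantive difference.
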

\begin{proof}
    The group of divisors $\textup{Div}(X_n)$ is a free $\Z_p[G^{(n)}]$-module in the generators $v_0,v_1,\dots, v_m$, where ${|V(X)| = m}$. Thus, we obtain isomorphisms
    \begin{eqnarray*} \textup{Div}(X_n) & \cong & 
    \bigoplus_{i=0}^m\Z_p[G^{(n)}]v_i \; \cong \; \bigoplus_{i=0}^m(\Lambda/I_n) v_i \\ 
    & \cong & \textup{Div}_\Lambda/I_n \; \cong \; \textup{Div}(X_\infty)/(I_n\textup{Div}_\Lambda\cap \textup{Div}(X_\infty)).\end{eqnarray*} 
\end{proof}

Recall that $M_\Lambda$ denotes the submodule of $\textup{Div}_\Lambda$ which is generated by the $p_{i,0}$, the elements ${v_{i} - v_{j}}$ for ${1 \le j < i \le m}$ and $I_0 \textup{Div}_\Lambda$. 
\begin{lemma} \label{lemma:M_Lambda} We have
    \[I_n\textup{Div}_\Lambda +\textup{Div}^0(X_\infty)=M_\Lambda\]
    and \[\textup{Pr}_\Lambda+I_n\textup{Div}_\Lambda=I_n\textup{Div}_\Lambda+\textup{Pr}(X_\infty).\] 
\end{lemma}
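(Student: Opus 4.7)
Both identities rest on a common lifting device: for any $x\in\Lambda$, the image $\bar x\in\Lambda/I_n=\Z_p[G^{(n)}]$ is a finite $\Z_p$-linear combination of group elements, so a set-theoretic section $G^{(n)}\to G$ yields a lift $\tilde x\in\Z_p[G]\subseteq\Lambda$ with $x-\tilde x\in I_n$ and with the same image under the augmentation $\pi_0\colon\Lambda\to\Z_p$ as $\bar x$. The plan is to prove the second equation first, since it is cleaner. The inclusion $\supseteq$ is immediate from $\textup{Pr}(X_\infty)\subseteq\textup{Pr}_\Lambda$. For the other inclusion, write a general element of $\textup{Pr}_\Lambda$ as $\sum_i x_ip_i$ with $x_i\in\Lambda$, apply the lifting to each $x_i$, and decompose $x_ip_i=(x_i-\tilde x_i)p_i+\tilde x_ip_i$. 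The first summand sits in $I_n\textup{Div}_\Lambda$, while $\tilde x_ip_i$ is a finite $\Z_p$-linear combination of $G$-translates of $p_i$, hence in $\textup{Pr}(X_\infty)$ since $\textup{Pr}(X_\infty)$ is $G$-stable.

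For the first identity the two inclusions will be handled separately. To show $M_\Lambda\subseteq I_n\textup{Div}_\Lambda+\textup{Div}^0(X_\infty)$, observe that the generators $p_i$ and $v_i-v_j$ already lie in $\textup{Div}^0(X_\infty)$, so only the generator $I_0\textup{Div}_\Lambda$ requires work. Given $xv_i$ with $x\in I_0$, the augmentation factors through $\Lambda/I_n$, so $\bar x$ lies in the augmentation ideal of $\Z_p[G^{(n)}]$ and any lift $\tilde x\in\Z_p[G]$ lies in $\ker(\pi_0)\cap\Z_p[G]$. Consequently $\tilde xv_i\in\textup{Div}^0(X_\infty)$ and $(x-\tilde x)v_i\in I_n\textup{Div}_\Lambda$, as required. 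For the reverse inclusion, the containment $I_n\subseteq I_0$ gives $I_n\textup{Div}_\Lambda\subseteq I_0\textup{Div}_\Lambda\subseteq M_\Lambda$; for $\textup{Div}^0(X_\infty)\subseteq M_\Lambda$, I would expand $d\in\textup{Div}^0(X_\infty)$ in the $\Lambda$-basis of $\textup{Div}_\Lambda$ given by the vertices of $X$, writing $d=\sum_i\lambda_iv_i$ with $\lambda_i\in\Z_p[G]$ of finite support and $\sum_i\pi_0(\lambda_i)=0$. Splitting $\lambda_i=(\lambda_i-\pi_0(\lambda_i))+\pi_0(\lambda_i)$ puts the first summand in $I_0\textup{Div}_\Lambda$, while the second produces an element of $\textup{Div}^0(X)$, which is a $\Z_p$-linear combination of the generators $v_i-v_j$ of $M_\Lambda$.

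The main point to verify carefully is the augmentation-preserving nature of the lift: because $\pi_0$ factors through each $\Lambda/I_n$, any lift of an element of $I_0/I_n$ via a set-theoretic section automatically lies in $\ker(\pi_0)\cap\Z_p[G]$, and this is exactly what guarantees that $\tilde x v_i$ is a genuine degree-zero divisor on the infinite cover $X_\infty$ rather than just a finitely supported combination of vertices. Once this bookkeeping is in place, both inclusions reduce to routine manipulations of finite-support subsets inside the free $\Lambda$-module $\textup{Div}_\Lambda$, with $I_n\subseteq I_0$ taking care of the gap between the two ideals in play.
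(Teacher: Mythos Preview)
Your argument is correct and more hands-on than the paper's. The paper proves the non-obvious inclusion $M_\Lambda\subseteq I_n\textup{Div}_\Lambda+\textup{Div}^0(X_\infty)$ by projecting an element $y\in M_\Lambda$ to $\textup{Div}^0(X_n)$, invoking the surjectivity of $\pi_n\colon\textup{Div}^0(X_\infty)\to\textup{Div}^0(X_n)$ (Lemma~\ref{lemma:surjective}) to pick a preimage $z\in\textup{Div}^0(X_\infty)$, and then observing that $y-z\in\ker(\textup{Div}_\Lambda\to\textup{Div}(X_n))=I_n\textup{Div}_\Lambda$; the reverse inclusion is declared obvious. You instead establish the coefficient decomposition $\Lambda=\Z_p[G]+I_n$ via a set-theoretic section $G^{(n)}\to G$ and push it through each type of generator. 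Your route is self-contained and avoids the appeal to Lemma~\ref{lemma:surjective}, while the paper's route is slicker once that lemma is in hand and shows more transparently that the right-hand side is exactly the preimage of $\textup{Div}^0(X_n)$ in $\textup{Div}_\Lambda$.

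One small point worth tightening: when you say ``the generators $p_i$ and $v_i-v_j$ already lie in $\textup{Div}^0(X_\infty)$, so only $I_0\textup{Div}_\Lambda$ requires work'', remember that $M_\Lambda$ is the $\Lambda$-submodule generated by these elements, and $I_n\textup{Div}_\Lambda+\textup{Div}^0(X_\infty)$ is not a priori $\Lambda$-stable. Of course your lifting device gives $\Lambda=\Z_p[G]+I_n$, so for any $d\in\textup{Div}^0(X_\infty)$ and $\lambda\in\Lambda$ one has $\lambda d=\tilde\lambda d+(\lambda-\tilde\lambda)d\in\textup{Div}^0(X_\infty)+I_n\textup{Div}_\Lambda$; stating this once makes the reduction to generators legitimate.
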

\begin{proof}
    We only give a proof for the first claim -- the second one can be proved analogously. Note that there is a natural projection 
    \[ M_\Lambda \longrightarrow M_\Lambda/I_n \longrightarrow\textup{Div}^0(X_n).\] 
    Likewise, ${\textup{Div}^0(X_\infty)\longrightarrow \textup{Div}^0(X_n)}$ is surjective by Lemma \ref{lemma:surjective}. Let ${y\in M_\Lambda}$ and let $\overline{y}$ be its image in $\textup{Div}^0(X_n)$. Let ${z\in \textup{Div}^0(X_\infty)}$ be a preimage of $\overline{y}$ under $\pi_n$. It follows that $z-y\in \ker (\textup{Div}_\Lambda\longrightarrow \textup{Div}(X_n))=I_n\textup{Div}_\Lambda$. Therefore, 
    \[y\in z+I_n\textup{Div}_\Lambda.\] 
    It follows that $M_\Lambda\subseteq \textup{Div}^0(X_\infty)+I_n\textup{Div}_\Lambda$. As the other inclusion is obvious the claim follows. 
\end{proof}
\begin{lemma} \label{lemma:J} We have an isomorphism
    \[J(X_n) \cong M_\Lambda/(\textup{Pr}_\Lambda +I_n\textup{Div}_\Lambda).\]
\end{lemma}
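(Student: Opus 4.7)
The plan is to define a surjective $\Lambda$-module homomorphism $\bar\varphi \colon M_\Lambda \longrightarrow J(X_n)$ by restricting the projection $\pi_n$ and composing with the quotient by $\textup{Pr}(X_n)$, and then to identify its kernel with $\textup{Pr}_\Lambda + I_n \textup{Div}_\Lambda$. For this to make sense I first need to note the containment $\textup{Pr}_\Lambda + I_n \textup{Div}_\Lambda \subseteq M_\Lambda$: the module $\textup{Pr}_\Lambda$ is generated by the $p_{i,0}$, which lie in $M_\Lambda$ by construction, and $I_n \subseteq I_0$ (as the surjection $\Lambda \twoheadrightarrow \Z_p[G^{(n)}]$ factors through $\Lambda \twoheadrightarrow \Z_p$), so $I_n\textup{Div}_\Lambda \subseteq I_0 \textup{Div}_\Lambda \subseteq M_\Lambda$.

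For surjectivity of $\bar\varphi$ I would apply Lemma~\ref{lemma:M_Lambda} to write $M_\Lambda = \textup{Div}^0(X_\infty) + I_n\textup{Div}_\Lambda$, so that $\pi_n(M_\Lambda) = \pi_n(\textup{Div}^0(X_\infty)) = \textup{Div}^0(X_n)$ by Lemma~\ref{lemma:surjective}. Composing with the quotient to $J(X_n)$ then yields the desired surjection. The containment of $\textup{Pr}_\Lambda + I_n \textup{Div}_\Lambda$ in $\ker \bar\varphi$ is immediate: $\pi_n$ sends the generators $p_{i,0}$ of $\textup{Pr}_\Lambda$ into $\textup{Pr}(X_n)$ (which is a $\Z_p[G^{(n)}]$-submodule), and $I_n \textup{Div}_\Lambda$ is annihilated by $\pi_n$ on $\textup{Div}_\Lambda$ (extending Lemma~\ref{lemma:kernel} to the full $\Lambda$-module, using $\textup{Div}_\Lambda/I_n\textup{Div}_\Lambda \cong \textup{Div}(X_n)$, which is exactly the middle isomorphism established in the proof of Lemma~\ref{lemma:kernel}).

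The reverse inclusion $\ker\bar\varphi \subseteq \textup{Pr}_\Lambda + I_n\textup{Div}_\Lambda$ is the main point of the argument. Given $x \in M_\Lambda$ with $\pi_n(x) \in \textup{Pr}(X_n)$, I would use the surjectivity of $\textup{Pr}(X_\infty) \longrightarrow \textup{Pr}(X_n)$ from Lemma~\ref{lemma:surjective} to produce $y \in \textup{Pr}(X_\infty) \subseteq \textup{Pr}_\Lambda$ with $\pi_n(y) = \pi_n(x)$. Then $x - y$ lies in the kernel of $\pi_n$ on $\textup{Div}_\Lambda$, which is $I_n\textup{Div}_\Lambda$, so $x = y + (x-y) \in \textup{Pr}_\Lambda + I_n\textup{Div}_\Lambda$ as required. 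This completes the identification $\ker\bar\varphi = \textup{Pr}_\Lambda + I_n\textup{Div}_\Lambda$, and the first isomorphism theorem then gives the asserted isomorphism.

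The only subtlety I anticipate is the clean passage between $\pi_n$ viewed on the countable $R$-module $\textup{Div}(X_\infty)$ and on its extension $\textup{Div}_\Lambda$; everything else reduces to direct bookkeeping with the lemmas already in place. In particular there is no step that needs a new algebraic ingredient, and the proof should be essentially a diagram chase organised around the three preparatory lemmas \ref{lemma:surjective}, \ref{lemma:kernel}, and \ref{lemma:M_Lambda}.
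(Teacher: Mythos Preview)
Your proposal is correct and follows essentially the same route as the paper. The paper organises the computation by first identifying $\textup{Div}^0(X_n) \cong M_\Lambda/I_n\textup{Div}_\Lambda$ and $\textup{Pr}(X_n) \cong (\textup{Pr}_\Lambda + I_n\textup{Div}_\Lambda)/I_n\textup{Div}_\Lambda$ separately (via Lemmas~\ref{lemma:kernel} and~\ref{lemma:M_Lambda}) and then taking the quotient, whereas you build the surjection $M_\Lambda \to J(X_n)$ directly and identify its kernel; but the underlying inputs and the actual work done are identical.
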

\begin{proof}
    By Lemmas~\ref{lemma:kernel} and \ref{lemma:M_Lambda} we have \[\textup{Div}^0(X_n)\cong \textup{Div}^0(X_\infty)/(I_n\textup{Div}_\Lambda\cap \textup{Div}^0(X_\infty)) \cong M_\Lambda/I_n\textup{Div}_\Lambda. \]
    Likewise we have
    \begin{eqnarray*} 
    \textup{Pr}(X_n) & \cong & \textup{Pr}(X_\infty)/(I_n \textup{Div}_\Lambda \cap \textup{Pr}(X_\infty)) \\ & \cong & (\textup{Pr}(X_\infty) + I_n \textup{Div}_\Lambda)/I_n \textup{Div}_\Lambda \\ 
    & = & (\textup{Pr}_\Lambda +I_n\textup{Div}_\Lambda)/I_n\textup{Div}_\Lambda. \end{eqnarray*} 
    Taking the quotient gives the claim.
\end{proof}

\begin{lemma} \label{lemma:iso} We have an isomorphism
    \[\textup{Div}^0_\Lambda\cong M_\Lambda. \]
\end{lemma}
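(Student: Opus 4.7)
The plan is to construct a natural $\Lambda$-linear map $\phi:\textup{Div}^0_\Lambda\to \textup{Div}_\Lambda$ by tensoring the inclusion $\textup{Div}^0(X_\infty)\hookrightarrow\textup{Div}(X_\infty)$ with $\Lambda$ over $R$, and to show that $\phi$ is an isomorphism onto $M_\Lambda$.

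The identification of the image with $M_\Lambda$ is a direct generator check. As an $R$-module, $\textup{Div}^0(X_\infty)$ is generated by the vertex differences $v_i - v_j$ and by the elements $(g-1)v_i$ for $g \in G$, all of which lie in $M_\Lambda$ by its definition. Conversely, each defining generator of $M_\Lambda$ lies in $\textup{Div}^0(X_\infty)$: the $v_i - v_j$ trivially, the $p_i$ because firing-move divisors have degree zero, and the $(g-1)v_i$ (which $\Lambda$-generate $I_0 \textup{Div}_\Lambda$) because they are differences of vertices of $X_\infty$. Equivalently, Lemma~\ref{lemma:M_Lambda} with $n=0$ gives $M_\Lambda = \textup{Div}^0(X_\infty) + I_0 \textup{Div}_\Lambda$, and both summands lie in the $\Lambda$-span of $\textup{Div}^0(X_\infty)$.

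For the injectivity of $\phi$ I would exploit the $R$-module direct sum decomposition
\[
\textup{Div}^0(X_\infty) \;=\; I_R \, v_0 \;\oplus\; \bigoplus_{i=1}^m R(v_i - v_0),
\]
obtained by writing any degree-zero tuple $(a_0,\ldots,a_m)\in R^{m+1}$ uniquely as $(\sum_i a_i)v_0 + \sum_{i\ge 1} a_i(v_i - v_0)$, where $\sum_i a_i$ lies in the augmentation ideal $I_R\subseteq R$ by the degree condition. Tensoring over $R$ with $\Lambda$ yields $\textup{Div}^0_\Lambda \cong (I_R \otimes_R \Lambda) \oplus \Lambda^m$; the free part $\Lambda^m$ maps injectively into $\textup{Div}_\Lambda$ as $\bigoplus_{i=1}^m \Lambda(v_i - v_0)$, so everything reduces to showing that the multiplication map $I_R \otimes_R \Lambda \to I_\Lambda$ is injective.

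The main obstacle is exactly this last injectivity, which amounts to the vanishing $\textup{Tor}_1^R(\Z_p, \Lambda) = 0$. I plan to establish it by a descent-to-finite-levels argument in the spirit of Lemma~\ref{lem:free}: any $\xi \in \ker(\phi)$ would, via the surjections $\Lambda \twoheadrightarrow \Z_p[G^{(n)}]$ together with Lemmas~\ref{lemma:surjective} and~\ref{lemma:kernel}, descend at every level $n$ to the kernel of $\textup{Div}^0(X_n)\hookrightarrow \textup{Div}(X_n)$, which is trivial. Such a $\xi$ must therefore belong to $\bigcap_n I_n \textup{Div}^0_\Lambda$, and this intersection vanishes by Krull's intersection theorem for finitely generated modules over the Noetherian local ring $\Lambda$, completing the proof.
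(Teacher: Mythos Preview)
Your identification of the image of $\phi$ with $M_\Lambda$ is correct and essentially matches the paper's reasoning.

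The injectivity argument, however, has a genuine gap. From $\phi(\xi)=0$ you deduce that the image of $\xi$ in $\textup{Div}(X_n)=\textup{Div}_\Lambda/I_n\textup{Div}_\Lambda$ vanishes for every $n$, and then claim $\xi\in I_n\textup{Div}^0_\Lambda$. For this last step you would need the induced map
\[
\textup{Div}^0_\Lambda/I_n\textup{Div}^0_\Lambda \;=\; \textup{Div}^0(X_\infty)\otimes_R\Z_p[G^{(n)}]\;\longrightarrow\;\textup{Div}(X_n)
\]
to be injective. But that is exactly the same question you are trying to settle, only with $\Lambda$ replaced by $\Z_p[G^{(n)}]$: its kernel is $\textup{Tor}_1^R(\Z_p,\Z_p[G^{(n)}])$, which by Shapiro's lemma identifies with $H_1(G_n,\Z_p)$ and is nonzero for $G$ uniform of positive dimension. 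So the finite-level map is \emph{not} injective, and the vanishing of $\xi$ in $\textup{Div}^0(X_n)\subseteq\textup{Div}(X_n)$ does not force $\xi$ to lie in $I_n\textup{Div}^0_\Lambda$. Lemmas~\ref{lemma:surjective} and~\ref{lemma:kernel} are statements about $\textup{Div}^0(X_\infty)$ and $\textup{Div}(X_\infty)$ themselves, not about their completed tensor products, so they do not bridge this gap. Your descent is therefore circular: it pushes the Tor-vanishing problem to finite level, where the same obstruction appears.

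The paper takes a different route for injectivity. It observes that $\textup{Div}(X_\infty)/\textup{Div}^0(X_\infty)\cong\Z_p$ is annihilated by $I_0$, so the image of $\textup{Tor}_1^R(\Z_p,\Lambda)$ in $\textup{Div}^0_\Lambda$ is annihilated by $I_0$ as well; since $\textup{Div}^0_\Lambda$ is asserted to be $\Lambda$-torsion-free, this forces the kernel of $\phi$ to vanish.
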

\begin{proof} Note that the generators of $\textup{Div}^0(X_\infty)$ as a $R$-module all lie in $M_\Lambda$.
   Since $M_\Lambda$ can be identified with the image of the natural  map ${\textup{Div}^0_\Lambda\longrightarrow \textup{Div}_\Lambda}$, we have a surjective map $\textup{Div}^0_\Lambda \longrightarrow M_\Lambda$. It remains to show that the natural map ${\textup{Div}^0_\Lambda\longrightarrow \textup{Div}_\Lambda}$ is indeed injective. 
   Since $\textup{Div}(X_\infty)/\textup{Div}^0(X_\infty)$ is annihilated by $I_0$, the image of 
   \[ \textup{Tor}^1(\textup{Div}(X_\infty)/\textup{Div}^0(X_\infty),\Lambda)\] 
   in $\textup{Div}^0_\Lambda$ is annihilated by $I_0$. On the other hand ${\textup{Div}^0_\Lambda = \textup{Div}^0(X_\infty)\otimes \Lambda}$ is $\Lambda$-torsion free. Thus, the natural map $\textup{Div}^0_\Lambda\longrightarrow \textup{Div}_\Lambda$ is indeed injective. 
\end{proof}
\begin{def1}
 We let $J_\infty = J(X_\infty) \otimes_R \Lambda$ and $\textup{Pic}_\infty=\textup{Pic}(X_\infty)\otimes_R\Lambda$.
\end{def1}
The above lemma has the following important consequence. 
\begin{corollary} \label{cor:J_infty} 
    We have short exact sequences
    \[0\longrightarrow \textup{Pr}_\Lambda \longrightarrow \textup{Div}^0_\Lambda\longrightarrow J_\infty\longrightarrow 0 \]and
    \[0\longrightarrow \textup{Pr}_\Lambda \longrightarrow \textup{Div}_\Lambda\longrightarrow \textup{Pic}_\infty\longrightarrow 0. \]
\end{corollary}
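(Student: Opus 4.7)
The plan is to obtain both short exact sequences by tensoring the defining $R$-module short exact sequences
\[0 \longrightarrow \textup{Pr}(X_\infty) \longrightarrow \textup{Div}^0(X_\infty) \longrightarrow J(X_\infty) \longrightarrow 0,\]
\[0 \longrightarrow \textup{Pr}(X_\infty) \longrightarrow \textup{Div}(X_\infty) \longrightarrow \textup{Pic}(X_\infty) \longrightarrow 0\]
with $\Lambda$ over $R$. Since $-\otimes_R \Lambda$ is right exact, this immediately delivers the correct cokernels $J_\infty$ and $\textup{Pic}_\infty$ by definition, and exactness at the middle terms. The only thing left to verify in each case is injectivity of the leftmost arrow.

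For the Picard sequence this is nothing but Lemma~\ref{lem:free}, which asserts that $\textup{Pr}_\Lambda$ is $\Lambda$-free and that the natural map $\textup{Pr}_\Lambda \longrightarrow \textup{Div}_\Lambda$ is injective. So the second exact sequence follows at once.

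For the Jacobian sequence I would factor the natural map through $\textup{Div}^0_\Lambda$. Since every principal divisor has degree zero, the inclusions $\textup{Pr}(X_\infty) \subseteq \textup{Div}^0(X_\infty) \subseteq \textup{Div}(X_\infty)$ tensored with $\Lambda$ give a factorization
\[\textup{Pr}_\Lambda \longrightarrow \textup{Div}^0_\Lambda \longrightarrow \textup{Div}_\Lambda,\]
whose composition is the injection from Lemma~\ref{lem:free}. By Lemma~\ref{lemma:iso} the second arrow may be identified with the inclusion $M_\Lambda \hookrightarrow \textup{Div}_\Lambda$ and is therefore injective. Consequently the first arrow $\textup{Pr}_\Lambda \longrightarrow \textup{Div}^0_\Lambda$ must be injective as well, which yields the first exact sequence.

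The only subtle point is to make sure that the factorization $\textup{Pr}_\Lambda \to \textup{Div}^0_\Lambda \to \textup{Div}_\Lambda$ really coincides with the injection coming from Lemma~\ref{lem:free}; this is a matter of unwinding the natural maps, but it is essential in order to transfer injectivity from the composition back to the first factor. Apart from this bookkeeping the corollary is a formal consequence of the preceding structure results, so I do not anticipate any real obstacle.
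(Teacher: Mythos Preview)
Your proposal is correct and follows essentially the same approach as the paper: both arguments tensor the defining short exact sequences with $\Lambda$, invoke Lemma~\ref{lem:free} for the injection ${\textup{Pr}_\Lambda \hookrightarrow \textup{Div}_\Lambda}$, and then use Lemma~\ref{lemma:iso} (identifying $\textup{Div}^0_\Lambda$ with ${M_\Lambda \subseteq \textup{Div}_\Lambda}$) to deduce that ${\textup{Pr}_\Lambda \to \textup{Div}^0_\Lambda}$ is injective as well. One small remark: once you know the composition ${\textup{Pr}_\Lambda \to \textup{Div}^0_\Lambda \to \textup{Div}_\Lambda}$ is injective, the first factor is automatically injective; you do not actually need to invoke injectivity of the second arrow separately.
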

\begin{proof}
    We have a natural tautological exact sequence
    \[0\longrightarrow \textup{Pr}(X_\infty)\longrightarrow \textup{Div}^0(X_\infty)\longrightarrow J(X_\infty)\longrightarrow 0. \]
    If we tensor this sequence with $\Lambda$ we get a right exact sequence of $\Lambda$-modules
    \[\textup{Pr}_\Lambda \longrightarrow \textup{Div}^0_\Lambda\longrightarrow J_\infty\longrightarrow 0.\] 
    It remains to see that the left map is in fact injective. By Lemma \ref{lem:free} we know that $\textup{Pr}_\Lambda$ embeds into $\textup{Div}_\Lambda$. As the generators $p_{i,0}$ of $\textup{Pr}_\Lambda$ all lie in $M_\Lambda$ we see that $\textup{Pr}_\Lambda$ actually embeds into ${M_\Lambda \subseteq \textup{Div}_\Lambda}$ which is in turn isomorphic to $\textup{Div}^0_\Lambda$ by Lemma \ref{lemma:iso}.
    
    The second sequence can be proven similarly.
\end{proof}
\begin{lemma} We have isomorphisms
\label{tensor-proj-lim}
    \[J_\infty=\varprojlim_n J(X_n), \quad \textup{Pic}_\infty=\varprojlim_n\textup{Pic}(X_n). \]
\end{lemma}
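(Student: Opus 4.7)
My plan is to handle $\textup{Pic}_\infty$ first and then deduce the statement for $J_\infty$ via the degree sequence. First I would prove that $\textup{Pic}_\infty/I_n \textup{Pic}_\infty \cong \textup{Pic}(X_n)$ compatibly in $n$. Reducing the short exact sequence $0 \to \textup{Pr}_\Lambda \to \textup{Div}_\Lambda \to \textup{Pic}_\infty \to 0$ from Corollary~\ref{cor:J_infty} modulo $I_n$ gives $\textup{Pic}_\infty/I_n\textup{Pic}_\infty \cong \textup{Div}_\Lambda/(\textup{Pr}_\Lambda + I_n\textup{Div}_\Lambda)$. The proof of Lemma~\ref{lemma:kernel} identifies $\textup{Div}_\Lambda/I_n\textup{Div}_\Lambda$ with $\textup{Div}(X_n)$, and the image of $\textup{Pr}_\Lambda$ under this identification is generated by the images of the $p_{i,0}$, hence coincides with $\textup{Pr}(X_n)$. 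Therefore $\textup{Pic}_\infty/I_n\textup{Pic}_\infty \cong \textup{Div}(X_n)/\textup{Pr}(X_n) = \textup{Pic}(X_n)$.

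Next, since $\textup{Pic}_\infty$ is a finitely generated $\Lambda$-module (being a quotient of the finitely generated free module $\textup{Div}_\Lambda$ by Corollary~\ref{cor:J_infty}), it is complete in the $I_n$-adic topology, and so $\textup{Pic}_\infty \cong \varprojlim_n \textup{Pic}_\infty/I_n\textup{Pic}_\infty \cong \varprojlim_n \textup{Pic}(X_n)$. For $J_\infty$ I would pass through the degree sequence rather than attempt to reduce $0 \to \textup{Pr}_\Lambda \to \textup{Div}^0_\Lambda \to J_\infty \to 0$ directly modulo $I_n$. Applying the snake lemma to the two sequences of Corollary~\ref{cor:J_infty}, with the identity on $\textup{Pr}_\Lambda$ and the inclusion $\textup{Div}^0_\Lambda \hookrightarrow \textup{Div}_\Lambda$, together with the identification $\textup{Div}_\Lambda/\textup{Div}^0_\Lambda \cong \Lambda/I_0 \cong \Z_p$, yields the exact sequence $0 \to J_\infty \to \textup{Pic}_\infty \to \Z_p \to 0$. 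The analogous finite-level degree sequence $0 \to J(X_n) \to \textup{Pic}(X_n) \to \Z_p \to 0$ has surjective transition maps (Lemma~\ref{lemma:surjective} gives surjectivity of all the relevant transitions), so Mittag--Leffler makes its inverse limit exact; comparing with the previous sequence via $\textup{Pic}_\infty \cong \varprojlim \textup{Pic}(X_n)$, the five lemma then delivers $J_\infty \cong \varprojlim J(X_n)$.

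The main technical point is the completeness of $\textup{Pic}_\infty$ in the $I_n$-adic topology. The $I_n$ are not comparable with the powers of the maximal ideal $\mathfrak{m}$ of $\Lambda$ (no power of $p$ lies in any $I_n$), so the standard Krull intersection argument does not apply directly. What one can invoke instead is that $\Lambda = \varprojlim_n \Z_p[G^{(n)}]$ is a compact topological ring and every finitely generated $\Lambda$-module carries a unique compact $\Lambda$-module topology: for any presentation $\Lambda^r \twoheadrightarrow M$ the kernel is closed (as the continuous image of a compact module), the submodules $I_n M$ form a fundamental system of neighborhoods of $0$, and $M \cong \varprojlim_n M/I_n M$ is then immediate.
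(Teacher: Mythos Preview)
Your argument is correct and follows a different route from the paper's. The paper works with $J_\infty$ directly: it uses the identification $J(X_n)\cong M_\Lambda/(\textup{Pr}_\Lambda+I_n\textup{Div}_\Lambda)$ from Lemma~\ref{lemma:J} together with $\varprojlim_n \textup{Pr}(X_n)\cong \textup{Pr}_\Lambda$ and $\varprojlim_n \textup{Div}^0(X_n)\cong M_\Lambda$ to see that the left-exact inverse-limit sequence $0\to \varprojlim \textup{Pr}(X_n)\to \varprojlim \textup{Div}^0(X_n)\to \varprojlim J(X_n)$ is also right exact, and then compares with Corollary~\ref{cor:J_infty}; the case of $\textup{Pic}_\infty$ is declared analogous. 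Your route---a control statement $\textup{Pic}_\infty/I_n\textup{Pic}_\infty\cong\textup{Pic}(X_n)$ plus completeness for $\textup{Pic}_\infty$, then the degree sequence with Mittag--Leffler and the five lemma for $J_\infty$---is cleaner in that it avoids the auxiliary module $M_\Lambda$ entirely and isolates a reusable mechanism (later reproved in the paper as Lemma~\ref{lemma:control}), while the paper's argument stays anchored in the explicit generators already set up in Section~\ref{section:algebraic_theory}.

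One small correction to your final paragraph: the submodules $I_nM$ do \emph{not} form a fundamental system of neighbourhoods of $0$ in the compact topology on a finitely generated $\Lambda$-module $M$, since $M/I_nM$ is a finitely generated $\Z_p[G^{(n)}]$-module and hence typically non-discrete (already $\Lambda/I_n\cong\Z_p[G^{(n)}]$ is not finite). The conclusion $M\cong\varprojlim_n M/I_nM$ is nonetheless correct: writing $0\to N\to\Lambda^r\to M\to 0$ with $N$ finitely generated, hence compact, hence closed, one has $\bigcap_n(N+I_n\Lambda^r)=N$ by a straightforward compactness argument, and $\varprojlim_n \Lambda^r/I_n\Lambda^r=\Lambda^r$ by the very definition of the completed group ring; Mittag--Leffler then yields the claim. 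So your strategy goes through, only the justification of the completeness step needs this adjustment.
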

\begin{proof}
    We have an exact sequence
    \[0\longrightarrow \varprojlim_n \textup{Pr}(X_n)\longrightarrow \varprojlim_n \textup{Div}^0(X_n)\longrightarrow \varprojlim_n J(X_n).\]
    Note that 
    \begin{itemize}
        \item $\varprojlim_n J(X_n)\cong \varprojlim_n M_\Lambda/(\textup{Pr}_\Lambda+I_n\textup{Div}_\Lambda)\cong M_\Lambda/\textup{Pr}_\Lambda$ by Lemma \ref{lemma:J}.
        \item $\varprojlim_n \textup{Pr}(X_n)\cong \textup{Pr}_\Lambda$ as each $\textup{Pr}(X_n)$ (and $\textup{Pr}_\Lambda$) is generated by the $p_{i,0}$ as a $\Z_p[G^{(n)}]$-module (as a $\Lambda$-module).
        \item $\varprojlim_n \textup{Div}^0(X_n)\subseteq \textup{Div}_\Lambda$ coincides with $M_\Lambda \cong\textup{Div}^0_\Lambda$ in view of Lemma~\ref{lemma:iso}.  
    \end{itemize}
    Using these three observations we see that the above sequence is indeed right exact. Using the previous corollary gives the first claim. The proof for $\textup{Pic}_\infty$ works analogously. 
\end{proof}
\begin{corollary} \label{cor:inclusion} 
    We have a natural injection $J_\infty\hookrightarrow \textup{Pic}_\infty$.
\end{corollary}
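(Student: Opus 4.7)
The plan is to deduce the injectivity directly from a diagram chase, using only Corollary~\ref{cor:J_infty} and Lemma~\ref{lemma:iso}. Corollary~\ref{cor:J_infty} gives the two short exact sequences
\[ 0 \longrightarrow \textup{Pr}_\Lambda \longrightarrow \textup{Div}^0_\Lambda \longrightarrow J_\infty \longrightarrow 0 \quad \text{and} \quad 0 \longrightarrow \textup{Pr}_\Lambda \longrightarrow \textup{Div}_\Lambda \longrightarrow \textup{Pic}_\infty \longrightarrow 0, \]
and Lemma~\ref{lemma:iso} identifies $\textup{Div}^0_\Lambda$ with the submodule $M_\Lambda$ of $\textup{Div}_\Lambda$. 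In particular the natural map $\textup{Div}^0_\Lambda \longrightarrow \textup{Div}_\Lambda$ is injective, and since the generators $p_{i,0}$ of $\textup{Pr}_\Lambda$ all lie in $M_\Lambda$, the two copies of $\textup{Pr}_\Lambda$ appearing in the sequences above are compatibly identified.

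First, I would assemble the commutative diagram
\[ \xymatrix{ 0 \ar[r] & \textup{Pr}_\Lambda \ar[r] \ar@{=}[d] & \textup{Div}^0_\Lambda \ar[r] \ar@{^{(}->}[d] & J_\infty \ar[r] \ar[d] & 0 \\ 0 \ar[r] & \textup{Pr}_\Lambda \ar[r] & \textup{Div}_\Lambda \ar[r] & \textup{Pic}_\infty \ar[r] & 0} \]
with exact rows, where the middle vertical arrow is the inclusion provided by Lemma~\ref{lemma:iso} and the right vertical arrow is the induced map on cokernels. Then I would invoke the snake lemma: the kernel of $J_\infty \longrightarrow \textup{Pic}_\infty$ is sandwiched between the cokernel of the identity $\textup{Pr}_\Lambda \longrightarrow \textup{Pr}_\Lambda$ (which is zero) and an object already controlled, so the natural map $J_\infty \longrightarrow \textup{Pic}_\infty$ must be injective.

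Alternatively, one can argue directly without the snake lemma: the kernel of $\textup{Div}^0_\Lambda/\textup{Pr}_\Lambda \longrightarrow \textup{Div}_\Lambda/\textup{Pr}_\Lambda$ equals $(\textup{Div}^0_\Lambda \cap \textup{Pr}_\Lambda)/\textup{Pr}_\Lambda$ inside $\textup{Div}_\Lambda/\textup{Pr}_\Lambda$, and since $\textup{Pr}_\Lambda \subseteq \textup{Div}^0_\Lambda$ under the identification from Lemma~\ref{lemma:iso}, this quotient vanishes. Either way, no real obstacle arises; the statement is a formal consequence of the two preceding results, and the only thing worth checking carefully is that the embedding of $\textup{Pr}_\Lambda$ into $\textup{Div}_\Lambda$ coming from Lemma~\ref{lem:free} agrees, via the identification of Lemma~\ref{lemma:iso}, with the embedding into $\textup{Div}^0_\Lambda$ — which is immediate from the definitions of the generators $p_{i,0}$.
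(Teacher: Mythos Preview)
Your argument is correct, but it takes a different route from the paper's. The paper proves the corollary by observing that at each finite level one has the tautological injection $J(X_n)\hookrightarrow \textup{Pic}(X_n)$ (since $\textup{Div}^0(X_n)\subseteq \textup{Div}(X_n)$ and both are quotiented by the same $\textup{Pr}(X_n)$), then passes to the projective limit (which is left exact) and invokes Lemma~\ref{tensor-proj-lim} to identify $\varprojlim_n J(X_n)$ and $\varprojlim_n \textup{Pic}(X_n)$ with $J_\infty$ and $\textup{Pic}_\infty$. Your approach bypasses Lemma~\ref{tensor-proj-lim} altogether and works directly at the $\Lambda$-level with the two exact sequences of Corollary~\ref{cor:J_infty}; the injectivity then falls out of the snake lemma (or the elementary observation that $\textup{Pr}_\Lambda\subseteq \textup{Div}^0_\Lambda\subseteq \textup{Div}_\Lambda$ forces $\textup{Div}^0_\Lambda/\textup{Pr}_\Lambda\hookrightarrow \textup{Div}_\Lambda/\textup{Pr}_\Lambda$). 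Your compatibility check on the two embeddings of $\textup{Pr}_\Lambda$ is exactly the point already handled in the proof of Corollary~\ref{cor:J_infty}, so no extra work is needed. The upshot: your argument is marginally more self-contained (it could be placed immediately after Corollary~\ref{cor:J_infty}), while the paper's version makes clearer that the injection is the one induced by the obvious finite-level inclusions. One small quibble: your description of the snake lemma sequence is slightly garbled---the kernel of $J_\infty\to\textup{Pic}_\infty$ receives a map from $\ker(\textup{Div}^0_\Lambda\to\textup{Div}_\Lambda)=0$ and maps to $\textup{coker}(\textup{id}_{\textup{Pr}_\Lambda})=0$, so it vanishes---but the conclusion is right and your alternative direct argument is cleaner anyway.
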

\begin{proof}
    For each finite level $n$ we have a natural injection $J(X_n)\hookrightarrow \textup{Pic}(X_n)$. Thus, we have an injection
    \[\varprojlim_nJ(X_n)\hookrightarrow \varprojlim_n\textup{Pic}(X_n).\]
    The claim now follows from Lemma \ref{tensor-proj-lim}.
\end{proof}

\begin{thm} \label{thm:no-pseudo-null} 
   $\textup{Pic}_\infty$ and $J_\infty$ are $\Lambda$-torsion modules which do not contain any non-trivial pseudo-null $\Lambda$-submodules. 
\end{thm}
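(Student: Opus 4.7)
The plan is to deduce both conclusions from the two short exact sequences of Corollary~\ref{cor:J_infty},
\[0 \longrightarrow \textup{Pr}_\Lambda \longrightarrow \textup{Div}_\Lambda \longrightarrow \textup{Pic}_\infty \longrightarrow 0, \qquad 0 \longrightarrow \textup{Pr}_\Lambda \longrightarrow \textup{Div}^0_\Lambda \longrightarrow J_\infty \longrightarrow 0,\]
together with the injection $J_\infty \hookrightarrow \textup{Pic}_\infty$ provided by Corollary~\ref{cor:inclusion}. First I would settle the torsion assertion: by Lemma~\ref{lem:free}, $\textup{Pr}_\Lambda$ is $\Lambda$-free on the generators $p_{i,0}$, which are parametrised by $V(X)$, whereas $\textup{Div}_\Lambda$ is tautologically $\Lambda$-free on $V(X)$ as well. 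Hence the two outer terms of the first sequence are free of the same rank $|V(X)|$, which forces $\textup{rank}_\Lambda(\textup{Pic}_\infty) = 0$. Consequently, $J_\infty$ is also $\Lambda$-torsion via its embedding into $\textup{Pic}_\infty$.

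The key observation for the pseudo-null claim is that the first sequence above is a free resolution of $\textup{Pic}_\infty$ of length one, so $\textup{Pic}_\infty$ has projective dimension at most one over $\Lambda$, and in particular $\textup{Ext}^i_\Lambda(\textup{Pic}_\infty, \Lambda) = 0$ for every $i \ge 2$. Because $\Lambda$ is Auslander regular without zero divisors (see Section~\ref{section:notation}), I would then invoke the general non-commutative purity principle that any finitely generated $\Lambda$-torsion module of projective dimension $\le 1$ is pure of grade one and therefore contains no non-trivial pseudo-null submodules. This is the non-commutative analogue of the classical fact that, over a commutative regular local ring, a torsion module admitting a length-one free resolution is Cohen--Macaulay of codimension one, and it follows from Venjakob's structure theory of Iwasawa algebras via the canonical filtration attached to the Auslander condition (compare \cite{venjakob}). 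For $J_\infty$ the statement is then automatic: any non-trivial pseudo-null $\Lambda$-submodule of $J_\infty$ would, through Corollary~\ref{cor:inclusion}, give rise to a non-trivial pseudo-null $\Lambda$-submodule of $\textup{Pic}_\infty$.

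The main obstacle is pinning down the non-commutative purity principle precisely in the form required. The commutative prototype is transparent via associated primes, but in our setting the argument must run through the Auslander condition and the canonical $T_j$-filtration, which makes the appropriate reference slightly more delicate to quote. If a clean citation turns out to be inconvenient, one can unwind the argument by hand: for a hypothetical non-trivial pseudo-null submodule $N \subseteq \textup{Pic}_\infty$ with preimage $\widetilde N \subseteq \textup{Div}_\Lambda$, dualising the short exact sequence $0 \to \textup{Pr}_\Lambda \to \widetilde N \to N \to 0$ against $\Lambda$ and combining the vanishing $\textup{Ext}^0_\Lambda(N, \Lambda) = \textup{Ext}^1_\Lambda(N, \Lambda) = 0$ (definition of pseudo-null) with $\textup{Ext}^{\ge 1}_\Lambda(\textup{Pr}_\Lambda, \Lambda) = 0$ (freeness) shows that $\textup{Hom}_\Lambda(\widetilde N, \Lambda)$ is free of the same rank as $\textup{Pr}_\Lambda$, and a further round of Auslander-regularity bookkeeping, using that $\widetilde N$ is a submodule of the free module $\textup{Div}_\Lambda$ and hence torsion-free, forces $\widetilde N = \textup{Pr}_\Lambda$ and thus $N = 0$.
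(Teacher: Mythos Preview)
Your proposal is correct and follows essentially the same route as the paper: derive the torsion statement from the equality of ranks in the free resolution $0 \to \textup{Pr}_\Lambda \to \textup{Div}_\Lambda \to \textup{Pic}_\infty \to 0$, conclude that $\textup{Pic}_\infty$ has projective dimension at most one, appeal to the Auslander-regular purity principle to rule out pseudo-null submodules, and then handle $J_\infty$ via the inclusion of Corollary~\ref{cor:inclusion}. The paper makes the purity step precise by citing \cite[Theorem~2.8 and Proposition~2.7]{ochi-venjakob2} rather than \cite{venjakob}, which resolves the reference issue you flagged; your alternative hands-on argument is then unnecessary.
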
 
\begin{proof} 
   In view of Lemma~\ref{lem:free} we have an exact sequence of the form 
   \[ 0 \longrightarrow \Lambda^a \longrightarrow \Lambda^a \longrightarrow \textup{Pic}_\infty \longrightarrow 0 \] 
   with ${a = |X|= |X_0|}$. Therefore $\textup{Pic}_\infty$ is $\Lambda$-torsion in view of the additivity of ranks on short exact sequences. 
   
   Note that every free $\Lambda$-module is projective. Therefore the projective dimension of $\textup{Pic}_\infty$ is at most one, and \cite[Theorem~2.8 and Proposition~2.7]{ochi-venjakob2} imply that this module does not contain any non-trivial pseudo-null submodules. The assertion for $J_\infty$ follows in view of Corollary~\ref{cor:inclusion}. 
\end{proof}

\section{Ihara L-functions and reduced norms} \label{section:Ihara-L-functions} 
In this section we will introduce the Ihara zeta-function of finite graphs $X$. If $Y/X$ is a Galois covering one can express the Ihara L-zeta function of $Y$ as a product of the Ihara zeta-function of $X$ and certain L-function. If $Y$ is the derived graph of a voltage assignment $\alpha$ these L-functions can be expressed in terms of irreducible representations of $\Gal(Y/X)$ and the voltage assignment $\alpha$. We will summarize these relations below.

For the remainder of this section let $H$ be a finite group and let $Y/X$ be a Galois cover of finite connected graphs with group $H$. Let $A$ be the adjacency matrix of $X$ and let $D$ be the degree matrix. This means that $A = (a_{ij})$ with 
\[ a_{ij} = \begin{cases} 
   \textup{twice the number of undirected loops at $v_i$} & i = j, \\ 
   |E(v_i, v_j)| & i \ne j, 
\end{cases} \] 
and that $D$ is the diagonal matrix with entries $\deg(v_i)$, ${1 \le i \le m}$, where we recall that ${V(X) = \{v_1, \ldots, v_m\}}$.

The Ihara zeta function attached to $X$ is defined by 
\[\zeta_X(u)^{-1}=(1-u^2)^{-\chi(X)}\det(I-Au+(D-I)u^2),\]
where ${\chi(X) = |V(X)| - |E(X)|}$ is the Euler characteristic of $X$. 
Let $\rho$ be an irreducible representation of $H$ of dimension $d$. Let $L/\Q_p$ be a finite extension which is generated by the values of the character of $\rho$. For every $\sigma\in H$ define the matrix $A(\sigma)$ given by 
\[a_{i,j}(\sigma)=\begin{cases}
    2\vert \{\textup{loops at } (v_i,1)\}\vert & \textup{if } i=j,\sigma=1, \\
    \vert \{ \textup{edges from $(v_i,1)$ to $(v_j,\sigma)$}\}\vert & \textup{otherwise}.
\end{cases}\]
Define $A_\rho=\sum_{\sigma\in H}A(\sigma)\otimes \rho(\sigma)\in Mat_{md \times md}(L)$ and let $D_\rho=D\otimes \textup{Id}_d$. Then we define
\[L(\rho,Y/X,u)^{-1}=(1-u^2)^{-d\chi(X)}\det(I-A_\rho u+(D_\rho-I)u^2).\] 
Let \[h(\rho,X/Y,u)=\det(I-A_\rho u-(I-D_\rho)u^2).\] 

Now suppose that $Y/X$ is the derived graph of a voltage assignment $\alpha$. Define $A_\alpha$ as the following matrix
\[\alpha_{i,j}=\begin{cases}
    \sum _{\{ e_k\in E(v_i,v_j)\}}\alpha(e_k) & \textup{if } i\neq j, \\
    
    \sum_{\{ e_k\in E(v_i,v_i)\}}\alpha(e_k)+\alpha(e_k)^{-1} & \textup{if } i=j. 
\end{cases}\] 
Let $K$ be a finite extension of $\Q_p$. Then $\rho$ induces an algebra homomorphism
\[\rho \colon \textup{Mat}_{m\times m}(K[H])\longrightarrow \textup{Mat}_{md\times md}(K)\]
which is given by evaluating the matrix entries $m_{i,j}$ at $\rho$. 

\begin{lemma} We have 
    \[\rho(D-A_\alpha)=D_\rho-A_\rho.\]
\end{lemma}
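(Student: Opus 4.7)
The plan is to verify the identity entry by entry. Since $\rho$ is an algebra homomorphism, it is additive, so it suffices to prove separately that $\rho(D)=D_\rho$ and $\rho(A_\alpha)=A_\rho$. The first equality is almost tautological: the matrix $D$ has scalar entries $\deg(v_i)\in\Z$ on the diagonal and zero elsewhere, so applying $\rho$ entry-wise produces the block-diagonal matrix with blocks $\deg(v_i)\cdot I_d$, which is exactly $D\otimes\textup{Id}_d=D_\rho$.

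For the second equality I would compare the $(i,j)$-block on each side. On the one hand, the $(i,j)$-block of $A_\rho=\sum_{\sigma\in H}A(\sigma)\otimes\rho(\sigma)$ is the $d\times d$ matrix $\sum_{\sigma\in H}a_{i,j}(\sigma)\,\rho(\sigma)$. On the other hand, since the $(i,j)$-entry of $A_\alpha$ lies in $\Z_p[H]$, applying $\rho$ entry-wise produces $\rho\bigl((A_\alpha)_{i,j}\bigr)$. So the task reduces to matching, for each pair $(i,j)$, the coefficients of $\rho(\sigma)$ on both sides.

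For $i\ne j$, the definition of the derived graph gives a bijection between directed edges from $(v_i,1)$ to $(v_j,\sigma)$ in $Y$ and edges $e_k\in E(v_i,v_j)$ in $X$ with $\alpha(e_k)=\sigma$; hence $a_{i,j}(\sigma)=|\{e_k\in E(v_i,v_j):\alpha(e_k)=\sigma\}|$, and grouping the sum $\sum_\sigma a_{i,j}(\sigma)\rho(\sigma)$ by the underlying edges yields $\sum_{e_k\in E(v_i,v_j)}\rho(\alpha(e_k))=\rho((A_\alpha)_{i,j})$.

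The only subtle point, and the step I expect to require the most care, is the diagonal case $i=j$, because of the factor-of-two convention for undirected loops. Each undirected loop $e_k\in E(v_i,v_i)$ of $X$ lifts in the derived graph to two directed half-edges at $(v_i,1)$, one ending at $(v_i,\alpha(e_k))$ and one at $(v_i,\alpha(e_k)^{-1})$; when $\alpha(e_k)=1$ these collapse into a genuine loop at $(v_i,1)$, which by convention contributes $2$ to $a_{i,i}(1)$. In either case the total contribution of $e_k$ to $\sum_{\sigma}a_{i,i}(\sigma)\rho(\sigma)$ is $\rho(\alpha(e_k))+\rho(\alpha(e_k)^{-1})$, and summing over the loops gives precisely $\rho\bigl((A_\alpha)_{i,i}\bigr)$ by the defining formula $(A_\alpha)_{i,i}=\sum_{e_k\in E(v_i,v_i)}(\alpha(e_k)+\alpha(e_k)^{-1})$. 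This completes the comparison and hence the proof.
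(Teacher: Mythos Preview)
Your proof is correct and follows essentially the same route as the paper's: split into $\rho(D)=D_\rho$ and $\rho(A_\alpha)=A_\rho$, compare block entries, and handle the off-diagonal and diagonal cases separately with the loop convention accounted for. The paper's argument is identical in structure and detail.
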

\begin{proof}  We can understand both matrices $D_\rho$ and $A_\rho$ as $m\times m$ matrices whose entries are $d\times d$ matrices. 
Note that for $x\in K$ we have $\rho(x)=x\textup{Id}_d$. Thus, $\rho(D)=D_\rho$. It remains to show that $\rho(A_\alpha)=A_\rho$. Assume first that $i\neq j$. Then 
\begin{align*}\rho(\alpha_{i,j})&=\sum _{\{ e_k\in E(v_i,v_j)\}}\rho(\alpha(e_k))\\
&=\sum_{\sigma \in H}\vert\{\textup{ edges from $(v_i,1)$ to $(v_j,\sigma)$}\}\vert\rho(\sigma)\\
&=\sum_{\sigma\in H} a_{i,j}(\sigma)\rho(\sigma)\end{align*}
If $i=j$ we have 
\begin{align*}
    \rho(\alpha_{i,i})&=\rho(\sum_{\{e_k \in E(v_i,v_i)\}} \alpha(e_k)+ \alpha(e_k)^{-1}) \\ 
    &=\sum_{\sigma \in H,\sigma \neq 1}\vert \{\textup{edges from $(v_i,1)$ to $(v_i,\sigma)$}\}\vert \rho(\sigma)+\rho(1) 2\vert \{\textup{loops at $(v_i,1)$}\}\vert\\
    &=\sum_{\sigma \in H}a_{i,j}(\sigma)\rho(\sigma),
\end{align*} which concludes the claim.
\end{proof}
As an immediate corollary we obtain
\begin{corollary}
\label{cor:interpolation}
    \[h(\rho,X/Y,1)=\det(\rho(D-A_\alpha^t)).\]
\end{corollary}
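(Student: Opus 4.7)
The plan is to evaluate $h(\rho, X/Y, u)$ at $u = 1$ and read off the answer using the preceding lemma. By the defining formula
\[ h(\rho, X/Y, u) = \det(I - A_\rho u - (I - D_\rho) u^2), \]
substituting $u = 1$ collapses the expression to $\det(D_\rho - A_\rho)$. The preceding lemma identifies $D_\rho - A_\rho$ with $\rho(D - A_\alpha)$, so taking determinants yields
\[ h(\rho, X/Y, 1) = \det(\rho(D - A_\alpha)). \]

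To reach the form $\det(\rho(D - A_\alpha^t))$ as stated, I would argue as follows. Since $X$ is undirected and the voltage assignment is extended to reverse edges via $\alpha(\bar e) = \alpha(e)^{-1}$, the matrix $A_\alpha$ is self-adjoint under the natural anti-involution $M \mapsto \overline{M^t}$ on $\textup{Mat}_m(K[H])$, with $\overline{\,\cdot\,}$ denoting the entry-wise group inversion $g \mapsto g^{-1}$. Combined with invariance of the ordinary $md \times md$ determinant under transposition, this yields $\det(\rho(D - A_\alpha)) = \det(\rho(D - A_\alpha^t))$.

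The main obstacle is this second step: the map $\rho$ applied entry-wise to matrices over $K[H]$ does not commute straightforwardly with either the $m \times m$ block transpose (which swaps block positions) or the full $md \times md$ matrix transpose (which additionally transposes each block internally). Unwinding how $\rho$ interacts with these two levels of matrix structure and using the self-adjointness of $A_\alpha$, one reduces the identity to the classical fact $\det(N) = \det(N^T)$ for $N \in \textup{Mat}_{md}(K)$; beyond this bookkeeping, the corollary is a direct consequence of the lemma.
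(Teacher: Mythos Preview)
Your derivation of $h(\rho,X/Y,1)=\det(D_\rho-A_\rho)=\det(\rho(D-A_\alpha))$ by substituting $u=1$ and invoking the preceding lemma is exactly the paper's argument: the corollary is recorded there as ``immediate'' with no proof, and the subsequent remark simply asserts that the versions with $A_\alpha^t$ and with $A_\alpha$ are both correct, offering no justification for the passage between them.

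Where your proposal goes beyond the paper is in attempting to pass from $\det(\rho(D-A_\alpha))$ to $\det(\rho(D-A_\alpha^t))$, and here there is a real gap in what you have written. The matrix $\rho(D-A_\alpha^t)$ is the \emph{block} transpose of $\rho(D-A_\alpha)$: its $(i,j)$-block equals the $(j,i)$-block of the latter, with no internal transposition of the $d\times d$ blocks. Block transpose is neither an automorphism nor an anti-automorphism of $M_{md}(K)$ once $d>1$, and it does not preserve determinants of arbitrary elements. Your self-adjointness observation $\overline{A_\alpha^t}=A_\alpha$ translates, after applying $\rho$, into a relation involving $\rho(g)^{-1}$ rather than $\rho(g)^T$; the involution $g\mapsto g^{-1}$ on $K[H]$ and the transpose on $M_d(K)$ simply do not intertwine under $\rho$ unless $\rho$ happens to be orthogonal. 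So the promised reduction to the classical fact $\det(N)=\det(N^T)$ does not go through as you sketch it --- the ``bookkeeping'' hides a genuine issue, not just notation. The equality is true, but it needs more than what you indicate; the paper itself leaves this step unaddressed.
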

\begin{rem}
We consider $D-A_\alpha^t$ here as this is the matrix that will occur later in the description of the Picard groups of our graphs. The above statement is also correct without the transposed.
\end{rem}
So if we interpret $h(\rho,X/Y,u)$ as the algebraic part of the Ihara $L$-function, then in some sense $D-A_\alpha^t$ interpolates the values at $u=1$. We will make this notion a little more precise in the following.

\begin{def1} Let $L$ be any field of characteristic zero, and recall that $H$ is a finite group. Then $L[H]=\bigoplus_{i=1}^t \textup{Mat}_{s_i\times s_i}(D_i)$ for skew fields $D_i$. Let $F_i$ be the center of $D_i$ and write $Z(L[H])$ for the center of $L[H]$. We define the \emph{reduced norm} 
\[\textup{Nrd}\colon L[H]\longrightarrow Z(L[H])=\bigoplus_{i=1}^tF_i\] as in \cite[§~7D]{curtis-reiner1}. If each of the $D_i$ is actually a field, then the reduced norm coincides with the determinant componentwise. Note: in our applications, $H$ will be a finite $p$-group and therefore 
\[ Z(L[H]) \cong \bigoplus_{\chi / \sim} L(\chi) \]
for any local field $L$ (see \cite{roquette}), where $L(\chi)$ denotes the extension generated by the values $\chi(h)$, ${h \in H}$. Here the sum runs over all characters of irreducible representations of $H$, where the representations are counted modulo isomorphy. 

The reduced norm does not map ${\Z_p[H] \subseteq \Q_p[H]}$ to the center of $\Z_p[H]$, but to the center of a maximal $\Z_p$-order ${\Lambda' \subseteq \Q_p[H]}$. $Z(\Lambda')$ is isomorphic to $\bigoplus_{\chi/\sim} \Z_p(\chi)$. 
\end{def1} 

Let $K/\Q_p$ be a splitting field of $\Q_p[H]$, i.e. a finite extension such that 
\begin{align} \label{eq:wedderburn} K[H]\cong \bigoplus_{i=1}^s \textup{Mat}_{d_i\times d_i}(K), \end{align} 
for suitable integers $d_i$ (by the above, for a finite $p$-group $H$ it will suffice to consider an extension field $K$ of $\Q_p$ which contains the values of all the characters $\chi$). 

If $\rho$ is an irreducible representation of $H$, then $K[H]/\ker(\rho)$ is a simple $K[H]$-algebra, i.e. there exists an index $i$ such that ${K[H]/\ker(\rho)=\textup{Mat}_{d_i\times d_i}(K)}$ and $\rho$ is isomorphic to the representation given by taking the quotient $$K[H]\longrightarrow K[H]/\ker(\rho).$$ 
As the projections to the different components are pairwise non-isomorphic the index $i$ is unique. Note that in this case $\dim(\rho)=d_i$. We will say that $i$ is the index corresponding to $\rho$.  
Note furthermore that $K$ embeds naturally into $\textup{Mat}_{d_i \times d_i}(K)$ and we will always understand $\textup{Nrd}$ to take values in $K[H]$ via this embedding and the isomorphism~\eqref{eq:wedderburn}. If ${x\in \Z_p[H]}$ then the value $\textup{Nrd}(x)$ does not depend on the choice of $K$ but it does not necessarily land in ${\bigoplus_i \Z_p}$. For most of our computations it will be easier to work over a field $K$ such that we have a decomposition as in \eqref{eq:wedderburn}. Later on we will still interpret the reduced norm as taking values in the center of $\Q_p[H]$. The following lemma explains the relation between the determinant and the reduced  norm.
\begin{lemma}
\label{lemma-interpolation}
    \[\det(\rho(D-A_\alpha^t))I_d=\rho(\textup{Nrd}(D-A_\alpha^t)).\]
\end{lemma}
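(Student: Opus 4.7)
The plan is to verify the identity by unpacking the Wedderburn decomposition over a splitting field. Fix a finite extension $K/\Q_p$ large enough that
\[ K[H] \cong \bigoplus_{i=1}^s \textup{Mat}_{d_i \times d_i}(K), \]
and let $i_0$ be the index corresponding to $\rho$, so that $d=d_{i_0}$ and $\rho$ is (up to isomorphism) the projection onto the $i_0$-th factor, extended $K$-linearly from $H$ to $K[H]$.

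First I would extend the Wedderburn decomposition entrywise to matrix rings, using $\textup{Mat}_m(\textup{Mat}_{d_i}(K))\cong\textup{Mat}_{md_i}(K)$, to obtain
\[ \textup{Mat}_{m \times m}(K[H]) \cong \bigoplus_{i=1}^s \textup{Mat}_{m d_i \times m d_i}(K). \]
Under this isomorphism any $M\in\textup{Mat}_{m\times m}(K[H])$ corresponds to a tuple $(M_1,\ldots,M_s)$, where $M_i$ is obtained from $M$ by replacing each entry by its image under the projection to the $i$-th Wedderburn factor. Applied to $M=D-A_\alpha^t$, the component $M_{i_0}$ is by construction precisely the matrix $\rho(M)\in\textup{Mat}_{md\times md}(K)$ obtained by evaluating each entry of $M$ at $\rho$.

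Next I would invoke the definition of the reduced norm from \cite[\S 7D]{curtis-reiner1}: on $\textup{Mat}_{m\times m}(K[H])$ (with $K$ a splitting field), $\textup{Nrd}$ is simply the tuple of determinants $(\det M_1,\ldots,\det M_s)\in Z(K[H])\cong\bigoplus_i K$. Hence the $i_0$-th coordinate of $\textup{Nrd}(M)$ equals $\det(\rho(M))\in K$. Finally, the embedding $Z(K[H])\hookrightarrow K[H]$ in the $i_0$-th slot sends a scalar $\lambda\in K$ to $\lambda\cdot I_d$ inside $\textup{Mat}_{d\times d}(K)$, so projecting by $\rho$ yields $\rho(\textup{Nrd}(M))=\det(\rho(M))\cdot I_d$, which is the claim.

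There is no real obstacle here; the only thing to be careful about is the convention by which $\rho$ is extended to $Z(K[H])$ (via the Wedderburn projection, zero on the other factors) and the corresponding interpretation of $\textup{Nrd}(M)$ as an element of $K[H]$ rather than just of $Z(K[H])$. Once this bookkeeping is pinned down, the identity is essentially a tautology. Independence of the identity from the choice of $K$, and hence its validity when we interpret $\textup{Nrd}$ as taking values in $Z(\Q_p[H])$, follows from the standard fact that the reduced norm is defined over the base field \cite[\S 7D]{curtis-reiner1}.
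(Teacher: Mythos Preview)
Your proposal is correct and follows essentially the same approach as the paper: both decompose $D-A_\alpha^t$ via the Wedderburn decomposition of $K[H]$ over a splitting field, identify the $i_0$-th component with $\rho(D-A_\alpha^t)$, and use that over a splitting field the reduced norm is the tuple of componentwise determinants, so that projecting to the $i_0$-th factor (and embedding scalars as $\lambda I_d$) yields the identity. Your write-up is simply more explicit about the bookkeeping (the scalar embedding $K\hookrightarrow\textup{Mat}_{d\times d}(K)$ and independence of $K$) than the paper's one-line proof.
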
 
\begin{proof}
    Write $D-A_\alpha^t\in \textup{Mat}_{m\times m}(K[H])$ as $\sum M_j$, where $M_j\in \textup{Mat}_{d_jm\times d_jm}(K)$. Let $i$ be the index corresponding to $\rho$. Then 
    \[\det(\rho(D-A_\alpha^t))I_{d_i}=\det(M_i)I_{d_i}=\rho(\textup{Nrd}(\sum M_j))=\rho(\textup{Nrd}(D-A_\alpha^t)).\]
\end{proof} 

\begin{lemma} \label{lemma:blockdets} 
  Let ${B \in \textup{Mat}_{b \times b}(\Z_p[H])}$ be a matrix of the form 
  \[ B = \begin{pmatrix} C & 0 \\ \star & a \end{pmatrix}, \] 
  where ${a \in \Z_p[H]}$ and $C$ is a ${(b-1) \times (b-1)}$-matrix. 

  Then ${\textup{Nrd}(B) = \textup{Nrd}(C) \cdot \textup{Nrd}(a)}$. 
\end{lemma}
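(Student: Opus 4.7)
The plan is to reduce the statement to the standard fact that the determinant of a block lower triangular matrix over a field equals the product of the determinants of the diagonal blocks, by unraveling the Wedderburn decomposition used to define $\textup{Nrd}$.

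First, I would pass to a splitting field $K/\Q_p$ of $\Q_p[H]$, so that by \eqref{eq:wedderburn} we have an isomorphism $K[H]\cong \bigoplus_{i=1}^s \textup{Mat}_{d_i\times d_i}(K)$, which we write entrywise as $x\mapsto (x_1,\ldots,x_s)$. Applied entry by entry, this induces a ring isomorphism
\[ \textup{Mat}_{b\times b}(K[H]) \;\cong\; \bigoplus_{i=1}^{s} \textup{Mat}_{b\times b}\bigl(\textup{Mat}_{d_i\times d_i}(K)\bigr) \;\cong\; \bigoplus_{i=1}^{s} \textup{Mat}_{bd_i\times bd_i}(K), \]
under which a matrix $M\in\textup{Mat}_{b\times b}(K[H])$ corresponds to the tuple $(M_1,\ldots,M_s)$, where each $M_i$ is obtained by replacing each entry $m_{jk}$ of $M$ with its $i$-th component $(m_{jk})_i\in\textup{Mat}_{d_i\times d_i}(K)$ and reading the result as a matrix in $\textup{Mat}_{bd_i\times bd_i}(K)$. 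By the definition of the reduced norm extended to matrix rings, $\textup{Nrd}(M)$ is exactly the tuple $(\det M_1,\ldots,\det M_s)\in Z(K[H])$.

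Next, I would observe that the block lower triangular structure of $B$ is preserved in each component: the zero block in the upper right of $B$ becomes a zero block of $\textup{Mat}_{(b-1)d_i\times d_i}(K)$ in the $i$-th component, so that
\[ B_i \;=\; \begin{pmatrix} C_i & 0 \\ \star_i & a_i \end{pmatrix}, \]
with $C_i\in\textup{Mat}_{(b-1)d_i\times (b-1)d_i}(K)$ and $a_i\in\textup{Mat}_{d_i\times d_i}(K)$ the $i$-th components of $C$ and $a$ respectively. The classical determinant formula for block lower triangular matrices then gives $\det B_i=\det(C_i)\det(a_i)$ for each $i$.

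Finally, I would conclude by recognizing $\det(C_i)$ and $\det(a_i)$ as the $i$-th components of $\textup{Nrd}(C)$ and $\textup{Nrd}(a)$, so the componentwise identity $\det B_i=\det C_i\cdot \det a_i$ assembles to $\textup{Nrd}(B)=\textup{Nrd}(C)\cdot\textup{Nrd}(a)$ in $Z(K[H])$. As noted earlier, for inputs lying in $\Z_p[H]$ the value $\textup{Nrd}(\,\cdot\,)$ does not depend on the chosen splitting field, so this yields the desired equality inside $Z(\Q_p[H])$. There is no genuine obstacle here; the only careful point is the bookkeeping that identifies the $i$-th component of a matrix over $K[H]$ with a matrix over $K$ of the correct size and confirms that block triangularity is preserved under this identification.
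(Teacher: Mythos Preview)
Your proof is correct and is essentially the same argument as the paper's: the paper checks the identity after applying each irreducible representation $\rho$ and uses $\rho\circ\Nrd=\det\circ\rho$ together with the block-triangular determinant formula, which is exactly your componentwise computation in the Wedderburn decomposition (the index $i$ corresponding to $\rho$). The only cosmetic difference is that you phrase things in terms of Wedderburn components while the paper phrases them in terms of irreducible representations.
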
 
\begin{proof} 
  It suffices to prove that ${\rho(\Nrd(B)) = \rho(\Nrd(C)) \cdot \rho(\Nrd(a))}$ for each irreducible representation $\rho$ of $H$. As in the previous lemma we can show that 
  \[ \rho \circ \Nrd = \det \circ \rho.\] 
  Let ${\dim(\rho) = d}$. Then 
  \[ \rho(B) = \begin{pmatrix} \rho(C) & 0 \\ \star & \rho(a) \end{pmatrix},\] 
  where $\rho(C)$ is a ${d(b-1) \times d(b-1)}$-matrix and $\rho(a)$ is a ${d \times d}$-matrix with entries in some field $F$. Clearly, 
  \[ \det(\rho(B)) = \det(\rho(C)) \cdot \det(\rho(a)), \] 
  which implies the claim. 
\end{proof} 

\begin{lemma}
\label{compatibility-of-reduced-norms}
    Let $H'$ be a quotient of $H$. For $P\in \{H,H'\}$ let $\textup{Nrd}_P$ denote the reduced norm on $K[P]$. Let $\pi\colon K[H]\longrightarrow K[H']$ be the natural projection. Then
    \[\pi \circ \textup{Nrd}_H=\textup{Nrd}_{H'}\circ \pi.\]
\end{lemma}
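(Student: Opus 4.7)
The plan is to check the identity on both sides after applying every irreducible representation of $H'$. Since both $\pi(\textup{Nrd}_H(x))$ and $\textup{Nrd}_{H'}(\pi(x))$ lie in the center $Z(K[H'])$, and the center of a semisimple $K$-algebra is detected by its simple quotients, it suffices to prove that
\[ \rho'\bigl(\pi(\textup{Nrd}_H(x))\bigr) = \rho'\bigl(\textup{Nrd}_{H'}(\pi(x))\bigr) \]
for every irreducible representation $\rho'$ of $H'$.

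First, I would reduce to the case that $K$ is a splitting field for both $K[H]$ and $K[H']$. This is harmless because the reduced norm is compatible with scalar extension $K \subseteq K'$ (it is defined componentwise as a determinant after base change to a splitting field anyway, cf.\ \cite[§~7D]{curtis-reiner1}), and because the surjection $\pi$ is manifestly compatible with scalar extension. So from now on I may assume ${K[H] \cong \bigoplus_{i} \textup{Mat}_{d_i \times d_i}(K)}$ and similarly for $K[H']$, and apply Lemma~\ref{lemma-interpolation} (or rather the identity $\rho \circ \textup{Nrd} = \det \circ \, \rho$ extracted from its proof, which is valid for arbitrary elements, not just $D - A_\alpha^t$).

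Next, I would use the standard bijection between irreducible representations of $H'$ and those irreducible representations of $H$ that are trivial on ${\ker(\pi)}$: given an irreducible $\rho'$ of $H'$, set ${\rho := \rho' \circ \pi}$, which is an irreducible representation of $H$ of the same dimension $d$. On the one hand,
\[ \rho'\bigl(\textup{Nrd}_{H'}(\pi(x))\bigr) = \det\bigl(\rho'(\pi(x))\bigr) \cdot I_d = \det(\rho(x)) \cdot I_d.\]
On the other hand,
\[ \rho'\bigl(\pi(\textup{Nrd}_H(x))\bigr) = \rho(\textup{Nrd}_H(x)) = \det(\rho(x)) \cdot I_d, \]
and these two expressions agree.

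The only subtle point, and what I regard as the main (mild) obstacle, is making sure that the reduced norm really does commute with the base change step, so that the general statement can be deduced from the split case. This is why the argument is organised in terms of applying irreducible representations rather than manipulating Wedderburn components of $K[H]$ directly: working representation-theoretically, the bijection ${\rho' \leftrightarrow \rho' \circ \pi}$ between irreducibles of $H'$ and irreducibles of $H$ trivial on $\ker(\pi)$ makes the comparison of $\textup{Nrd}_H$ and $\textup{Nrd}_{H'}$ transparent, and the identity $\det \circ \rho = \rho \circ \textup{Nrd}$ (applied separately to $H$ and to $H'$) finishes the proof.
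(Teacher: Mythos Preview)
Your proposal is correct and takes essentially the same approach as the paper: both arguments identify the Wedderburn components (equivalently, irreducible representations) of $K[H']$ with the subset of those of $K[H]$ that factor through $\pi$, and then use that the reduced norm is computed componentwise as a determinant. The paper works directly with the Wedderburn decomposition (since $K$ is already assumed to be a splitting field in context) and leaves the final verification implicit, whereas you phrase the same argument via the identity $\rho \circ \textup{Nrd} = \det \circ\, \rho$ applied to each irreducible $\rho'$ of $H'$.
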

\begin{proof}
    Note that $\pi$ is a homomorphism of $K$-algebras. As $K[H]$ is semisimple the kernel of $\pi$ can be written as a direct sum of simple $K[H]$-subalgebras. The simple subalgebras are precisely the $\textup{Mat}_{d_i\times d_i}(K)$. Thus there is a finite subset $J\subseteq \{1,\dots ,s\}$ such that $K[H']\cong \bigoplus_{i\in J}\textup{Mat}_{d_i\times d_i}(K)$. Note that $J$ contains exactly the indices corresponding to representations that factor through $H'$. From this the claim is immediate. 
\end{proof}

\section{Basic properties of Fitting orders and Fitting invariants} \label{section:Fitting} 
In this section we introduce the definition and basic properties of Fitting orders and Fitting ideals. We mainly follow \cite[Section~2.1]{nickel} we give the following definitions. 
\begin{def1}
Let $\mathcal{O}$ be an integrally closed, noetherian, commutative, complete local domain with field of quotients $F$. Let $A$ be a finite dimensional separable $F$-algebra and let $\Lambda$ be an $\mathcal{O}$-order in $A$. Then we call $\Lambda$ a \emph{Fitting order} over $\Ok$.
\end{def1}
\begin{example}
Let $G$ be a finite group and let $\mathcal{O}$ be the ring of integers in a finite extension $K$ of $\Q_p$. Then $\mathcal{O}[G]$ is a Fitting order. 
\end{example}

Let $G$ be a uniform $p$-adic Lie-group. Then $\Z_p[G^{(n)}]$ is a Fitting order over $\Z_p$ for each ${n \in \N}$. For every $n$ we can choose a finite extension $K^{(n)}$ of $\Q_p$ such that $\Q_p[G^{(n)}]=\bigoplus_{i=1}^s\textup{Mat}_{d_i\times d_i}(K^{(n)})$ and ${K^{(n)}\subseteq K^{(n+1)}}$. Let ${K^{(\infty)}=\bigcup K^{(n)}}$. 

\begin{def1} 
 Let $\pi_n\colon \Z_p\llbracket G\rrbracket \longrightarrow \Z_p[G^{(n)}]$ be the natural projection.
    Let ${M\in \textup{Mat}_{d\times d}(\Z_p\llbracket G\rrbracket )}$. We define $\textup{Nrd}(M)\in Z(\Q_p\llbracket G\rrbracket )$ as the projective limit of the elements $\textup{Nrd}_{G^{(n)}}(\pi_n(M))$ (it follows from Lemma~\ref{compatibility-of-reduced-norms} that these elements indeed yield an element in ${\varprojlim_n Z(\Q_p[G^{(n)}])}$). 
\end{def1}

This definition might seem artificial and unnecessarily complicated. The reason that we define the reduced norm as a projective limit is that $\mathcal{O}\llbracket G\rrbracket $ is in general not a Fitting order:   
\begin{rem}
Let $G$ be a $p$-adic Lie group and let $Z$ be its center. Assume that $G/Z$ is infinite. The maximal subalgebra lying in the center of $\Z_p\llbracket G\rrbracket $ is $\Z_p\llbracket Z\rrbracket $. So, if we want to see $\Z_p\llbracket G\rrbracket $ as a Fitting order, the corresponding ring $\mathfrak{O}$ has to lie inside $\Z_p\llbracket Z\rrbracket $. Let $F$ be the quotient field of $\Z_p\llbracket Z\rrbracket $. If $\Z_p\llbracket G\rrbracket $ is a Fitting order then $\Z_p\llbracket G\rrbracket $ is contained in some finite-dimensional $F$-algebra. Let $t$ be a non-negative integer and let $h_1,\dots, h_t$ be elements in $G$ with distinct images  in $G/Z$. Such elements exist as $G/Z$ is infinite. Then $h_1,\dots, h_t$ are independent over $F$. As $t$ was arbitrary we see that $\Z_p\llbracket G\rrbracket $ cannot be contained in a finite-dimensional $F$-algebra.
\end{rem}

The definition using projective limits has furthermore the advantage that it is compatible with "interpolation" properties. 

\begin{lemma}
Let $X_\infty$ be the derived graph of a voltage assignment ${\alpha \colon S \longrightarrow G}$. Fix ${n \in \N}$, and let $D$ and $A_\alpha$ be attached to the Galois subcover $X_n/X$, as in Section~\ref{section:Ihara-L-functions}. Then $\textup{Nrd}(D-A_\alpha^t)$ interpolates $h(\rho,X_n/X,1)$ in the following sense. Let $\rho$ be a representation of dimension $d$ that factors through $G^{(n)}$. Then 
\[h(\rho,X_n/X,1)I_{d}=\rho(\textup{Nrd}(D-A_\alpha^t)). \]
\end{lemma}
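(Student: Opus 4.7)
The plan is to reduce the claim to the finite-level statement already proved (Lemma~\ref{lemma-interpolation} together with Corollary~\ref{cor:interpolation}) via the projective-limit definition of $\textup{Nrd}$ on $\Lambda$.

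First I would fix the representation $\rho$ and use the fact that it factors through $G^{(n)}$ to write $\rho = \bar\rho \circ \pi_n$ for a (faithful or at least well-defined) irreducible representation $\bar\rho$ of the finite quotient $G^{(n)}$. Under $\pi_n$, the matrix $D - A_\alpha^t \in \textup{Mat}_{m \times m}(\Lambda)$ is sent to the analogous matrix $D - A_{\alpha_n}^t \in \textup{Mat}_{m \times m}(\Z_p[G^{(n)}])$ attached to the finite Galois cover $X_n/X$; this is immediate from the compatibility of the voltage assignments $\alpha_n = \pi_n \circ \alpha$ on edges.

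Next, by the very definition of $\textup{Nrd}$ on $\Lambda$ as a projective limit (which uses Lemma~\ref{compatibility-of-reduced-norms} to see the limit is well defined), we have
\[ \pi_n\bigl(\textup{Nrd}(D-A_\alpha^t)\bigr) = \textup{Nrd}_{G^{(n)}}\bigl(\pi_n(D-A_\alpha^t)\bigr) = \textup{Nrd}_{G^{(n)}}(D - A_{\alpha_n}^t). \]
Applying $\bar\rho$ to both sides and using $\rho = \bar\rho \circ \pi_n$ gives
\[ \rho\bigl(\textup{Nrd}(D-A_\alpha^t)\bigr) = \bar\rho\bigl(\textup{Nrd}_{G^{(n)}}(D-A_{\alpha_n}^t)\bigr). \]

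Now I would invoke Lemma~\ref{lemma-interpolation} for the finite cover $X_n/X$ with group $H = G^{(n)}$ and the representation $\bar\rho$, yielding
\[ \bar\rho\bigl(\textup{Nrd}_{G^{(n)}}(D-A_{\alpha_n}^t)\bigr) = \det\bigl(\bar\rho(D-A_{\alpha_n}^t)\bigr)\, I_d. \]
Finally, Corollary~\ref{cor:interpolation} identifies the scalar $\det(\bar\rho(D-A_{\alpha_n}^t))$ with $h(\rho, X_n/X, 1)$, because $\bar\rho$ and $\rho$ yield the same value on matrices with entries in the group ring, and the algebraic part of the Artin--Ihara $L$-function at $u=1$ is computed from the same matrix.

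There is no real obstacle here: the statement is essentially a bookkeeping compatibility between the finite-level reduced norm identity and the projective-limit definition of $\textup{Nrd}$ on $\Lambda$. The only point to be careful about is to check that the projective-limit definition genuinely interacts with a single $\rho$ that factors through $G^{(n)}$ by truncating at level $n$, which is exactly what Lemma~\ref{compatibility-of-reduced-norms} guarantees.
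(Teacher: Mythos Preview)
Your proposal is correct and follows essentially the same approach as the paper: factor $\rho$ through $\Z_p[G^{(n)}]$, use the projective-limit definition of $\textup{Nrd}$ to reduce to $\rho(\textup{Nrd}(\pi_n(D-A_\alpha^t)))$, and then invoke Lemma~\ref{lemma-interpolation} together with Corollary~\ref{cor:interpolation}. Your write-up is somewhat more explicit about the intermediate identifications (e.g.\ $\pi_n(D-A_\alpha^t)=D-A_{\alpha_n}^t$), but the logical skeleton is identical to the paper's two-line proof.
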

\begin{proof}
  The representation $\rho$ induces a homomorphism $\Z_p\llbracket G\rrbracket \longrightarrow \textup{Mat}_{d\times d}(K^{(\infty)})$ that factors through $\Z_p[G^{(n)}]$. Therefore it suffices to determine 
  \[ \rho(\textup{Nrd}(\pi_n(D-A_\alpha^t))).\] 
  The claim now follows from Lemma \ref{lemma-interpolation} and Corollary~\ref{cor:interpolation}. 
\end{proof}

\begin{def1} Recall that we write $Z(R)$ for the center of any ring $R$. For each $n$ we define a $Z(\Z_p[G^{(n)}])$-submodule of ${\Q_p[G^{(n)}]}$ by 
\[ I(\Z_p[G^{(n)}]) = \langle \textup{Nrd}(H) \mid H\in \textup{Mat}_{b\times b}(\Z_p[G^{(n)}]), b \in \N \rangle_{Z(\Z_p[G^{(n)}])}. \] 
\end{def1} 
Note that $I(\Z_p[G^{(n)}])$ is actually a \emph{subring} of $\Q_p[G^{(n)}]$, ${n \in \N}$, which contains the center of $\Z_p[G^{(n)}]$. If $\Lambda'_n$ is a maximal $\Z_p$-order in $\Q_p[G^{(n)}]$ containing $\Z_p[G^{(n)}]$, then ${I(\Z_p[G^{(n)}])\subseteq Z(\Lambda_n')}=I(\Lambda_n')$ and the index of this inclusion is finite (as $\Lambda'_n$ and $\Z_p[G^{(n)}]$ have the same $\Z_p$-rank).  Furthermore, $I(\Lambda_n')$ is a direct sum of abelian local rings (see \cite[Section~2.2]{nickel}).
\begin{lemma} \label{lemma:fuerdich} 
$I(\Z_p[G^{(n)}])$ is a compact semi-local ring (i.e. it has a finite number of maximal left ideals). 
\end{lemma}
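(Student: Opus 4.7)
The plan is to exploit the sandwich
\[ Z(\Z_p[G^{(n)}]) \;\subseteq\; I(\Z_p[G^{(n)}]) \;\subseteq\; I(\Lambda_n') = Z(\Lambda_n') \]
noted immediately before the statement, and to transfer compactness and semi-locality from the outer terms. First, I would observe that $I(\Z_p[G^{(n)}])$ is in fact commutative: by construction it is a $Z(\Z_p[G^{(n)}])$-submodule of $Z(\Q_p[G^{(n)}])$, since reduced norms always land in the center. Moreover the $1\times 1$ identity matrix shows ${1=\textup{Nrd}(1)\in I(\Z_p[G^{(n)}])}$, so this subring of the center indeed contains $Z(\Z_p[G^{(n)}])$, which justifies the left-hand inclusion.

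Next I would compare $\Z_p$-ranks. Under the Wedderburn decomposition of $\Q_p[G^{(n)}]$, the center of the maximal order $\Lambda_n'$ is isomorphic to a finite direct sum $\bigoplus_i \mathcal{O}_{F_i}$ of valuation rings of the local fields $F_i=Z(D_i)$, and is therefore a free $\Z_p$-module of rank $\dim_{\Q_p} Z(\Q_p[G^{(n)}])$. The same is true for $Z(\Z_p[G^{(n)}])$, which is also a full $\Z_p$-order in $Z(\Q_p[G^{(n)}])$. Hence $I(\Z_p[G^{(n)}])$ is squeezed between two $\Z_p$-lattices of equal full rank in $Z(\Q_p[G^{(n)}])$, and is itself a finitely generated $\Z_p$-module.

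To conclude, I would invoke the standard fact that a commutative ring $R$ which is finitely generated as a module over the complete local Noetherian ring $\Z_p$ is automatically compact in its $p$-adic topology, while $R/pR$ is a finite-dimensional $\F_p$-algebra, hence Artinian with only finitely many maximal ideals. Since $p$ lies in the Jacobson radical of $R$, every maximal ideal of $R$ contains $pR$, so these maximal ideals correspond bijectively to those of $R/pR$. Applying this to $R=I(\Z_p[G^{(n)}])$ gives both compactness and semi-locality; and since $I(\Z_p[G^{(n)}])$ is commutative, maximal left ideals coincide with maximal (two-sided) ideals.

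I do not expect a serious obstacle. The only small point requiring a moment of care is the coincidence of $\Z_p$-ranks of $Z(\Z_p[G^{(n)}])$ and $Z(\Lambda_n')$, which is immediate because both are $\Z_p$-orders inside the \emph{same} finite-dimensional commutative $\Q_p$-algebra $Z(\Q_p[G^{(n)}])$; the finite index of $I(\Z_p[G^{(n)}])$ in $I(\Lambda_n')$ already recorded before the lemma gives an alternative way to reach the same conclusion.
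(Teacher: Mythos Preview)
Your proof is correct and follows essentially the same approach as the paper: both arguments rest on the observation that $I(\Z_p[G^{(n)}])$ is a finitely generated $\Z_p$-module, from which compactness is immediate and semi-locality follows. The only difference is that the paper dispatches semi-locality by citing \cite[Proposition~2.6]{Lam}, whereas you unpack this directly via the Artinian quotient $R/pR$; your use of the sandwich with $Z(\Lambda_n')$ also makes the finite generation over $\Z_p$ more explicit than the paper's terse ``by definition''.
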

\begin{proof} 
By definition $I(\Z_p[G^{(n)}])$ is a finitely generated $\Z_p$-module. This proves compactness. In view of \cite[Proposition~2.6]{Lam}, the $\Z_p$-algebra $I(\Z_p[G^{(n)}])$ is semi-local. 
\end{proof}

\begin{def1} \label{def:fitt_endl} 
Let $N$ be a finitely generated $\Z_p[G^{(n)}]$ module. For any presentation 
\[ h \colon \Z_p[G^{(n)}]^{a}\longrightarrow \Z_p[G^{(n)}]^{b}\longrightarrow N\]
of $N$, we write $a(h)$ and $b(h)$ to make the dependence clear.
If ${b(h)>a(h)}$ then we define $\textup{Fitt}_{\Z_p[G^{(n)}]}(N)=\{0\}$. Otherwise, we let \[\textup{Fitt}_{\Z_p[G^{(n)}]}(N)=\langle \textup{Nrd}(H)\mid H \textup{ is a $b(h)\times b(h)$ submatrix of $h$ for some $h$}\rangle_{I(\Z_p[G^{(n)}])}. \] 
We call this ideal the \emph{Fitting invariant} of $N$ over $\Z_p[G^{(n)}]$. 
\end{def1} 
\begin{rem} \label{rem:dependance} It turns out that the ideal 
\[\textup{Fitt}^{(h)}_{\Z_p[G^{(n)}]}(N):=\langle \textup{Nrd}(H)\mid H \textup{ is a $b(h)\times b(h)$ submatrix of $h$}\rangle_{I(\Z_p[G^{(n)}])} \] 
generated by the reduced norms of the submatrices for some fixed presentation $h$ of $N$ 
in general depends on the chosen presentation $h$ (see \cite[Example~2.11]{nickel}). We circumvent this dependency by considering all presentations $h$ of $N$ at once (this is what is called the \emph{maximal Fitting invariant} in \cite[Theorem~2.13]{nickel}). 

On the other hand, if ${a(h) = b(h)}$, i.e. if $N$ is \emph{quadratically presented} (which will be the case in our applications), then the ideal $\textup{Fitt}^{(h)}_{\Z_p[G^{(n)}]}(N)$ is independent from $h$ in the following sense: if $h$ is any finite presentation of $N$ with $a(h)=b(H)$, then 
\[ \textup{Fitt}^{(h)}_{\Z_p[G^{(n)}]}(N) = \textup{Fitt}_{\Z_p[G^{(n)}]}(N) \] 
(see \cite[Proposition~2.17]{nickel}). 
\end{rem} 
Now we want to define Fitting invariants of $\Lambda$-modules. Before we can state the actual definition we need a few auxiliary lemmas.
\begin{lemma} For $m\le n$ we denote, by abuse of notation, the natural projection ${\Z_p[G^{(n)}] \longrightarrow \Z_p[G^{(m)}]}$ by $\pi_m$. Then 
    \[\pi_m(I(\Z_p[G^{(n)}]))\subseteq I(\Z_p[G^{(m)}]). \]
\end{lemma}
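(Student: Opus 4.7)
The plan is to reduce the statement to the compatibility property of reduced norms established in Lemma~\ref{compatibility-of-reduced-norms}, applied entry-wise to matrices over the group ring. An arbitrary element of $I(\Z_p[G^{(n)}])$ is by definition a finite sum
\[ \xi = \sum_i z_i \cdot \textup{Nrd}(H_i), \qquad z_i \in Z(\Z_p[G^{(n)}]),\ H_i \in \textup{Mat}_{b_i\times b_i}(\Z_p[G^{(n)}]), \]
so it suffices to show that $\pi_m$ carries each summand into $I(\Z_p[G^{(m)}])$.

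First I would extend $\pi_m$ to the surjective $\Q_p$-algebra homomorphism ${\pi_m\colon \Q_p[G^{(n)}]\longrightarrow \Q_p[G^{(m)}]}$, which restricts to a map ${Z(\Q_p[G^{(n)}])\longrightarrow Z(\Q_p[G^{(m)}])}$ (being a surjection of rings) that carries $Z(\Z_p[G^{(n)}])$ into $Z(\Z_p[G^{(m)}])$. In particular, ${\pi_m(z_i)\in Z(\Z_p[G^{(m)}])}$ for every $i$.

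Next, writing $\pi_m$ also for the entry-wise extension ${\textup{Mat}_{b\times b}(\Z_p[G^{(n)}])\longrightarrow \textup{Mat}_{b\times b}(\Z_p[G^{(m)}])}$, I would apply Lemma~\ref{compatibility-of-reduced-norms} to each matrix entry of $H_i$ (or rather to a splitting field $K$ containing $\Q_p$, which is harmless since the reduced norms are defined via the same projective Wedderburn decomposition). This yields
\[ \pi_m(\textup{Nrd}(H_i)) = \textup{Nrd}(\pi_m(H_i)), \]
because the reduced norm of a matrix is computed as the reduced norm of its image under the regular representation into a larger matrix algebra, and projection commutes with every step of this computation. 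Since $\pi_m(H_i)$ is a $b_i\times b_i$ matrix over $\Z_p[G^{(m)}]$, the right hand side belongs to the generating set of $I(\Z_p[G^{(m)}])$.

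Combining these observations,
\[ \pi_m(\xi) = \sum_i \pi_m(z_i) \cdot \textup{Nrd}(\pi_m(H_i)) \in I(\Z_p[G^{(m)}]), \]
which proves the claim. The only non-routine point is verifying that Lemma~\ref{compatibility-of-reduced-norms}, which is stated for elements of $K[H]$, extends to matrices; but this is a formal matter, since the reduced norm on $\textup{Mat}_{b\times b}(K[H])$ is by construction the reduced norm on ${K[H]}$ applied after the Morita-type identification ${\textup{Mat}_{b\times b}(K[H])\cong K[H]\otimes_K \textup{Mat}_{b\times b}(K)}$, and this identification is natural in $H$.
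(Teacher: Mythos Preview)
Your proof is correct and follows essentially the same approach as the paper: both arguments reduce to the identity $\pi_m(\textup{Nrd}(H))=\textup{Nrd}(\pi_m(H))$ from Lemma~\ref{compatibility-of-reduced-norms} together with the observation that $\pi_m$ maps $Z(\Z_p[G^{(n)}])$ into $Z(\Z_p[G^{(m)}])$. Your version is simply more explicit about writing out a generic element of $I(\Z_p[G^{(n)}])$ and about why Lemma~\ref{compatibility-of-reduced-norms} extends to matrices, which the paper uses without comment.
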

\begin{proof}
    This is proven in \cite[Lemma~1.3(i)]{ellerbrock-nickel}. For the convenience of the reader, we recapitulate the short proof. Let $H\in \textup{Mat}_{b\times b}(\Z_p[G^{(n)}])$. By Lemma \ref{compatibility-of-reduced-norms} we see that
    \[\pi_m(\textup{Nrd}(H))=\textup{Nrd}(\pi_m(H)) \in I(\Z_p[G^{(m)}]).\]
     As $\pi_m(Z(\Z_p[G^{(n)}]))\subseteq Z(\Z_p[G^{(m)}])$, the claim follows. 
\end{proof}
\begin{lemma}\label{representations-are-compatible} Let $h$ be a presentation of a finitely generated $\Lambda$-module $N$. Then $h$ induces a presentation of ${\pi_n(N) := N/I_n N}$ as a $\Z_p[G^{(n)}]$-module.
\end{lemma}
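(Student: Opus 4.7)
The plan is to deduce this from right-exactness of the tensor product $- \otimes_\Lambda \Lambda/I_n$. A presentation of $N$ is by definition an exact sequence
\[ \Lambda^a \xrightarrow{\,h\,} \Lambda^b \longrightarrow N \longrightarrow 0, \]
where $h$ is given by left multiplication by some $b \times a$ matrix $H \in \textup{Mat}_{b \times a}(\Lambda)$. Since $\Z_p[G^{(n)}] \cong \Lambda/I_n$, I would apply the right-exact functor ${-}\otimes_\Lambda (\Lambda/I_n)$ to this sequence. This yields the right-exact sequence
\[ (\Lambda/I_n)^a \xrightarrow{\,\pi_n(h)\,} (\Lambda/I_n)^b \longrightarrow N \otimes_\Lambda (\Lambda/I_n) \longrightarrow 0, \]
using the standard isomorphisms $\Lambda^c \otimes_\Lambda (\Lambda/I_n) \cong (\Lambda/I_n)^c$ for $c \in \{a,b\}$ (the free module tensors to a free module over the quotient ring) and $N \otimes_\Lambda (\Lambda/I_n) \cong N/I_n N = \pi_n(N)$.

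The remaining point is to identify the induced left-hand map with the matrix $\pi_n(H)$ obtained by applying $\pi_n$ entrywise to $H$. This is immediate from the naturality of the isomorphism $\Lambda^c \otimes_\Lambda (\Lambda/I_n) \cong (\Lambda/I_n)^c$ and the fact that the tensor product of the map determined by $H$ with the identity on $\Lambda/I_n$ is described by the reduction of $H$ modulo $I_n$. Thus $\pi_n(h)$ is a presentation of $\pi_n(N)$ as a $\Z_p[G^{(n)}]$-module, with the same number of generators and relations as $h$.

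There is no real obstacle here; the statement is purely formal base change. The only thing worth being careful about is keeping the sidedness straight (all modules are left $\Lambda$-modules, and $\Lambda/I_n$ is a $(\Lambda,\Lambda)$-bimodule since $I_n$ is a two-sided ideal as the kernel of a ring homomorphism), so that the tensor product indeed inherits a $\Z_p[G^{(n)}]$-module structure on the right. This will be used in the sequel to transfer information about Fitting invariants of $N$ to Fitting invariants of the $\pi_n(N)$.
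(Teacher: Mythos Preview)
Your argument is correct and is essentially the same as the paper's: the paper carries out by hand the kernel computation that underlies the right-exactness of base change (showing that the preimage of $I_nN$ in $\Lambda^b$ is $(I_n\Lambda)^b + \textup{Im}(\Lambda^a)$), while you invoke right-exactness of the tensor functor directly. One cosmetic point on sidedness: since all modules are left $\Lambda$-modules, the cleaner way to phrase the functor is $(\Lambda/I_n)\otimes_\Lambda -$ (with $\Lambda/I_n$ as a $(\Lambda/I_n,\Lambda)$-bimodule), so that the output is a left $\Z_p[G^{(n)}]$-module; your remark that the module structure is inherited ``on the right'' is slightly misleading, but the substance is fine.
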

\begin{proof}
Recall that ${I_n = \ker(\pi_n \colon \Lambda \longrightarrow \Z_p[G^{(n)}])}$. Let the presentation $h$ be given by 
\[\Lambda^a\longrightarrow \Lambda^b\longrightarrow N.\]
By abuse of notation we write $h$ for the corresponding map $h\colon \Lambda^b\longrightarrow N$.
We obtain a surjection ${\pi_n(h) \colon (\Lambda/\ker(\pi_n))^b\longrightarrow \pi_n(N)}$. It remains to determine the kernel of $\pi_n(h)$. The pre-image of $I_nN$ under $h$ is clearly $(I_n\Lambda)^b+\ker(h)=(I_n\Lambda)^b+\textup{Im}(\Lambda^a)$. It follows that the kernel of $\pi_n(h)$ is the image of $(\Lambda/I_n)^a$ in $(\Lambda/I_n)^b$. 
\end{proof}
 \begin{def1}
 Let $I(\Lambda)$ be the projective limit $\varprojlim_{n}I(\Z_p[G^{(n)}])$. 
 \end{def1} 

 Now we can define the Fitting invariants of $\Lambda$-modules. 
 \begin{def1} For every presentation 
 \[h\colon \Lambda^{a}\longrightarrow \Lambda^{b}\longrightarrow N,\]
 we write ${a = a(h)}$ and ${b = b(h)}$ to make the dependency clear. Now we define 
 \[\textup{Fitt}_\Lambda(N)=\langle \textup{Nrd}(H)\mid H \textup{ is a $b(h)\times b(h)$ submatrix of $h$ for some $h$}\rangle_{I(\Lambda)}.\]
 \end{def1}
Note that it follows from Definition~\ref{def:fitt_endl} and (the proof of) Lemma~\ref{quadratical represented} below that $\textup{Fitt}_\Lambda(N) = \{0\}$ if ${b(h) > a(h)}$. 
 
 \begin{lemma}
 \label{compact}
     Let $N$ be a finitely generated $\Lambda$-module. Then 
     $$\varprojlim_n \textup{Fitt}_{\Z_p[G^{(n)}]}(\pi_n(N))$$ 
     is a compact $I(\Lambda)$-module.
 \end{lemma}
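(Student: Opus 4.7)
The plan is to realise $\varprojlim_n \textup{Fitt}_{\Z_p[G^{(n)}]}(\pi_n(N))$ as an inverse limit of compact $I(\Z_p[G^{(n)}])$-modules over continuous equivariant transition maps; compactness of the limit will then follow from Tychonoff, since the limit is a closed submodule of the compact product $\prod_n \textup{Fitt}_{\Z_p[G^{(n)}]}(\pi_n(N))$. Three points need to be checked:
\begin{compactenum}[(i)]
\item each $\textup{Fitt}_{\Z_p[G^{(n)}]}(\pi_n(N))$ is compact;
\item for $m \le n$ the projection $\pi_m$ carries the level-$n$ Fitting invariant continuously into the level-$m$ one;
\item the $I(\Lambda)$-action on the limit is induced from the levelwise $I(\Z_p[G^{(n)}])$-actions via the very definition $I(\Lambda) = \varprojlim_n I(\Z_p[G^{(n)}])$.
\end{compactenum}

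For (i), the text has just observed that $I(\Z_p[G^{(n)}])$ sits between $Z(\Z_p[G^{(n)}])$ and $Z(\Lambda_n')$, two $\Z_p$-orders of equal $\Z_p$-rank inside $Z(\Q_p[G^{(n)}])$; hence $I(\Z_p[G^{(n)}])$ is a finitely generated $\Z_p$-module, and in particular a Noetherian commutative ring. By construction $\textup{Fitt}_{\Z_p[G^{(n)}]}(\pi_n(N))$ is an $I(\Z_p[G^{(n)}])$-submodule of $I(\Z_p[G^{(n)}])$, i.e.\ an ideal, and Noetherianity then forces it to be finitely generated. Since $I(\Z_p[G^{(n)}])$ is compact by Lemma~\ref{lemma:fuerdich}, a finitely generated ideal is the continuous image of a finite power of $I(\Z_p[G^{(n)}])$ under a multiplication-and-sum map, and is therefore itself compact.

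For (ii), an obvious finite-level variant of Lemma~\ref{representations-are-compatible} shows that any presentation $h$ of $\pi_n(N)$ as a $\Z_p[G^{(n)}]$-module of shape $a(h)\times b(h)$ descends under $\pi_m$ to a presentation $\pi_m(h)$ of $\pi_m(N)$ of the same shape. Lemma~\ref{compatibility-of-reduced-norms} then gives $\pi_m(\textup{Nrd}(H)) = \textup{Nrd}(\pi_m(H))$ for every $b(h)\times b(h)$ submatrix $H$, while the projection compatibility lemma immediately preceding the current statement gives $\pi_m(I(\Z_p[G^{(n)}])) \subseteq I(\Z_p[G^{(m)}])$; together these send generators to generators, so $\pi_m$ maps $\textup{Fitt}_{\Z_p[G^{(n)}]}(\pi_n(N))$ into $\textup{Fitt}_{\Z_p[G^{(m)}]}(\pi_m(N))$, and continuity is automatic. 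Point (iii) is then immediate, and the proof concludes by Tychonoff. The main obstacle is really (i): by definition the Fitting invariant is built from reduced norms of submatrices ranging over \emph{all} presentations of $\pi_n(N)$, an a priori far from finite set of data, and Noetherianity of $I(\Z_p[G^{(n)}])$ is precisely what is needed to cut this down to a finite, and hence closed and compact, generating set.
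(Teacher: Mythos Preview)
Your argument is correct and follows essentially the same route as the paper: both reduce to showing that each $\textup{Fitt}_{\Z_p[G^{(n)}]}(\pi_n(N))$ is compact (via finite generation over the compact ring $I(\Z_p[G^{(n)}])$, which is Noetherian since it is a finitely generated $\Z_p$-module), and then appeal to the fact that an inverse limit of compact spaces is compact. Your write-up is simply more explicit than the paper's about the well-definedness of the transition maps and the Tychonoff step, which the paper leaves implicit.
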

 \begin{proof}
     It suffices to show that each $\textup{Fitt}_{\Z_p[G^{(n)}]}(\pi_n(N))$ is a compact $I(\Z_p[G^{(n)}])$-module. Each single $I(\Z_p[G^{(n)}])$ is a compact semi-local ring by Lemma~\ref{lemma:fuerdich} and the Fitting invariant $\textup{Fitt}_{\Z_p[G^{(n)}]}(\pi_n(N))$ is finitely generated as a $I(\Z_p[G^{(n)}])$-module. In particular, each single $\textup{Fitt}_{\Z_p[G^{(n)}]}(\pi_n(N))$ is compact.
 \end{proof}
 \begin{lemma}
 \label{quadratical represented}
    We always have the identity
     \[\textup{Fitt}_\Lambda(N)=\varprojlim_n \textup{Fitt}_{\Z_p[G^{(n)}]}(\pi_n(N)).\] 
 \end{lemma}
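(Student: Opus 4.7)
The plan is to prove the two inclusions separately.

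For $\subseteq$, take any generator $\textup{Nrd}(H)$ of $\textup{Fitt}_\Lambda(N)$ coming from a presentation $h\colon \Lambda^{a(h)} \to \Lambda^{b(h)} \to N$ and a $b(h)\times b(h)$ submatrix $H$. Lemma~\ref{representations-are-compatible} produces an induced presentation $\pi_n(h)$ of $\pi_n(N)$ of the same dimensions, and $\pi_n(H)$ is a $b(h)\times b(h)$ submatrix of it. Lemma~\ref{compatibility-of-reduced-norms} gives $\pi_n(\textup{Nrd}(H)) = \textup{Nrd}(\pi_n(H)) \in \textup{Fitt}_{\Z_p[G^{(n)}]}(\pi_n(N))$. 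Since $I(\Lambda)$-coefficients project to $I(\Z_p[G^{(n)}])$-coefficients and the resulting images are compatible across $n$ by construction, every element of $\textup{Fitt}_\Lambda(N)$ yields an element of $\varprojlim_n \textup{Fitt}_{\Z_p[G^{(n)}]}(\pi_n(N))$.

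The inclusion $\supseteq$ hinges on the following lifting statement: for every presentation $h^{(n)}\colon \Z_p[G^{(n)}]^a \to \Z_p[G^{(n)}]^b \to \pi_n(N)$ and every $b\times b$ submatrix $H^{(n)}$ there is a presentation $h$ of $N$ over $\Lambda$ whose matrix contains a $b\times b$ submatrix $H$ with $\pi_n(H) = H^{(n)}$. To construct $h$, first lift the generators of $\pi_n(N)$ appearing in $h^{(n)}$ to $g_1,\ldots,g_b \in N$. Since $I_n \subseteq \mathfrak{m}$ one has $N/\mathfrak{m}N = \pi_n(N)/\mathfrak{m}_n\pi_n(N)$, so the images of $g_1,\ldots,g_b$ generate $N/\mathfrak{m}N$, and Nakayama (Lemma~\ref{nakayama}) forces them to generate $N$. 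For each column of $H^{(n)}$ pick a lift $\widetilde{r} \in \Lambda^b$; then $\widetilde{r}\cdot(g_1,\ldots,g_b) \in I_nN$, and writing this element as $\sum_j \delta_j g_j$ with $\delta_j \in I_n$ produces a correction term so that $r' := \widetilde{r} - (\delta_1,\ldots,\delta_b)$ is a genuine relation still projecting to the chosen column. Appending the $b$ lifted relations to any finite presentation of $N$, which exists by Noetherianness of $\Lambda$, yields the desired $h$.

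Combined with surjectivity of $I(\Lambda) \to I(\Z_p[G^{(n)}])$ for the coefficients, the lifting shows $\textup{Fitt}_\Lambda(N) \to \textup{Fitt}_{\Z_p[G^{(n)}]}(\pi_n(N))$ is surjective for each $n$. To upgrade pointwise surjectivity to surjectivity onto the full projective limit, I would use compactness of $I(\Lambda) = \varprojlim_n I(\Z_p[G^{(n)}])$ (the $I(\Z_p[G^{(n)}])$ are compact by Lemma~\ref{lemma:fuerdich}): for any compatible system $(x_n)_n$, choose preimages $y_n \in \textup{Fitt}_\Lambda(N) \subseteq I(\Lambda)$, extract a subsequence $y_{n_k} \to y$ by compactness, and conclude via continuity of the projections and compatibility of $(x_n)$ that $\pi_m(y) = x_m$ for every $m$. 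The main obstacle is verifying that the limit $y$ still lies in $\textup{Fitt}_\Lambda(N)$, i.e.\ that this algebraically defined $I(\Lambda)$-submodule is closed in $I(\Lambda)$. I expect this to follow from the lifting step itself, since it ensures that the image of $\textup{Fitt}_\Lambda(N)$ in each $I(\Z_p[G^{(n)}])$ equals the finitely generated, hence closed, submodule $\textup{Fitt}_{\Z_p[G^{(n)}]}(\pi_n(N))$, forcing $\textup{Fitt}_\Lambda(N)$ to coincide with the closed projective limit.
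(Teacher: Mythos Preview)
Your overall route is the same as the paper's: for $\subseteq$ you use Lemma~\ref{representations-are-compatible} and Lemma~\ref{compatibility-of-reduced-norms} exactly as the paper does; for $\supseteq$ you lift a level-$n$ presentation of $\pi_n(N)$ to a presentation of $N$ over $\Lambda$ and then pass to the limit by density plus compactness (Lemma~\ref{compact}). The only cosmetic difference is in the lifting step: you lift generators by hand and correct relations by elements of $I_n$, while the paper lifts the surjection $\Lambda^b \to \pi_n(N)$ through $N$ using projectivity of $\Lambda^b$ and then applies the snake lemma to see that $\ker(\psi)$ surjects onto $\ker(h')$. These arguments are interchangeable.

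The genuine gap is in your final paragraph. Surjecting onto each finite level does \emph{not} force a subset of a projective limit to equal the limit: $\Z \subseteq \Z_p = \varprojlim_n \Z/p^n\Z$ surjects onto every $\Z/p^n\Z$ yet is strictly smaller. So the clause ``forcing $\textup{Fitt}_\Lambda(N)$ to coincide with the closed projective limit'' is precisely the statement you are trying to prove and does not follow from the levelwise surjectivity you established. You correctly identified closedness of $\textup{Fitt}_\Lambda(N)$ in $I(\Lambda)$ as the missing ingredient, but the resolution you sketch is circular. For comparison, the paper does not argue this point either: it writes ``Since it is also closed'' and moves on. So your argument is at the same level of completeness as the paper's on this step; the difference is only that you flagged the issue explicitly, whereas the paper asserts it.
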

 \begin{proof} 
 We will first show  the inclusion $\subseteq$. Consider a presentation
 \[\Lambda^a\longrightarrow \Lambda^b\longrightarrow N,\]
 which is represented by the matrix $H$. Let ${n \in \N}$ be arbitrary. Then $\pi_n(H)$ describes a presentation of $\pi_n(N)$ as a $\Z_p[G^{(n)}]$-module by Lemma \ref{representations-are-compatible}. Using Lemma~\ref{compatibility-of-reduced-norms} it follows that
 \[\textup{Fitt}_\Lambda(N)\subseteq \varprojlim_{n}\textup{Fitt}_{\Z_p[G^{(n)}]}(\pi_n(N)).\]
 
 Assume now that we have a presentation 
 \[ \Z_p[G^{(n)}]^a\longrightarrow \Z_p[G^{(n)}]^b \stackrel{h'}{\longrightarrow} \pi_n(N).\]
 This presentation induces  a surjection $\psi_n\colon\Lambda^b\longrightarrow \pi_n(N)$. As $N\longrightarrow \pi_n(N)$ is surjective and $\Lambda^b$ is projective we obtain a map $\psi\colon \Lambda^b\longrightarrow N$ such that ${\pi_n\circ \psi=\psi_n}$. Recall that $\pi_n(N)=N/I_nN$. 
 By Lemma \ref{nakayama} we see that $\psi$ is surjective. Furthermore, $(I_n\Lambda)^b$ surjects to the kernel of $\pi_n\colon N\longrightarrow \pi_n(N)$ by definition. Then the snake lemma implies that $\ker(\psi)$ surjects onto $\ker(h')$. Therefore the homomorphism $\psi$ induces a presentation $\psi_m$ of $\pi_m(N)$ for every ${m \ge n}$. 
 
This argument works for every $n$ and we see that the image of $\textup{Fitt}_\Lambda(N)$ lies dense in $\varprojlim_n\textup{Fitt}_{\Z_p[G^{(n)}]}(\pi_n(N))$. Since it is also closed, and as the projective limit $\varprojlim_n \textup{Fitt}_{\Z_p[G^{(n)}]}(\pi_n(N))$  is compact  by Lemma \ref{compact}, we obtain the claim. 
 \end{proof}

Finally, we want to compare Fitting invariants and annihilator ideals. For a finitely generated module $N$ over a \emph{commutative} ring $R$, it is well-known that the Fitting ideal of $N$ over $R$ is contained in the annihilator ideal, and that a suitable power of the annihilator ideal is contained in the Fitting ideal (it suffices to raise the annihilator ideal to the power $g$ if $N$ can be generated over $R$ by $g$ elements), see for example \cite[Chapter~3, Theorem~5]{northcott}. Therefore the Fitting ideal carries much information about the annihilators of $N$. 

In the non-commutative setting everything is a bit more subtle. First of all, even if $N$ is a torsion $\Lambda$-module, where $\Lambda$ now shall have the usual meaning (i.e. ${\Lambda = \Z_p\llbracket G\rrbracket }$ for some uniform $p$-group $G$), then it is not clear whether there exists a non-trivial global annihilator of $N$, i.e. the annihilator ideal 
\[ \textup{Ann}_\Lambda(N) = \{ x \in \Lambda \mid x \cdot n = 0 \, \forall n \in N \}\] 
could be just the zero ideal. 

The main goal of the remainder of this section is the proof of the following 
\begin{prop} \label{prop:Fittingvsann} 
  Let $N$ be a finitely generated $\Lambda$-module which has a quadratic presentation ${\Lambda^l \stackrel{h}{\longrightarrow} \Lambda^l \twoheadrightarrow N}$. 

  Then $\Nrd(\textup{Ann}(N))^l \subseteq \textup{Fitt}_\Lambda(N)$. 
\end{prop}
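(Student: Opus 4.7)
The plan is to imitate the classical commutative proof that $\Ann(N)^l \subseteq \textup{Fitt}(N)$ by producing an $l \times l$ diagonal submatrix of a presentation of $N$ whose diagonal entries are the prescribed annihilators, then reading off its reduced norm via Lemma~\ref{lemma:blockdets}.

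First I would reduce to showing that $\Nrd(x_1)\cdots\Nrd(x_l) \in \textup{Fitt}_\Lambda(N)$ for arbitrary $x_1, \ldots, x_l \in \Ann_\Lambda(N)$. Since the reduced norms are central, the ideal $\Nrd(\Ann(N))^l$ is generated as an $I(\Lambda)$-module by such $l$-fold products, and $\textup{Fitt}_\Lambda(N)$ is itself an $I(\Lambda)$-submodule, so this reduction suffices. Fix such $x_1, \ldots, x_l$, let $e_1, \ldots, e_l$ denote the standard basis of the free module $\Lambda^l$ appearing as the target of $h$, and let $\overline{e}_k$ denote the image of $e_k$ in $N$. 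The condition $x_k \in \Ann_\Lambda(N)$ forces $x_k e_k \in \Image(h)$.

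Next, set $X := \textup{diag}(x_1, \ldots, x_l) \in \textup{Mat}_{l \times l}(\Lambda)$ and form the augmented map
\[ H' := (h \mid X) \colon \Lambda^{2l} \longrightarrow \Lambda^l. \]
Because every column of $X$ already lies in $\Image(h)$, we have $\Image(H') = \Image(h)$, so $H'$ is a valid (non-quadratic) presentation of $N$. By the definition of $\textup{Fitt}_\Lambda(N)$, the reduced norm of every $l \times l$ submatrix of $H'$ belongs to $\textup{Fitt}_\Lambda(N)$; in particular, taking the block formed by the last $l$ columns gives $\Nrd(X) \in \textup{Fitt}_\Lambda(N)$.

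Finally, Lemma~\ref{lemma:blockdets} applied inductively (peeling off one diagonal entry at a time, since $X$ is triangular with trivial off-diagonal block) yields $\Nrd(X) = \Nrd(x_1) \cdots \Nrd(x_l)$, which finishes the argument. The only thing needing careful verification—and the closest this proof comes to a genuine obstacle—is that the augmented data $(h \mid X)$ really defines a presentation in the precise sense used in the definition of $\textup{Fitt}_\Lambda(N)$; this is immediate because appending columns already lying in $\Image(h)$ does not change the kernel of the surjection $\Lambda^l \twoheadrightarrow N$. The multiplicativity $\Nrd(X) = \prod_{i} \Nrd(x_i)$ is then exactly Lemma~\ref{lemma:blockdets} by induction.
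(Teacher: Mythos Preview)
Your argument is correct and is in fact cleaner than the paper's. The paper follows Eisenbud's inductive scheme: it introduces \emph{higher Fitting invariants} $\textup{Fitt}_\Lambda^j(N,h)$, fixes a single $a \in \Ann(N)$, builds a tower of enlarged presentations $(h \mid aI_l \mid aI_l \mid \cdots)$, and proves the descent step
\[
\Nrd(a)\cdot \textup{Fitt}_\Lambda^j(N,h+a\cdot\id_{rl}) \subseteq \textup{Fitt}_\Lambda^{j-1}(N,h+a\cdot\id_{(r+1)l}),
\]
iterating $l$ times to land in $\textup{Fitt}_\Lambda^0 \subseteq \textup{Fitt}_\Lambda(N)$. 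Your route bypasses the higher invariants entirely: you observe once and for all that the diagonal block $X=\textup{diag}(x_1,\ldots,x_l)$ already sits as an $l\times l$ submatrix of the single augmented presentation $(h\mid X)$, and then Lemma~\ref{lemma:blockdets} (applied at each finite level $G^{(n)}$ and passed to the limit) gives $\Nrd(X)=\prod_i \Nrd(x_i)$ directly. Besides being shorter, your version immediately handles mixed products $\Nrd(x_1)\cdots\Nrd(x_l)$ with distinct $x_i$, whereas the paper's proof as written only produces $\Nrd(a)^l$ for a single $a$ and would need a small (obvious) modification to cover the general generators of the ideal power.
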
 
\begin{proof} 
  We mimic the proof of \cite[Proposition~20.7b.]{eisenbud}. In order to prove the proposition, we introduce the notion of \emph{higher Fitting invariants}. 
  \begin{def1} For every presentation 
 \[h\colon \Lambda^{a}\longrightarrow \Lambda^{b}\longrightarrow N,\]
 we write ${a = a(h)}$ and ${b = b(h)}$ to make the dependency clear. For ${j \in \N}$ we define the \emph{$j$-th Fitting invariant} of $N$ with respect to $h$ as 
 \[\textup{Fitt}_\Lambda^j(N, h)=\langle \textup{Nrd}(H)\mid H \textup{ is a $(b(h) - j)\times (b(h)-j)$ submatrix of $h$}\rangle_{I(\Lambda)}.\] 
 Here we define $\textup{Nrd}(H) = 1$ if ${j \ge b(h)}$. 
 \end{def1}
 \begin{rem} 
   For $j = 0$ we just obtain the classical Fitting invariant of $N$ with respect to $h$. If $h$ is a quadratic presentation of $N$, then ${\textup{Fitt}_\Lambda^0(N,h) = \textup{Fitt}_\Lambda(N)}$. 
 \end{rem} 

 \begin{rem} It follows from the definitions that for a quadratically presented $\Lambda$-module $N$ with presentation ${h \colon \Lambda^b \longrightarrow \Lambda^b \longrightarrow N}$, we have 
  \[ \textup{Fitt}_\Lambda^j(N, h) = I(\Lambda)\]
  if ${j \ge b(h)}$. 
  \end{rem} 
  
  Now we return to the proof of Proposition~\ref{prop:Fittingvsann}. Fix ${a \in \textup{Ann}(N)}$.   Let $h$ be as in the statement of the proposition, and consider the presentations 
  \[ h + a \cdot \id_{rl} \colon \Lambda^l \oplus \Lambda^{rl} \longrightarrow \Lambda^l \longrightarrow N \] 
  of $N$, ${r \in \N}$. The corresponding matrix is the ${l \times (r+1)l}$-matrix ${(h \mid a I_{rl})}$, where $a I_{rl}$ shall mean the ${rl \times rl}$-diagonal matrix which is $a$ times the identity matrix.
  
  We will first prove that 
  \[ \textup{Nrd}(a) \cdot \textup{Fitt}_\Lambda^j(N,h + a \cdot \id_{rl}) \subseteq \textup{Fitt}_\Lambda^{j-1}(N,h + a \cdot \id_{(r+1)l})\] 
  for every ${j,r \in \N}$.

  To this purpose, let $H$ be a ${s \times s}$-submatrix of ${(h \mid a I_{rl})}$, ${s \in \N}$. Choose a row which is not contained in $H$ and let ${H \oplus a}$ be the ${(s+1)\times (s+1)}$-submatrix of ${(h \mid a I_{(r+1)l})}$ of the form 
  \[ H \oplus a = \begin{pmatrix} H & 0 \\ \star & a\end{pmatrix}. \] 
  It follows from Lemma~\ref{lemma:blockdets} that 
  \[ \textup{Nrd}(H \oplus a) = \textup{Nrd}(H) \cdot \textup{Nrd}(a). \] 
  Now the left hand side is contained in the Fitting ideal $\textup{Fitt}^{l-(s+1)}_\Lambda(N, h \oplus a \cdot \id_{(r+1)l})$. Since $H$ was an arbitrary submatrix of the presentation ${h + a \cdot \id_{rl}}$ of $N$, it follows that indeed 
  \[ \textup{Nrd}(a) \cdot \textup{Fitt}_\Lambda^j(N,h + a \cdot \id_{rl}) \subseteq \textup{Fitt}_\Lambda^{j-1}(N,h+ a \cdot \id_{(r+1)l}), \] 
  where ${j =l - s}$. 
Now we conclude the proof of Proposition~\ref{prop:Fittingvsann}. First, recall that ${\textup{Fitt}_\Lambda^l(N,h) = I(\Lambda)}$. Therefore 
\[ \textup{Nrd}(a) \in \textup{Nrd}(a) \cdot I(\Lambda) = \textup{Nrd}(a) \cdot \textup{Fitt}_\Lambda^l(N,h) \subseteq \textup{Fitt}_\Lambda^{l-1}(N,h + a \cdot \id_l). \] 
Then 
\[ \textup{Nrd}(a)^2 \in \textup{Nrd}(a) \cdot \textup{Fitt}_\Lambda^{l-1}(N,h+ a \cdot \id_l) \subseteq \textup{Fitt}_\Lambda^{l-2}(N,h + a \cdot \id_{2l}).\] 
Inductively, we obtain that 
\[ \textup{Nrd}(a)^l \in \textup{Fitt}_\Lambda^0(N,h + a \cdot \id_{l^2}) \subseteq  \textup{Fitt}_\Lambda(N). \]
Since ${a \in \textup{Ann}(N)}$ was arbitrary, this proves the claim. 
\end{proof} 

\section{A non-commutative main conjecture} \label{section:main_conjecture} 
We finally have all pieces in place to prove the main result of this article, i.e. a non-commutative main conjecture for Jacobians of graphs.
Assume that $G$ is a uniform $p$-adic Lie group as in the preceding section. Let ${X_\infty}$ be the derived graph of a voltage assignment ${\alpha \colon S \longrightarrow G}$. Recall from Corollary~\ref{cor:J_infty} that we have a short exact sequence 
\[ 0 \longrightarrow \textup{Pr}_\Lambda \longrightarrow \Div_\Lambda \longrightarrow \textup{Pic}_\infty \longrightarrow 0. \] 
Moreover, we have seen in Lemma~\ref{tensor-proj-lim} that 
\[ \textup{Pic}_\infty \cong \varprojlim_n \textup{Pic}(X_n). \] 

Since $\Pr_\Lambda$ is generated by the elements $p_i$, it follows from the definitions that the above presentation of $\textup{Pic}_\infty$ is given by the matrix $(D - A_\alpha)^t=D-A_\alpha^t$. We also know that $\textup{Pic}(X_n)$ is quadratically presented by the matrix $\pi_n(D - A_\alpha^t)$. As an immediate consequence we obtain
\begin{lemma}
\label{maini}
    \[\textup{Fitt}_\Lambda(\textup{Pic}_\infty)= \langle \textup{Nrd}(D-A_\alpha^t)\rangle_{I(\Lambda)}.\]
\end{lemma}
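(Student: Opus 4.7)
The proof is essentially an assembly of the machinery built up in the preceding sections. The starting point is the short exact sequence from Corollary~\ref{cor:J_infty}, which presents $\textup{Pic}_\infty$ as a quotient of two $\Lambda$-free modules of the same rank $m = |V(X)|$ (by Lemma~\ref{lem:free} for $\textup{Pr}_\Lambda$ and by definition for $\textup{Div}_\Lambda$), with presenting matrix $D - A_\alpha^t$. Since the two free modules have the same rank, this is a \emph{quadratic} presentation, which is precisely the setting singled out in Remark~\ref{rem:dependance}.

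The plan is to descend to the finite levels, apply the quadratic-presentation result there, and then pass back to the limit. Tensoring the exact sequence with $\Z_p[G^{(n)}]$ over $\Lambda$ yields a quadratic presentation of $\textup{Pic}(X_n)$ by the matrix $\pi_n(D - A_\alpha^t)$. At each finite level, Remark~\ref{rem:dependance} (based on \cite[Proposition~2.17]{nickel}) gives
\[ \textup{Fitt}_{\Z_p[G^{(n)}]}(\textup{Pic}(X_n)) = \langle \textup{Nrd}(\pi_n(D - A_\alpha^t))\rangle_{I(\Z_p[G^{(n)}])}.\]
Lemma~\ref{quadratical represented} transports this identity to the Iwasawa-algebra level:
\[ \textup{Fitt}_\Lambda(\textup{Pic}_\infty) = \varprojlim_n \langle \textup{Nrd}(\pi_n(D - A_\alpha^t))\rangle_{I(\Z_p[G^{(n)}])}. \]
The compatibility of reduced norms with the projections (Lemma~\ref{compatibility-of-reduced-norms}), together with the definitions $I(\Lambda) = \varprojlim_n I(\Z_p[G^{(n)}])$ and $\textup{Nrd}(D - A_\alpha^t) = \varprojlim_n \textup{Nrd}(\pi_n(D - A_\alpha^t))$, identifies the right-hand side with $\langle \textup{Nrd}(D - A_\alpha^t)\rangle_{I(\Lambda)}$, yielding the claim.

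The only subtlety worth flagging is the identification $\pi_n(\textup{Pic}_\infty) = \textup{Pic}(X_n)$, which is needed to ensure that the matrix $\pi_n(D - A_\alpha^t)$ really presents $\textup{Pic}(X_n)$ rather than merely $\textup{Pic}_\infty/I_n\textup{Pic}_\infty$; this is guaranteed by the projective-limit description of $\textup{Pic}_\infty$ in Lemma~\ref{tensor-proj-lim} together with the surjectivity statements of Section~\ref{section:algebraic_theory}. Beyond this, no real obstacle arises: given how carefully Section~\ref{section:Fitting} was arranged so that reduced norms, Fitting invariants and the ring $I(\Lambda)$ all respect the projective-limit structure, the identity is essentially a bookkeeping consequence of the fact that $\textup{Pic}_\infty$ admits a square presentation matrix.
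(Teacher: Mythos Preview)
Your proof is correct and follows essentially the same approach as the paper: both use the quadratic presentation of $\textup{Pic}_\infty$ by $D-A_\alpha^t$ coming from Corollary~\ref{cor:J_infty}, descend to the finite levels where Remark~\ref{rem:dependance} applies, and pass back via Lemma~\ref{quadratical represented}. Your write-up is more explicit than the paper's (which simply says ``as an immediate consequence''), and the subtlety you flag about identifying $\pi_n(\textup{Pic}_\infty)$ with $\textup{Pic}(X_n)$ is in fact not even needed---Lemma~\ref{quadratical represented} is stated for $\pi_n(N)=N/I_nN$, so it suffices to know that $\pi_n(D-A_\alpha^t)$ quadratically presents $\pi_n(\textup{Pic}_\infty)$, which is exactly Lemma~\ref{representations-are-compatible}.
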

This lemma can be regarded as a main conjecture for $\textup{Pic}_\infty$. If $G$ is abelian it can be rewritten as
\[\textup{Char}_\Lambda(\textup{Pic}_\infty)=\det(D-A_\alpha^t).\]
If furthermore, $d\ge 2$, then $\textup{Pic}_\infty$ and $J_\infty$ have the same characteristic ideal and we recover \cite[Theorem 5.2]{Kleine-Mueller4}. In the remainder of this section we will use Lemma~\ref{maini} to derive results on the Fitting invariant of $J_\infty$, generalizing \cite[Theorem 5.2]{Kleine-Mueller4}. 

\begin{rem} 
Note that in view of Proposition~\ref{prop:Fittingvsann} the above Lemma~\ref{maini} yields information on the global annihilators of $\textup{Pic}_\infty$. To be more precise, if $m$ denotes the number of vertices of the base graph $X$, and ${a \in \Lambda}$ is a global annihilator of $\textup{Pic}_\infty$, then $a^m$ will be contained in the ideal $\langle \textup{Nrd}(D-A_\alpha^t)\rangle_{I(\Lambda)}$. 
\end{rem} 

 It is well-known that Fitting ideals over commutative rings are well-behaved under exact sequences, and similar but somewhat weaker properties are valid for Fitting invariants over Fitting orders (see \cite[Lemma~2.16]{nickel}). We would like to prove something analogous in our setting. 
\begin{lemma}
\label{behavior-under-exact-sequence}
    Let $0\longrightarrow M_1\longrightarrow M_2\longrightarrow M_3\longrightarrow 0$ be an exact sequence of $\Lambda$-modules. Then 
    \[\textup{Fitt}_\Lambda(M_1)\textup{Fitt}_\Lambda(M_3) \subseteq \textup{Fitt}_\Lambda(M_2).\]
    and $\textup{Fitt}_\Lambda(M_2)\subseteq \textup{Fitt}_\Lambda(M_3)$.
\end{lemma}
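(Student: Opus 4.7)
The plan is to adapt the classical proof of the analogous statement for commutative Fitting ideals (see for instance \cite[Corollary~20.4]{eisenbud}), using Lemma~\ref{lemma:blockdets} as the non-commutative substitute for the multiplicativity of determinants on block triangular matrices.

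For the second inclusion $\textup{Fitt}_\Lambda(M_2)\subseteq \textup{Fitt}_\Lambda(M_3)$, I would start from an arbitrary presentation
\[
h_2 \colon \Lambda^{a_2} \longrightarrow \Lambda^{b_2} \longrightarrow M_2 \longrightarrow 0,
\]
compose $\Lambda^{b_2}\twoheadrightarrow M_2$ with the surjection $M_2\twoheadrightarrow M_3$, and then lift a finite set of $\Lambda$-generators of $M_1$ into $\Lambda^{b_2}$ via $h_2$. Appending those lifts as extra columns to the matrix of $h_2$ produces a presentation $h' \colon \Lambda^{a_2+c}\to \Lambda^{b_2}\to M_3$ whose first $a_2$ columns coincide with those of $h_2$. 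Every $b_2 \times b_2$ submatrix of $h_2$ is then also a $b_2 \times b_2$ submatrix of $h'$, and the inclusion follows immediately from the definition of $\textup{Fitt}_\Lambda$.

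For the first inclusion $\textup{Fitt}_\Lambda(M_1)\textup{Fitt}_\Lambda(M_3)\subseteq \textup{Fitt}_\Lambda(M_2)$, I would fix presentations $h_i \colon \Lambda^{a_i}\to \Lambda^{b_i}\to M_i$ for $i = 1,3$. Since $\Lambda^{b_3}$ is projective, the surjection $M_2\twoheadrightarrow M_3$ admits a lift $\Lambda^{b_3}\to M_2$, which combined with the embedding $M_1\hookrightarrow M_2$ yields a surjection $\Lambda^{b_1+b_3}\to M_2$. A horseshoe-type diagram chase then produces a presentation
\[
\Lambda^{a_1+a_3}\longrightarrow \Lambda^{b_1+b_3}\longrightarrow M_2\longrightarrow 0
\]
whose matrix has block upper triangular form $\bigl(\begin{smallmatrix} h_1 & *\\ 0 & h_3\end{smallmatrix}\bigr)$, the zero block reflecting the fact that relations lifted from $M_1$ involve only the first $b_1$ generators. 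Selecting any $b_1 \times b_1$ submatrix $H_1$ of $h_1$ together with any $b_3 \times b_3$ submatrix $H_3$ of $h_3$, and keeping all $b_1+b_3$ rows, gives a $(b_1+b_3)\times(b_1+b_3)$ submatrix of the form $\bigl(\begin{smallmatrix} H_1 & *'\\ 0 & H_3\end{smallmatrix}\bigr)$. Its reduced norm equals $\textup{Nrd}(H_1)\cdot \textup{Nrd}(H_3)$ by a straightforward generalization of Lemma~\ref{lemma:blockdets} from scalar $a$ to an arbitrary square block, proved by exactly the same argument: apply an irreducible representation $\rho$, invoke $\rho\circ \textup{Nrd}=\det\circ\rho$, and use the classical multiplicativity of determinants on block triangular matrices.

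The main obstacle is bookkeeping rather than substance: one needs to execute the horseshoe construction with a judicious choice of lifts so that the combined presentation has the advertised block triangular shape, and to record the generalization of Lemma~\ref{lemma:blockdets} to square blocks. Neither step is deep, but both must be written out with care to make the block structure transparent and to ensure that arbitrary $(b_1+b_3)\times(b_1+b_3)$ submatrices arising from independent choices of $H_1$ and $H_3$ fall into the generating set on the right-hand side of the definition of $\textup{Fitt}_\Lambda(M_2)$.
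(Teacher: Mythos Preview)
Your proposal is correct and follows essentially the same route as the paper: the horseshoe construction yielding a block upper triangular presentation of $M_2$, followed by multiplicativity of $\Nrd$ on block triangular matrices, is exactly what the paper does (the paper phrases the multiplicativity by passing to the projections $\pi_n$ and taking the projective limit, which is precisely how your ``straightforward generalization of Lemma~\ref{lemma:blockdets}'' must be verified given the definition of $\Nrd$ on $\Lambda$). Your argument for the second inclusion is a bit more explicit than the paper's one-line appeal to the finite-level analogue, but the content is the same.
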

\begin{proof}
    Let $\Lambda^{a_1}\longrightarrow \Lambda^{b_1}\longrightarrow M_1$ and $\Lambda^{a_2}\longrightarrow \Lambda^{b_2}\longrightarrow M_3$ be presentations represented by the matrices $A_1$ and $A_2$. We clearly get a presentation
    \[\Lambda^{a_1+a_2}\longrightarrow \Lambda^{b_1+b_2}\longrightarrow M_2\]
    given by a matrix of the form
    $A=\begin{pmatrix}
     A_1&*\\0&A_2
    \end{pmatrix}$.
    
    Let $H_1$ and $H_2$ be $b_1\times b_1$ and $b_2\times b_2$ minors of $A_1$ and $A_2$ respectively. Then there is a $(b_1+b_2)\times (b_1+b_2)$-minor $H$ of $A$ of the form
    \[\begin{pmatrix}
    H_1&*\\0&H_2
    \end{pmatrix}.
    \]
    It follows that $\pi_n(H)$, ${n \in \N}$, is of the form
    \[\begin{pmatrix}
    \pi_n(H_1)&*\\0&\pi_n(H_2) 
    \end{pmatrix}. \]
    Therefore 
    \[\textup{Nrd}(H_1)\textup{Nrd}(H_2)=\varprojlim_n\textup{Nrd}\pi_n(H_1)\textup{Nrd}\pi_n(H_2)=\varprojlim_n\textup{Nrd}\pi_n(H)=\textup{Nrd}(H),\]
    which implies $\textup{Fitt}_\Lambda(M_1)\textup{Fitt}_\Lambda(M_3)\subseteq\textup{Fitt}_\Lambda(M_2)$.
     The second claim follows similarly from the corresponding property at finite level.
\end{proof}
We also have the natural exact sequence
\[0\longrightarrow J_\infty\longrightarrow \textup{Pic}_\infty\longrightarrow \Lambda/\textup{Aug}\longrightarrow 0,\]
where $\textup{Aug}$ denotes the augmentation ideal of ${\Lambda = \Z_p\llbracket G\rrbracket }$. 
\begin{lemma}
\label{augmentation}
    \[\textup{Fitt}_\Lambda(\Lambda/\textup{Aug})\supset \langle \textup{Nrd}(g-1)\mid g\in G\rangle_{I(\Lambda)}.\]
\end{lemma}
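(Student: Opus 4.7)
The plan is to exploit the freedom to choose any presentation of $\Lambda/\textup{Aug}$ when computing the Fitting invariant, and to arrange for each prescribed element $g-1$ to show up as an entry of the presentation matrix. Concretely, I would first fix topological generators $g_1,\dots,g_l$ of the uniform pro-$p$ group $G$. A standard application of the Nakayama Lemma~\ref{nakayama} to the module $\textup{Aug}/\textup{Aug}^2$ shows that $g_1-1,\dots,g_l-1$ generate $\textup{Aug}$ as a left $\Lambda$-module (their images already span $\textup{Aug}/\textup{Aug}^2$ because $G$ is topologically generated by the $g_i$, and this quotient coincides with the Frattini quotient by the maximality of $\mathfrak{m}$).

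Given an arbitrary element $g\in G$, I would then consider the presentation
\[
\Lambda^{l+1}\xrightarrow{\ h\ }\Lambda\longrightarrow \Lambda/\textup{Aug}\longrightarrow 0,
\]
where $h$ is represented by the $1\times(l+1)$ matrix
\[
H=(g_1-1,\,g_2-1,\,\dots,\,g_l-1,\,g-1).
\]
Since all entries lie in $\textup{Aug}$ and the first $l$ of them already generate $\textup{Aug}$, the image of $h$ equals the kernel of $\Lambda\longrightarrow \Lambda/\textup{Aug}$, so this is indeed a valid presentation. We have $b(h)=1\le l+1=a(h)$, so the definition of $\textup{Fitt}_\Lambda(\Lambda/\textup{Aug})$ allows us to use the $1\times 1$ submatrices of $H$. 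Taking the submatrix consisting of the last entry yields $(g-1)$, whose reduced norm is $\textup{Nrd}(g-1)$ (by definition of $\textup{Nrd}$ on $1\times 1$ matrices, and compatibility at finite level via Lemma~\ref{compatibility-of-reduced-norms}). Hence $\textup{Nrd}(g-1)\in \textup{Fitt}_\Lambda(\Lambda/\textup{Aug})$.

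Letting $g$ range over $G$ and using that $\textup{Fitt}_\Lambda(\Lambda/\textup{Aug})$ is an $I(\Lambda)$-submodule of $Z(\Q_p\llbracket G\rrbracket)$ gives the desired inclusion
\[
\langle \textup{Nrd}(g-1)\mid g\in G\rangle_{I(\Lambda)}\subseteq \textup{Fitt}_\Lambda(\Lambda/\textup{Aug}).
\]
The only non-trivial step is verifying that $g_1-1,\dots,g_l-1$ generate $\textup{Aug}$; everything else is a direct application of the definitions. The opposite inclusion, incidentally, is not claimed, which is consistent with the fact that for non-quadratic presentations different choices of generators can enlarge the Fitting invariant beyond what is produced by the distinguished generators $\{g-1\}_{g\in G}$.
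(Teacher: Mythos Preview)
Your argument is correct and follows essentially the same idea as the paper: exhibit a presentation of $\Lambda/\textup{Aug}$ with $b=1$ in which $g-1$ appears as an entry, so that $\textup{Nrd}(g-1)$ is one of the $1\times 1$ minors. The paper carries this out at each finite level $G^{(n)}$ (using all $\{g-1:g\in G^{(n)}\setminus\{1\}\}$ as generators of the augmentation ideal and then invoking Lemma~\ref{quadratical represented}), whereas you work directly over $\Lambda$ with the shorter presentation coming from topological generators; your Nakayama justification is slightly loosely phrased (it is $\textup{Aug}/\mathfrak{m}\textup{Aug}$, not $\textup{Aug}/\textup{Aug}^2$, that one identifies with the Frattini quotient), but the conclusion that the $g_i-1$ generate $\textup{Aug}$ is standard and correct.
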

\begin{proof}
    By Lemma~\ref{quadratical represented} it suffices to consider $\textup{Fitt}_{\Z_p[G^{(n)}]}(\Z_p[G^{(n)}]/\textup{Aug}(G^{(n)}))$ for every ${n \in \N}$, where ${\textup{Aug}(G^{(n)}) \subseteq \Z_p[G^{(n)}]}$ denote the augmentation ideals. Considering the presentation
    \[(\Z_p([G^{(n)}])^{\vert G^{(n)}\vert-1} \longrightarrow \Z_p[G^{(n)}]\longrightarrow \Z_p[G^{(n)}]/\textup{Aug}(G^{(n)}),\] where the right hand map is the canonical projection, we see that ${\textup{Nrd}(g-1)}$ is contained in $\textup{Fitt}_{\Z_p[G^{(n)}]}(\Z_p[G^{(n)}]/\textup{Aug}(G^{(n)}))$ for all $g$. 
    \end{proof}
\begin{lemma}
    \[\textup{Fitt}_\Lambda(J_\infty)\langle \textup{Nrd}(g-1)\mid g\in G\rangle_{I(\Lambda)}\subseteq \langle\textup{Nrd}(D-A_\alpha^t)\rangle_{I(\Lambda)}. \]
\end{lemma}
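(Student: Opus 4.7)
The plan is to combine the three immediately preceding results. The natural exact sequence
\[ 0 \longrightarrow J_\infty \longrightarrow \textup{Pic}_\infty \longrightarrow \Lambda/\textup{Aug} \longrightarrow 0 \]
is the obvious input, and the whole statement should come out of a straightforward chain of inclusions with no delicate step.

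First I would apply Lemma~\ref{behavior-under-exact-sequence} to this sequence. This yields
\[ \textup{Fitt}_\Lambda(J_\infty) \cdot \textup{Fitt}_\Lambda(\Lambda/\textup{Aug}) \;\subseteq\; \textup{Fitt}_\Lambda(\textup{Pic}_\infty). \]
Next I would invoke Lemma~\ref{maini} to identify the right-hand side with $\langle \textup{Nrd}(D - A_\alpha^t)\rangle_{I(\Lambda)}$. Finally, Lemma~\ref{augmentation} provides the inclusion $\langle \textup{Nrd}(g-1) \mid g \in G\rangle_{I(\Lambda)} \subseteq \textup{Fitt}_\Lambda(\Lambda/\textup{Aug})$, so that, multiplying by the (left) $I(\Lambda)$-ideal $\textup{Fitt}_\Lambda(J_\infty)$ and chaining the inclusions,
\[ \textup{Fitt}_\Lambda(J_\infty) \cdot \langle \textup{Nrd}(g-1) \mid g \in G\rangle_{I(\Lambda)} \;\subseteq\; \textup{Fitt}_\Lambda(J_\infty) \cdot \textup{Fitt}_\Lambda(\Lambda/\textup{Aug}) \;\subseteq\; \langle \textup{Nrd}(D - A_\alpha^t)\rangle_{I(\Lambda)}, \]
which is exactly the claim.

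The only point that merits a brief check is that the multiplicative manipulation of $I(\Lambda)$-ideals is legitimate here: since all three ideals involved are $I(\Lambda)$-submodules of the (commutative) center, the containment $A \subseteq B$ really does imply $CA \subseteq CB$. Beyond this, there is no genuine obstacle, so I do not expect any surprises in carrying out the proof.
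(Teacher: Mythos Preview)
Your proposal is correct and follows exactly the approach of the paper, which simply states that the lemma is a direct consequence of Lemmas~\ref{maini}, \ref{behavior-under-exact-sequence} and \ref{augmentation}. You have merely spelled out the chain of inclusions that the paper leaves implicit.
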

\begin{proof}
   This is a a direct consequence of Lemmas \ref{maini}, \ref{behavior-under-exact-sequence} and \ref{augmentation}. 
\end{proof}

\section{Relation to algebraic $K$-theory} \label{section:K-theory}  
In this section we restrict to uniform $p$-adic Lie groups $G$ that contain a normal subgroup $H$ such that $G/H\cong \Z_p$, and we will work under the assumption that $\textup{Pic}_\infty$ has the $\mathfrak{M}_H(G)$-property (here, as usual, $X_\infty$ is the derived graph of a voltage assignment $\alpha$ which takes values in $G$). In Iwasawa theory one often formulates (non-commutative) Iwasawa main conjectures in terms of algebraic $K$-theory, see e.g. \cite{ritter-weiss2}, \cite{ritter-weiss}, \cite{kakde}, \cite{5people} or \cite{burns}. In the present section we compare this approach with the one presented above. In the next section we will prove criteria which ensure that the Iwasawa modules $\textup{Pic}_\infty$ and $J_\infty$ satisfy the $\mathfrak{M}_H(G)$-property.  

Let as before $\Lambda=\Z_p\llbracket G\rrbracket $ and let $S$ be the set of all elements $f\in \Lambda$ such that $\Lambda/f$ is finitely generated as $\Z_p\llbracket H\rrbracket $-module. Clearly, $S$ is closed under multiplication. Let $S^*=\bigcup_kp^kS$. As $p$ lies in the center of $\Lambda$, it is immediate that $S^*$ is closed under multiplication as well.

Note that a finitely generated $\Lambda$-module $M$ satisfies the $\mathfrak{M}_H(G)$-property if and only if it is $S$-torsion (see \cite[Proposition~2.3]{5people}). Now we introduce some standard notation from algebraic $K$-theory \cite[Chapters II and III]{k-book}.
\begin{def1}
Let $\mathbf{P}(\Lambda)$ be the category of projective $\Lambda$-modules. Let $\textbf{Ch}^b\mathbf{P}(\Lambda)$ be the category of bounded complexes in $\mathbf{P}(\Lambda)$. Let $\textbf{Ch}^b_{S^*}\mathbf{P}(\Lambda)$ be the Waldhausen subcategory of complexes that are $(S^*)^{-1}\Lambda$ exact. We will also use the notation $K_0(\Lambda \on S^*)$ for $K_0(\textbf{Ch}^b_{S^*}\mathbf{P}(\Lambda))$.
\end{def1}

Let $K_1(\Lambda_{S^*})$ be the $K_1$-group of the localization $(S^*)^{-1}\Lambda$. Then classical algebraic $K$-theory gives us a differential operator
\[\partial\colon K_1(\Lambda_{S^*})\longrightarrow K_0(\Lambda \on S^*).\]
This map sends the class of a homomorphism $\alpha \colon \Lambda^m\longrightarrow \Lambda ^m$ that is invertible as an element in $\textup{Mat}_{m\times m}(\Lambda_{S^*})$ to $[\textup{cone}(\alpha)]$, i.e. to the complex concentrated in $0$ and $1$ given by $-\alpha \colon \Lambda ^m \longrightarrow \Lambda^m$.

By abuse of notation we denote by $\mathfrak{M}_H(G)$ the full subcategory of finitely generated $\Lambda$-modules which satisfy the $\mathfrak{M}_H(G)$-property. Let $M$ be a module satisfying the $\mathfrak{M}_H(G)$-property and let 
\[\alpha\colon\Lambda^a\longrightarrow \Lambda^b \longrightarrow M\]
be a presentation with $a\ge b$. We then obtain a $(S^*)^{-1}\Lambda$-exact complex
\[0\longrightarrow \Lambda^{a-b}\longrightarrow \Lambda^a\longrightarrow \Lambda_{S^*}^b\longrightarrow 0.\]
In the case that $M$ is quadratically presented the first term vanishes and 
\[0\longrightarrow \Lambda^a\longrightarrow \Lambda^a\longrightarrow 0\]
lies in the image of $\partial$.

For the remainder of this section we make the following assumption: 
\begin{ass}
$\textup{Pic}_\infty$ satisfies the $\mathfrak{M}_H(G)$-property. 
\end{ass}
Recall that  each finite-dimensional representation $\rho$ that is defined over some commutative ring $O$ of characteristic zero can be seen as a homomorphism
\[\rho \colon \Lambda \longrightarrow \textup{Mat}_{d\times d}(O),\]
where $d=\dim(\rho)$. Then $\rho$ induces a well-defined homomorphism 
\[\rho \colon K_1(\Lambda)\longrightarrow K_1(\textup{Mat}_{d\times d}(O))=O^\times.\]
The isomorphism $K_1(\textup{Mat}_{d\times d}(O))=O^\times$ is given by the determinant.
Let $L$ be the quotient field of $O$. Then $\rho$ induces a well-defined homomorphism \cite[Section 3]{5people}
\[\rho\colon K_1(\Lambda_{S^*})\longrightarrow L\cup\{\infty\}.\]

We formulate the $K$-theoretic main conjecture (in analogy with the classical case \cite{coates-et-all}) as follows (we use the notation from Section~\ref{section:Ihara-L-functions}). 
\begin{thm} \label{thm:K-theory} 
Let $G$ be a uniform $p$-adic Lie group and let $X_\infty$ be the derived graph over $X$ of a voltage assignment ${\alpha \colon S \longrightarrow G}$. 

The matrix $D-A_\alpha^t$ gives an element in $K_1(\Lambda_{S^*})$, denoted by $[D-A_\alpha^t]$. Then $[D-A_\alpha^t]$ corresponds to a $p$-adic L-function in the following sense: If $\rho$ is a finite-dimensional representation of $G$ factoring through $G^{(n)}$ for some $n$ and being defined over $O$, then 
\[\rho([D-A_\alpha^t])=h(\rho,X_n/X,1) \]
as elements in $L\cup\{\infty\}$, where $h(\rho, X_n/X,1)$ is defined as in Section~\ref{section:Ihara-L-functions}. 
Furthermore, $\partial([D-A_\alpha^t])=[\textup{Pic}_\infty]$.
\end{thm}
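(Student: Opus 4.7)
The plan is to verify each of the three assertions of the theorem in turn, relying on the structural results already established in the earlier sections. First I would check that $D - A_\alpha^t$ really does define a class in $K_1(\Lambda_{S^*})$. Viewed as a $\Lambda$-linear endomorphism $\Lambda^m \longrightarrow \Lambda^m$, the matrix $D-A_\alpha^t$ fits into the exact sequence
\[ 0 \longrightarrow \Lambda^m \stackrel{D-A_\alpha^t}{\longrightarrow} \Lambda^m \longrightarrow \textup{Pic}_\infty \longrightarrow 0 \]
coming from Corollary~\ref{cor:J_infty} together with Lemma~\ref{lem:free}, where the left-hand $\Lambda^m$ is identified with $\textup{Pr}_\Lambda$ via the generators $p_i$. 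By the standing $\mathfrak{M}_H(G)$-assumption on $\textup{Pic}_\infty$ and the equivalent characterization recalled before the theorem (cf.~\cite[Proposition~2.3]{5people}), $\textup{Pic}_\infty$ is $S^*$-torsion. Therefore the localized map becomes surjective, hence an isomorphism between free $\Lambda_{S^*}$-modules of equal rank, so $D-A_\alpha^t \in GL_m(\Lambda_{S^*})$ and gives a well-defined class $[D-A_\alpha^t] \in K_1(\Lambda_{S^*})$.

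Next I would establish the interpolation property. Since $\rho$ factors through $G^{(n)}$, the map $\rho \colon K_1(\Lambda_{S^*}) \longrightarrow L \cup \{\infty\}$ defined in \cite[Section~3]{5people} factors via the determinant, and a direct unpacking of the definition gives $\rho([D-A_\alpha^t]) = \det(\rho(D-A_\alpha^t))$. Corollary~\ref{cor:interpolation} then identifies this determinant with $h(\rho, X_n/X, 1)$, as required.

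Finally, the identity $\partial([D-A_\alpha^t]) = [\textup{Pic}_\infty]$ follows from the explicit description of the boundary map in the paragraph just before the theorem: $\partial$ sends $[D - A_\alpha^t]$ to the class in $K_0(\Lambda \on S^*)$ of the two-term complex $\Lambda^m \stackrel{-(D-A_\alpha^t)}{\longrightarrow} \Lambda^m$ concentrated in degrees $0$ and $1$. In view of the exact sequence above, this complex is quasi-isomorphic to $\textup{Pic}_\infty$ concentrated in degree $0$, the sign does not affect the class in $K_0$, and the $\mathfrak{M}_H(G)$-property ensures that the complex lies in $\textbf{Ch}^b_{S^*}\mathbf{P}(\Lambda)$. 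The main obstacle here is not computational but conceptual: one must keep track of the various identifications (the presentation of $\textup{Pic}_\infty$ by $D-A_\alpha^t$, the factorization of $\rho$ through a finite level, and the compatibility of the reduced-norm-like determinant at the $K_1$-level) so that the matrix which presents $\textup{Pic}_\infty$ is recognised both as its own $K_1$-class and as the element whose boundary is $[\textup{Pic}_\infty]$. Once the exact sequence above is in place, all three assertions fall out of formal properties of the Waldhausen $K$-theory of $\Lambda$.
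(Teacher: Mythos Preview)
Your proposal is correct and follows essentially the same approach as the paper: both use the quadratic presentation of $\textup{Pic}_\infty$ by $D-A_\alpha^t$ (from Corollary~\ref{cor:J_infty} and Lemma~\ref{lem:free}) together with the standing $\mathfrak{M}_H(G)$-assumption to obtain the class in $K_1(\Lambda_{S^*})$ and its image under $\partial$, and both invoke Corollary~\ref{cor:interpolation} for the interpolation statement. Your write-up is in fact a bit more explicit than the paper's in justifying invertibility over $\Lambda_{S^*}$ and the identification of the cone with $[\textup{Pic}_\infty]$.
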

\begin{proof} 
Recall that $\textup{Pic}_\infty$ is quadratically presented and that the presentation $h$ is given by the matrix $D-A_\alpha^t$. Note that multiplication with $D-A_\alpha^t$ is injective on $\textup{Div}_\Lambda$. Thus, $D-A_\alpha^t$ is invertible as an element in $\textup{GL}_m(\Lambda_{S^*})$ and therefore it has a well-defined image in $K_1(\Lambda_{S^*})$. Furthermore, $\partial([D-A_\alpha^t])$ is indeed that class of $[\textup{Pic}_\infty]$. So to prove the above theorem, we only have to prove that $[D-A_\alpha^t]$ interpolates the required $L$-value.

\begin{lemma}
\[\rho([D-A_\alpha^t])=h(\rho, X_n/X,1).\]
\end{lemma}
\begin{proof}
This is a reformulation of Corollary \ref{cor:interpolation}.
\end{proof}
This also concludes the proof of Theorem~\ref{thm:K-theory}. \end{proof} 
\begin{rem}
Note that the $K$-theoretic main conjecture provides slightly more information than the one in terms of Fitting invariants: As the reduced norm is trivial on the commutator subgroup, the reduced norm can be defined on $K_1(\Lambda_{S^*})$. So taking reduced norms somehow loses information. On the other hand the Fitting invariant contains less information than the isomorphism class $[\textup{Pic}_\infty]$ in $K_0(\Lambda \textup{ on } S^*)$.
\end{rem}

\section{The $\mathfrak{M}_H(G)$-property.} \label{section:MHG} 
Let $G$ be a uniform $p$-group, and let $X_\infty/X$ be a voltage cover to a voltage assignment ${\alpha \colon S \longrightarrow G}$. The $\mathfrak{M}_H(G)$-property was a crucial assumption in the previous section. In this section we will give necessary conditions for this property to hold.

Let as before $H\subseteq G$ be a normal subgroup such that ${G/H\cong \Z_p}$. In this section we study the $\mathfrak{M}_H(G)$-property for $\textup{Pic}_\infty$ and $J_\infty$. In view of the exact sequence 
\[ 0 \longrightarrow J_\infty \longrightarrow \textup{Pic}_\infty \longrightarrow \Lambda/\textup{Aug} \longrightarrow 0, \] 
where $\textup{Aug}$ means the augmentation ideal of $\Lambda$, Lemma~\ref{lemma:MHG} implies that $J_\infty$ satisfies the $\mathfrak{M}_H(G)$-property if and only if $\textup{Pic}_\infty$ satisfies the $\mathfrak{M}_H(G)$-property. 

Let $\Lambda_1=\Z_p\llbracket G/H\rrbracket $. Let $Y$ be the unique Galois cover of $X$ such that 
$$\Gal(X_\infty/Y)=H$$ 
and ${\Gal(Y/X) \cong G/H}$. We write $\textup{Pr}_{\Lambda_1}$ for $\textup{Pr}(Y)\otimes \Lambda_1$, and we define $\textup{Div}_{\Lambda_1}$ analogously. We write $\textup{Pic}_Y$ for $\textup{Div}_{\Lambda_1}/\textup{Pr}_{\Lambda_1}$. We have already seen in Lemma \ref{lem:free} that $\textup{Div}_\Lambda$, $\textup{Pr}_\Lambda$, $\textup{Div}_{\Lambda_1}$ and $\textup{Pr}_{\Lambda_1}$ are free over $\Lambda$ and $\Lambda_1$, respectively. If one looks carefully at the generators of these modules one even sees that they are generated by the same sets of elements. 
\begin{lemma} \label{lemma:control} 
Let $\Omega\subseteq \Lambda$ be the kernel of the natural projection ${\Lambda\twoheadrightarrow \Lambda_1}$. Then we have an isomorphism
\[\textup{Pic}_\infty/\Omega\textup{Pic}_\infty\longrightarrow \textup{Pic}_Y. \]
\end{lemma}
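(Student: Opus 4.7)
The plan is to reduce the statement to the fundamental short exact sequence
\[ 0 \longrightarrow \textup{Pr}_\Lambda \longrightarrow \textup{Div}_\Lambda \longrightarrow \textup{Pic}_\infty \longrightarrow 0 \]
from Corollary~\ref{cor:J_infty} and to apply the right exact functor $-\otimes_\Lambda \Lambda_1$ (which is the same as quotienting by $\Omega$, since $\Lambda_1 = \Lambda/\Omega$). Right-exactness gives
\[ \textup{Pr}_\Lambda/\Omega \textup{Pr}_\Lambda \longrightarrow \textup{Div}_\Lambda/\Omega \textup{Div}_\Lambda \longrightarrow \textup{Pic}_\infty/\Omega \textup{Pic}_\infty \longrightarrow 0, \]
so the point is to identify the first two modules with $\textup{Pr}_{\Lambda_1}$ and $\textup{Div}_{\Lambda_1}$, respectively, and to check that the left map remains injective.

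For the identifications, first observe that $\textup{Div}_\Lambda$ is $\Lambda$-free on the vertices $v_1,\dots,v_m$ of $X$, and the definition of the voltage cover $Y$ uses precisely the composed voltage assignment $S \xrightarrow{\alpha} G \twoheadrightarrow G/H$. Hence $\textup{Div}_{\Lambda_1}$ is $\Lambda_1$-free on the same generators, and reduction modulo $\Omega$ matches them up, giving $\textup{Div}_\Lambda/\Omega \textup{Div}_\Lambda \cong \textup{Div}_{\Lambda_1}$. Similarly, Lemma~\ref{lem:free} tells us that $\textup{Pr}_\Lambda$ is $\Lambda$-free on the principal divisors $p_{1,0},\dots,p_{m,0}$, and these elements reduce modulo $\Omega$ to the corresponding principal divisors of $Y$. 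Thus $\textup{Pr}_\Lambda/\Omega \textup{Pr}_\Lambda \cong \textup{Pr}_{\Lambda_1}$.

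Next, I need injectivity of $\textup{Pr}_{\Lambda_1} \longrightarrow \textup{Div}_{\Lambda_1}$. Here I apply Lemma~\ref{lem:free} to the voltage cover $Y/X$: by the result of Kundu and Lim cited after Definition~\ref{def:inv}, the quotient $G/H \cong \Z_p$ is a uniform $p$-adic Lie group (of dimension one), each intermediate cover of $Y/X$ is connected (being an intermediate cover of the connected $X_n$), and the proof of Lemma~\ref{lem:free} goes through verbatim with $\Lambda_1$ and the finite quotients $(G/H)^{(n)}$. This yields an injection $\textup{Pr}_{\Lambda_1} \hookrightarrow \textup{Div}_{\Lambda_1}$ with cokernel $\textup{Pic}_Y$ by definition.

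Putting everything together, the three-term exact sequence above becomes
\[ 0 \longrightarrow \textup{Pr}_{\Lambda_1} \longrightarrow \textup{Div}_{\Lambda_1} \longrightarrow \textup{Pic}_\infty/\Omega \textup{Pic}_\infty \longrightarrow 0, \]
so $\textup{Pic}_\infty/\Omega \textup{Pic}_\infty \cong \textup{Div}_{\Lambda_1}/\textup{Pr}_{\Lambda_1} = \textup{Pic}_Y$. The only subtle point is the injectivity of the reduced map $\textup{Pr}_{\Lambda_1} \hookrightarrow \textup{Div}_{\Lambda_1}$; it is not automatic from right-exactness of the tensor product, but follows from the direct application of Lemma~\ref{lem:free} to $Y/X$ sketched above. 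The matching of generators is formal once one observes that $\alpha$ and its reduction modulo $H$ define the two covers compatibly.
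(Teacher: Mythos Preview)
Your proof is correct and takes essentially the same approach as the paper: both rely on the short exact sequences of Corollary~\ref{cor:J_infty} for $X_\infty/X$ and for $Y/X$, together with the observation (stated in the paper just before the lemma) that $\textup{Pr}_\Lambda$, $\textup{Div}_\Lambda$, $\textup{Pr}_{\Lambda_1}$, $\textup{Div}_{\Lambda_1}$ are free on the same respective sets of generators. The only cosmetic difference is that the paper packages this as a two-row commutative diagram and applies the snake lemma to compute $\ker(\textup{Pic}_\infty \to \textup{Pic}_Y)=\Omega\,\textup{Pic}_\infty$, whereas you tensor the top row with $\Lambda_1$ and invoke Lemma~\ref{lem:free} for $Y/X$ to recover injectivity on the left; these are two presentations of the same argument.
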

\begin{proof}
Consider the commutative diagram
\begin{align*} 
\xymatrix{0\ar[r]&\textup{Pr}_\Lambda\ar[r]\ar[d]&\textup{Div}_\Lambda\ar[r]^\pi \ar[d] & \textup{Pic}_\infty\ar[d]^r\ar[r]&0 \\
0\ar[r]&\textup{Pr}_{\Lambda_1}\ar[r]&\textup{Div}_{\Lambda_1}\ar[r]& \textup{Pic}_Y\ar[r]&0.}  
\end{align*}

All vertical maps are surjective. Applying the snake lemma gives an exact sequence 
\[ 0 \longrightarrow {\Pr}_\Lambda \cap \Omega \cdot \Div_\Lambda \longrightarrow \Omega \cdot \Div_\Lambda \longrightarrow \ker(r) \longrightarrow 0.\] 
Therefore ${\ker(r) = \pi(\Omega \Div_\Lambda) =\Omega {\textup{Pic}}_\infty}$, and we obtain the claim.
\end{proof}
As an immediate corollary we obtain
\begin{corollary}
\label{cor:mhg}
If ${\mu_{\Lambda_1}(\textup{Pic}_Y) = 0}$, then the $\mathfrak{M}_H(G)$-property holds for ${\textup{Pic}_\infty}$. 
\end{corollary}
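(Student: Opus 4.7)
The plan is to combine Lemma~\ref{lemma:control} with a topological Nakayama argument over the complete local Noetherian ring $\Z_p\llbracket H\rrbracket$. Since $H$ is itself uniform (as noted after Definition~\ref{def:inv}), this ring is local with maximal ideal $\mathfrak{m}_H = (p, \omega_H)$, where $\omega_H$ denotes the augmentation ideal of $\Z_p\llbracket H\rrbracket$. Regarding $\textup{Pic}_\infty$ as a $\Z_p\llbracket H\rrbracket$-module via $\Z_p\llbracket H\rrbracket \subseteq \Lambda$, one has the identification $\omega_H \cdot \textup{Pic}_\infty = \Omega \cdot \textup{Pic}_\infty$, since $\Omega$ is the two-sided ideal of $\Lambda$ generated by $\{h - 1 : h \in H\}$. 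Combining this with Lemma~\ref{lemma:control} and then reducing modulo $p$ yields
\[ \textup{Pic}_\infty/\mathfrak{m}_H \textup{Pic}_\infty \;\cong\; \textup{Pic}_Y/p\textup{Pic}_Y. \]

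Next I would verify that the right-hand side is finite. Applying the arguments of Lemma~\ref{lem:free} and Corollary~\ref{cor:J_infty} to the intermediate cover $Y/X$ (whose Galois group $G/H \cong \Z_p$ is uniform), $\textup{Pic}_Y$ fits into an exact sequence $0 \to \textup{Pr}_{\Lambda_1} \to \textup{Div}_{\Lambda_1} \to \textup{Pic}_Y \to 0$ with both outer terms $\Lambda_1$-free of the same rank, so $\textup{Pic}_Y$ is a finitely generated torsion $\Lambda_1$-module. The Iwasawa structure theorem over $\Lambda_1 \cong \Z_p\llbracket T\rrbracket$ then implies: since $\mu_{\Lambda_1}(\textup{Pic}_Y) = 0$, up to pseudo-isomorphism only elementary summands $\Lambda_1/(f_j^{b_j})$ with $f_j$ distinguished polynomials appear in $\textup{Pic}_Y$, and each mod-$p$ reduction $\F_p\llbracket T\rrbracket/(\bar f_j^{b_j})$ is finite. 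Hence $\textup{Pic}_Y/p\textup{Pic}_Y$ is finite.

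It then remains to apply a topological Nakayama lemma for $\Z_p\llbracket H\rrbracket$: since $\textup{Pic}_\infty$ is compact as a $\Z_p\llbracket H\rrbracket$-module (being finitely generated over the larger ring $\Lambda$), any lifts of a finite $\F_p$-generating set of $\textup{Pic}_\infty/\mathfrak{m}_H\textup{Pic}_\infty$ generate $\textup{Pic}_\infty$ as a $\Z_p\llbracket H\rrbracket$-module, by the standard closed-submodule argument for compact modules over complete Noetherian local rings. This yields the $\mathfrak{M}_H(G)$-property for $\textup{Pic}_\infty$ directly, with no need to pass first to the quotient by $\textup{Pic}_\infty[p^\infty]$.

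The step I expect to require the most care is the Nakayama step: Lemma~\ref{nakayama} as stated applies to modules finitely generated over $\Lambda$, whereas here we must deduce finite generation over the smaller ring $\Z_p\llbracket H\rrbracket$ from compactness together with the finiteness of $\textup{Pic}_\infty/\mathfrak{m}_H\textup{Pic}_\infty$. This calls for the variant of Nakayama valid for compact topological modules over a complete Noetherian local ring, and one must verify that the topology on $\textup{Pic}_\infty$ viewed as a $\Z_p\llbracket H\rrbracket$-module is indeed the appropriate compact one inherited from $\Lambda$. Once this is pinned down the remainder of the argument reduces to the chain of identifications above.
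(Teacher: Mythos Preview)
Your argument is correct and follows essentially the same route as the paper: combine Lemma~\ref{lemma:control} with the fact that a torsion $\Lambda_1$-module with vanishing $\mu$-invariant is finitely generated over $\Z_p$, then apply Nakayama over $\Z_p\llbracket H\rrbracket$. You rightly flag that the Nakayama step needs the compact-module variant rather than Lemma~\ref{nakayama} as literally stated; the paper glosses over this point and simply invokes Lemma~\ref{nakayama}.
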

\begin{proof} 
  Since $\textup{Pic}_Y$ is a torsion $\Lambda_1$-module, the assumption ${\mu_{\Lambda_1}(\textup{Pic}_Y) = 0}$ implies that $\textup{Pic}_Y$ is finitely generated over $\Z_p$, and thus the same holds true for $\textup{Pic}_\infty/\Omega \textup{Pic}_\infty$ in view of Lemma~\ref{lemma:control}. By the definition of $\Omega$, it follows from Nakayama's Lemma~\ref{nakayama} that $\textup{Pic}_\infty$ is finitely generated as a $\Z_p\llbracket H\rrbracket $-module. 
\end{proof} 

In the setting of the above corollary, it follows from the proof that in fact ${\mu_\Lambda(\textup{Pic}_\infty) = 0 = \mu_{\Lambda_1}(Y)}$. We can prove a more general statement. 
\begin{thm} \label{thm:MHG-iff} 
   If the $\mathfrak{M}_H(G)$-property holds for $\textup{Pic}_\infty$, then 
   \[ \mu_{\Lambda}(\textup{Pic}_\infty) = \mu_{\Lambda_1}(\textup{Pic}_Y). \]
\end{thm}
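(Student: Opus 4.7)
The plan is to use Lemma~\ref{lemma:mu}(b) applied to $N := \textup{Pic}_\infty$ together with an explicit computation of the $H$-homology of $\textup{Pic}_\infty$ that comes from its two-term free $\Lambda$-resolution. Combining Corollary~\ref{cor:J_infty} with Lemma~\ref{lem:free} one has the short exact sequence
\[
0 \longrightarrow \Lambda^a \xrightarrow{D - A_\alpha^t} \Lambda^a \longrightarrow \textup{Pic}_\infty \longrightarrow 0, \qquad a = \lvert V(X)\rvert .
\]
Since $\Lambda$ is faithfully flat over $\Z_p\llbracket H\rrbracket$ (a standard fact for compact $p$-adic Lie groups and their closed subgroups), this is also a flat $\Z_p\llbracket H\rrbracket$-resolution of $\textup{Pic}_\infty$, and I would use it to compute $H_i(H, \textup{Pic}_\infty) = \textup{Tor}_i^{\Z_p\llbracket H\rrbracket}(\Z_p, \textup{Pic}_\infty)$.

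Applying $\Z_p \otimes_{\Z_p\llbracket H\rrbracket} - = \Lambda_1 \otimes_\Lambda -$ and invoking Lemma~\ref{lemma:control}, one reads off
\[
H_0(H, \textup{Pic}_\infty) \cong \textup{Pic}_Y, \qquad H_i(H, \textup{Pic}_\infty) = 0 \text{ for } i \ge 2,
\]
together with $H_1(H, \textup{Pic}_\infty) \cong \ker\bigl(\bar D - \bar A_\alpha^t \colon \Lambda_1^a \to \Lambda_1^a\bigr)$. The analogue of Lemma~\ref{lem:free} applied to the $\Z_p$-cover $Y/X$ gives that $\textup{Pr}_{\Lambda_1}$ is $\Lambda_1$-free of rank $a$ and embeds into $\textup{Div}_{\Lambda_1} \cong \Lambda_1^a$ via $\bar D - \bar A_\alpha^t$. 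Hence this map is injective, $H_1(H, \textup{Pic}_\infty) = 0$, and consequently
\[
\sum_{i \ge 0} (-1)^i \mu_\Gamma(H_i(H, N)) = \mu_\Gamma(\textup{Pic}_Y) = \mu_{\Lambda_1}(\textup{Pic}_Y).
\]

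The $\mathfrak{M}_H(G)$-assumption enters only through the following observation. Set $N_f := N/N[p^\infty]$. By hypothesis $N_f$ is finitely generated over $\Z_p\llbracket H\rrbracket$, which is Noetherian of finite global dimension, so a finite free $\Z_p\llbracket H\rrbracket$-resolution of $N_f$ shows that every $H_i(H, N_f)$ is finitely generated over $\Z_p$; in particular $\mu_\Gamma(H_i(H, N_f)) = 0$ for all $i$. The short exact sequence $0 \to N[p^\infty] \to N \to N_f \to 0$ induces a long exact sequence in $H$-homology, which terminates in finitely many steps since $H$ has finite cohomological dimension, and all terms occurring are $\Lambda_1$-torsion. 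Splitting into short exact sequences and applying additivity of $\mu_\Gamma$ yields
\[
\sum_{i \ge 0} (-1)^i \mu_\Gamma(H_i(H, N)) = \sum_{i \ge 0} (-1)^i \mu_\Gamma(H_i(H, N[p^\infty])) + \sum_{i \ge 0} (-1)^i \mu_\Gamma(H_i(H, N_f)).
\]
The left-hand side equals $\mu_{\Lambda_1}(\textup{Pic}_Y)$ by the previous step, the first sum on the right equals $\mu_\Lambda(\textup{Pic}_\infty)$ by Lemma~\ref{lemma:mu}(b), and the second sum vanishes. This delivers the equality $\mu_\Lambda(\textup{Pic}_\infty) = \mu_{\Lambda_1}(\textup{Pic}_Y)$.

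The main technical hurdle is the vanishing of $H_1(H, \textup{Pic}_\infty)$: it hinges on both the quadratic free presentation of $\textup{Pic}_\infty$ and the injectivity of $\bar D - \bar A_\alpha^t$ on $\Lambda_1^a$, which in turn requires the analogue of Lemma~\ref{lem:free} for the $\Z_p$-cover $Y/X$ together with the fact that $\Lambda_1 \cong \Z_p\llbracket T\rrbracket$ is a domain. The role of the $\mathfrak{M}_H(G)$-hypothesis is then cleanly localised in the very last step, where it is used solely to discard the $N_f$-contribution in the alternating sum.
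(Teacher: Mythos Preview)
Your proof is correct and follows essentially the same route as the paper's: both use the two-term free $\Lambda$-resolution of $\textup{Pic}_\infty$ together with the flatness (equivalently, cohomological triviality) of $\Lambda$ over $\Z_p\llbracket H\rrbracket$ to reduce the $H$-homology computation to $H_0(H,\textup{Pic}_\infty)\cong\textup{Pic}_Y$, then invoke Lemma~\ref{lemma:mu}(b) and use the $\mathfrak{M}_H(G)$-hypothesis to show that the $N_f$-terms contribute nothing to the alternating $\mu$-sum. You are in fact slightly more explicit than the paper about why $H_1(H,\textup{Pic}_\infty)$ vanishes, spelling out the injectivity of $\bar D-\bar A_\alpha^t$ on $\Lambda_1^a$ via the analogue of Lemma~\ref{lem:free} for $Y/X$, whereas the paper leaves this step implicit.
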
 
\begin{proof} 
  We follow the approach from the proof of \cite[Proposition~4.7]{Lim_MHG}, with necessary changes. 
  To ease notation, we let ${X = \textup{Pic}_\infty}$, ${X_f = X/X[p^\infty]}$. Starting from the exact sequence 
  \[ 0 \longrightarrow X[p^\infty] \longrightarrow X \longrightarrow X_f \longrightarrow 0, \] 
  we obtain a long exact sequence of $H$-homology groups: 
  \[ \ldots \longrightarrow H_{i+1}(H,X) \longrightarrow H_{i+1}(H, X_f) \longrightarrow H_i(H, X[p^\infty]) \longrightarrow H_i(H,X) \longrightarrow \ldots \] 
  \[ \ldots \longrightarrow H_1(H,X) \longrightarrow H_1(H, X_f) \longrightarrow H_0(H, X[p^\infty]) \longrightarrow H_0(H,X) \longrightarrow H_0(H, X_f) \longrightarrow 0.\] 

  By hypothesis, ${H_0(H, X_f) = (X_f)_H}$ is finitely generated over $\Z_p$ and therefore it is a torsion $\Lambda_1$-module. Since also $H_0(H,X[p^\infty])$ is finitely generated over $\Lambda_1$, and obviously also torsion, it follows that $H_0(H,X)$ is a finitely generated torsion $\Lambda_1$-module. 
  
  More generally, since $X_f$ is finitely generated over $\Z_p\llbracket H\rrbracket $ by Nakayama's Lemma, and as $H$ is a compact pro-$p$, $p$-adic Lie group without $p$-torsion, it follows from \cite[Lemma~3.2.3]{Lim-Sharifi} (see also \cite[proof of Theorem~1.1, p.~638]{howson}) that each $H_i(H,X_f)$ is finitely generated over $\Z_p$.  
  In particular, all the $H_i(H, X_f)$ are torsion $\Lambda_1$-modules. This implies that all the terms in the above long exact sequence are finitely generated and torsion $\Lambda_1$-modules. 
  
  It follows from the additivity of $\mu_{\Lambda_1}$-invariants on exact sequences of finitely generated torsion $\Lambda_1$-modules that 
  \[ \mu_{\Lambda_1}(H_i(H, X[p^\infty])) = \mu_{\Lambda_1}(H_i(H, X)) \] 
  for each ${i \in \N}$. 
  Using Lemma~\ref{lemma:mu}(b), we may conclude that 
  \begin{eqnarray*} \mu_\Lambda(X) & = & \sum_{i \ge 0} (-1)^i  \mu_{\Lambda_1}(H_i(H, X[p^\infty])) \\ & = & \mu_{\Lambda_1}(H_0(H, X)) + \sum_{i \ge 1} (-1)^{i} \mu_{\Lambda_1}(H_i(H, X)). \end{eqnarray*} 
  It follows from our control theorem (Lemma~\ref{lemma:control}) that the first summand equals ${\mu_{\Lambda_1}(\textup{Pic}_Y})$. On the other hand, the long exact $H$-homology sequence attached to the short exact sequence 
  \[ 0 \longrightarrow \Lambda^a \longrightarrow \Lambda^a \longrightarrow X \longrightarrow0 \]
  and the fact that $\Lambda$ is cohomologically trivial as a $\Z_p\llbracket H\rrbracket $-module because it is (compactly) induced imply that ${H_i(H,X) = 0}$ for each ${i \ge 1}$. This proves the theorem. 
\end{proof} 
In the special case where the dimension of $G$ is 2, we can prove an if-and-only-if statement. 
\begin{corollary} \label{cor:MHG-iff} 
   If ${H \cong \Z_p}$, then the following statements hold. \begin{compactenum}[(a)] 
      \item $\mu_{\Lambda}(\textup{Pic}_\infty) \le \mu_{\Lambda_1}(\textup{Pic}_Y)$. 
      \item The $\mathfrak{M}_H(G)$-property holds for $\textup{Pic}_\infty$ if and only if 
   \[ \mu_{\Lambda}(\textup{Pic}_\infty) = \mu_{\Lambda_1}(\textup{Pic}_Y). \] 
   \end{compactenum} 
\end{corollary}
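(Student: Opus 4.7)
The plan is to deduce both (a) and (b) from a careful analysis of $\mu$-invariants along the long exact sequence of $H$-homology attached to
\[
0 \longrightarrow X[p^\infty] \longrightarrow X \longrightarrow X_f \longrightarrow 0,
\]
where I abbreviate $X := \textup{Pic}_\infty$ and $X_f := X/X[p^\infty]$. Since $H \cong \Z_p$ has cohomological dimension one, $H_i(H,-)$ vanishes identically for $i \ge 2$. Combined with the quadratic free presentation $0 \to \Lambda^a \to \Lambda^a \to X \to 0$ from Lemma~\ref{lem:free} and the cohomological triviality of $\Lambda$ as a $\Z_p\llbracket H\rrbracket$-module (already exploited in the proof of Theorem~\ref{thm:MHG-iff}), this yields $H_i(H,X)=0$ for every $i \ge 1$ and, inserting this into the long exact sequence, also $H_1(H,X[p^\infty])=0$. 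What survives is the four-term exact sequence
\[
0 \longrightarrow H_1(H,X_f) \longrightarrow H_0(H,X[p^\infty]) \longrightarrow H_0(H,X) \longrightarrow H_0(H,X_f) \longrightarrow 0,
\]
whose third term is $X_H \cong \textup{Pic}_Y$ by the control theorem (Lemma~\ref{lemma:control}).

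Next I would combine Lemma~\ref{lemma:mu}(b) with the additivity of $\mu_{\Lambda_1}$ on short exact sequences of finitely generated $\Lambda_1$-modules. Since $H_1(H,X[p^\infty])=0$, the former gives $\mu_\Lambda(X) = \mu_{\Lambda_1}(H_0(H,X[p^\infty]))$; the latter, applied to the four-term exact sequence above, produces
\[
\mu_{\Lambda_1}(H_0(H,X[p^\infty])) = \mu_{\Lambda_1}(\textup{Pic}_Y) - \mu_{\Lambda_1}(H_0(H,X_f)) + \mu_{\Lambda_1}(H_1(H,X_f)).
\]
The decisive observation is that $H_1(H,X_f)$ is the kernel of $\gamma-1$ acting on $X_f$, for $\gamma$ a topological generator of $H$; being a $\Lambda$-submodule of the $p$-torsion-free module $X_f$, it is itself $p$-torsion-free and therefore has $\mu_{\Lambda_1}$-invariant zero. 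Assembling these identities yields
\[
\mu_{\Lambda_1}(\textup{Pic}_Y) - \mu_\Lambda(X) = \mu_{\Lambda_1}(H_0(H,X_f)) \ge 0,
\]
which is exactly part~(a).

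For part~(b), the same identity shows that equality holds if and only if $\mu_{\Lambda_1}((X_f)_H) = 0$. Because $\Lambda_1 = \Z_p\llbracket T\rrbracket$ is a two-dimensional regular local ring in which every pseudo-null module is finite, the structure theorem for finitely generated torsion $\Lambda_1$-modules forces $\mu_{\Lambda_1}((X_f)_H) = 0$ to be equivalent to $(X_f)_H$ being finitely generated over $\Z_p$. By a standard topological Nakayama argument for the compact $\Z_p\llbracket H\rrbracket$-module $X_f$ (a variant of Lemma~\ref{nakayama}, noting that $\Omega\cdot X_f$ coincides with the action of the augmentation ideal of $\Z_p\llbracket H\rrbracket$ on $X_f$), this is in turn equivalent to $X_f$ being finitely generated over $\Z_p\llbracket H\rrbracket$, i.e.\ to $X_f$ satisfying the $\mathfrak{M}_H(G)$-property. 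Finally, $X[p^\infty]$ trivially satisfies the $\mathfrak{M}_H(G)$-property, so Lemma~\ref{lemma:MHG} transfers this property back and forth between $X_f$ and $X$, concluding (b).

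The main obstacle I expect lies in two places. First, the collapse of the long exact $H$-homology sequence into just four terms, which is possible only because $H \cong \Z_p$ has cohomological dimension one; in higher dimensions additional homology groups would contribute and the bookkeeping of $\mu$-invariants would no longer yield a clean inequality. Second, the passage from $\mu_{\Lambda_1}((X_f)_H) = 0$ to genuine finite generation over $\Z_p$, which relies on the fact that pseudo-null modules over the two-dimensional regular local ring $\Lambda_1$ are finite. Together, these two ingredients are precisely where the hypothesis $\dim G = 2$ is essential.
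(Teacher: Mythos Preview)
Your proof is correct and follows essentially the same route as the paper: both arguments collapse the long exact $H$-homology sequence using $\textup{cd}(H)=1$ and $H_i(H,X)=0$ for $i\ge 1$, apply Lemma~\ref{lemma:mu}(b) and the control theorem to reach the identity $\mu_\Lambda(X)=\mu_{\Lambda_1}(\textup{Pic}_Y)-\mu_{\Lambda_1}((X_f)_H)$, and finish (b) via Nakayama. The only minor variation is your treatment of $H_1(H,X_f)$: you observe it is $p$-torsion-free (as a submodule of $X_f$) and hence has $\mu_{\Lambda_1}=0$, whereas the paper embeds $X_f\cong p^mX\hookrightarrow X$ and uses the long exact sequence for $0\to X_f\to X\to X/X_f\to 0$ to show $H_1(H,X_f)=0$ outright.
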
 
\begin{proof} 
  We use the notation from the proof of Theorem~\ref{thm:MHG-iff}. As we have seen in this proof we have ${H_i(H,X) = 0}$ for each ${i \ge 1}$. Therefore the long exact sequence from the proof implies that 
  \[ \mu_{\Lambda_1}(H_i(H, X[p^\infty])) = \mu_{\Lambda_1}(H_{i+1}(H, X_f))\]
  for each ${i \in \N}$. Now Lemma~\ref{lemma:mu}(b) implies that 
  \[ \mu_\Lambda(X) = \mu_{\Lambda_1}(H_0(H, X[p^\infty])) + \sum_{i \ge 1} (-1)^i \mu_{\Lambda_1}(H_{i+1}(H, X_f)). \]
  Since the cohomological dimension of $H$ is equal to 1, the right hand side of this formula reduces to 
  $\mu_{\Lambda_1}(H_0(H, X[p^\infty]))$. By the bottom of the long exact homology sequence this is equal to 
  \[ \mu_{\Lambda_1}(H_1(H, X_f)) + \mu_{\Lambda_1}(H_0(H,X)) - \mu_{\Lambda_1}(H_0(H, X_f)). \]
  The second summand is equal to $\mu_{\Lambda_1}(\textup{Pic}_Y)$ by the control theorem Lemma~\ref{lemma:control}. Moreover, since ${X_f \cong p^m X}$ for some ${m \ge 0}$, we can view $X_f$ as a submodule of $X$. Using that the cohomological dimension of $H$ is 1, the long exact homology sequence to the short exact sequence 
  \[ 0 \longrightarrow X_f \longrightarrow X \longrightarrow X/X_f \longrightarrow 0\] 
  implies that ${H_1(H, X_f) = 0}$ (because ${H_1(H, X) = 0}$ and ${H_2(H, X/X_f) = 0}$). 

  Therefore 
  \begin{align} \label{mu-formel} \mu_\Lambda(X) = \mu_{\Lambda_1}(\textup{Pic}_Y) - \mu_{\Lambda_1}(H_0(H, X_f)). \end{align} 
  This immediately proves assertion~(a) of the corollary. 
  Note that 
  $$\mu_{\Lambda_1}(H_0(H, X_f)) = \mu_{\Lambda_1}((X_f)_H) = 0$$ 
  if and only if $(X_f)_H$ is finitely generated over $\Z_p$. Nakayama's Lemma~\ref{nakayama} implies that this property in turn is equivalent to $X_f$ being finitely generated as a $\Z_p\llbracket H\rrbracket $-module. This concludes the proof of the corollary. 
\end{proof}

\subsection{Examples}
Let $X$ be the following graph
\[ \xymatrix{&&&&\\ & x_1 \ar@{-}[rr] \ar@{-}@/^1.2pc/[rr] \ar@{-}@/^0.7pc/[rr] \ar@{-}@/^2pc/[rr] & & x_2 \ar@{-}[rd] & \\ 
x_n \ar@{-}[ur] & & & & x_3 \ar@{-}[ld] \\ 
& x_5 \ar@{-}[rr] \ar@{-}[ul]^{\ldots} & & x_4 &} \]
In other words, we let ${X = X_0}$ be a graph with $n$ vertices ${x_1, \ldots, x_n}$, which are connected in a cycle, with one multiple edge, say, between $x_1$ and $x_2$. 
Let ${G=\Z_p^l\rtimes \Z_p}$. Let $\tau_1,\dots ,\tau_l$ be topological generators of $\Z_p^l$ and let $\tau_{l+1}$ be a topological generator of the factor $\Z_p$ such that $\tau_1,\dots,\tau_l,\tau_{l+1}$ are topological generators of $G$. Assume that we have at least $l+1$ edges between $x_1$ and $x_2$, i.e. the degree of $x_1$ is at least $l+2$. Let $\{e_1,\dots, e_{a-1}\}$ be the edges between $x_1$ and $x_2$. 

\begin{example} 
Let $H=\Z_p^l$. Fix a voltage assignment ${\alpha \colon S \longrightarrow G}$ with the property $\alpha(e_i)=\tau_i$ for $1\le i \le l+1$ and $\alpha(e_i)\in H$ for $l+2\le i\le a-1$. By Lemma~\ref{connected} each of the derived graphs $X_n$ is connected. Note that $\alpha$ induces a unique $\Z_p$-voltage assignment 
\[\alpha'\colon S\longrightarrow G\longrightarrow G/H\cong \Z_p,\]
which sends $\tau_1,\dots, \tau_l$ to the trivial element. Let $Y/X$ be the $\Z_p$-cover corresponding to $\alpha'$.  We want to find integers $a$ and $n$ such that $X_\infty$ satisfies the $\mathfrak{M}_H(G)$-property. By Corollary~\ref{cor:mhg} it suffices to show that $\mu(\textup{Pic}_Y)=0$. In \cite[Example~8.4]{Kleine-Mueller4} (we apply this example with the choice ${b = 1}$) we have shown that the characteristic ideal of $\textup{Pic}_Y$ is generated by
\[T^2((a-2)n+a-1).\]

Thus, the $\mathfrak{M}_H(G)$-property holds whenever $p$ does not divide $(a-2)n+a-1$. In particular, it holds if $p\mid (a-2)$. 
\end{example} 

We conclude with an example where we can prove that the $\mathfrak{M}_H(G)$-property holds in a ${\mu \ne 0}$-setting. 
\begin{example} 
  Suppose that ${G \cong \Z_p \rtimes \Z_p}$ is generated topologically by two elements $\sigma$ and $\tau$ and that $p>2$. Assume that $n=2$ and assume that we have $3p$ edges between $x_1$ and $x_2$. Suppose that we are given a voltage assignment ${\alpha \colon S \longrightarrow G}$ such that 
  \[ \alpha(e_1) = \ldots = \alpha(e_p) = \sigma, \quad \alpha(e_{p+1}) = \ldots = \alpha(e_{2p}) = \tau,\quad \alpha(e_{2p+1})=\ldots =\alpha(e_{3p})=1. \] Now suppose that ${H = \langle \sigma\rangle}$.
  By Lemma~\ref{connected} each of the derived graphs $X_n$ is connected. 
 The natural projection ${G \twoheadrightarrow G/H}$ induces a surjection ${\Lambda \twoheadrightarrow \Lambda_1}$, where we write ${\Lambda_1 := \Z_p\llbracket G/H\rrbracket \cong \Z_p\llbracket T\rrbracket }$.  Then the number of edges $e_i$ between $x_1$ and $x_2$ which are mapped to 1 under the composite voltage assignment 
  \[ \alpha' \colon S \longrightarrow G \twoheadrightarrow G/H\] 
  is equal to $2p$, and the remaining edges are mapped to a topological generator $\gamma$ of $G/H$, which we identify with ${T+1 \in \Lambda_1}$.  

  For the $\Z_p$-cover $Y$ with voltage assignment $\alpha'$, we obtain by the definitions the matrix 
  \[ D - A_{\alpha'} = \begin{pmatrix} 3p & -p \gamma - 2p\\ - p \gamma^{-1} - 2p& 3p \end{pmatrix}, \]
  where we recall that $\gamma$ denotes the image of $\tau$ under the canonical projection ${G \twoheadrightarrow G/H}$ (i.e. $\gamma$ is a topological generator of ${G/H \cong \Z_p}$). The determinant of this matrix is 
  \[ \det(\Delta_\infty) = 9p^2 - p^2 - 2p^2 \gamma^{-1} - 2p^2 \gamma - 4p^2 = 2p^2 (2 - \gamma - \gamma^{-1}). \]
  Recalling $T=\gamma-1$, this power series is associated to \[2p^2(2(T+1)-(T+1)^2-1)=-2p^2T^2. \]
 As $p\neq 2$, we may conclude that ${\mu_{\Lambda_1}(\textup{Pic}_Y) = 2}$. 

  In particular, it follows from Corollary~\ref{cor:MHG-iff}(a) that ${\mu_\Lambda(\textup{Pic}_\infty) \le 2}$. It therefore remains to prove that the $\mu$-invariant of $\textup{Pic}_\infty$ is at least 2. The corresponding matrix is 
  \[ D - A_\alpha = \begin{pmatrix} 3p & -p \tau - p \sigma-p \\ -p \tau^{-1} - p \sigma^{-1}-p & 2p \end{pmatrix} = p \cdot \begin{pmatrix} 3 & - \tau - \sigma -1\\ - \tau^{-1} - \sigma^{-1}-1 & 3 \end{pmatrix}.\] 

Recall that we have a presentation
\[ 0 \longrightarrow \Lambda^2 \stackrel{h}{\longrightarrow} \Lambda^2 \longrightarrow \textup{Pic}_\infty \longrightarrow 0, \] 
  and that $h$ is given by the matrix $D-A_\alpha^t$. As each entry of this matrix is divisible by $p$ we have a natural surjection
  \[\textup{Pic}_\infty \twoheadrightarrow \Lambda^2/p.\]
  The $\mu$-invariant of $\Lambda^2/p$ is clearly equal to $2$. As $\textup{Pic}_\infty$ is $\Lambda$-torsion, it follows from the additivity of $\mu$-invariants on short exact sequences of torsion $\Lambda$-modules that ${\mu_\Lambda(\textup{Pic}_\infty)\ge 2}$. Thus, ${2=\mu_\Lambda(\textup{Pic}_\infty)=\mu_{\Lambda_1}(\textup{Pic}_Y)}$. Then Corollary~\ref{cor:MHG-iff} implies that the $\mathfrak{M}_H(G)$-property holds for $\textup{Pic}_\infty$.
\end{example} 

\bibliography{references} 
\bibliographystyle{plain}

\end{document}